\documentclass[12pt,numbers]{elsarticle}
\journal{}

\makeatletter
\def\ps@pprintTitle{%
 \let\@oddhead\@empty
 \let\@evenhead\@empty
 \def\@oddfoot{\hfill\thepage}%
 \let\@evenfoot\@oddfoot}
\makeatother

\usepackage{amsmath,amssymb,amsfonts,amsthm,graphicx}
\usepackage[bookmarksnumbered,colorlinks=true]{hyperref}
\usepackage[labelfont=bf]{caption}

\providecommand{\doi}[1]{\href{https://doi.org/#1}{DOI:#1}}
\usepackage{xurl} 
\renewcommand{\doi}[1]{%
 \href{https://doi.org/#1}{\nolinkurl{DOI:#1}}%
}

\usepackage{dsfont} 
\usepackage{enumitem} 
\usepackage{mathtools} 
\usepackage{appendix} 
\usepackage{geometry} 
\usepackage{subdepth}

\numberwithin{equation}{section}

\theoremstyle{plain}
\newtheorem{theorem}{Theorem}[section]

\newtheorem{lemma}[theorem]{Lemma}

\newcommand{\N}{\mathbb{N}}
\newcommand{\R}{\mathbb{R}}
\newcommand{\PP}{\mathsf{P}}
\newcommand{\EE}{\mathsf{E}}

\newcommand{\bb}[1]{\boldsymbol{#1}}
\newcommand{\ii}{\mathrm{i}}
\newcommand{\rd}{\mathrm{d}}

\newcommand{\leqdef}{\vcentcolon=}
\newcommand{\reqdef}{=\vcentcolon}
\newcommand{\ind}{\mathds{1}}

\makeatletter\newcommand{\setword}[2]{\phantomsection #1\def\@currentlabel{#1}\label{#2}}\makeatother

\allowdisplaybreaks

\begin{document}

\begin{frontmatter}

\title{\textbf{Strict Total Positivity of a Gauss Hypergeometric Kernel, and the Sharp Threshold for the One-Dimensional \\ \texorpdfstring{$\boldsymbol{SL(2,\R)}$}{SL2R} Conformal Block}}

\author[a1]{Fr\'ed\'eric Ouimet}\ead{frederic.ouimet2@uqtr.ca}
\author[a2]{Donald Richards}\ead{richards@stat.psu.edu}

\address[a1]{Universit\'e du Qu\'ebec \`a Trois-Rivi\`eres, Trois-Rivi\`eres, QC G8Z 4M3, Canada}
\address[a2]{Department of Statistics, Penn State University, University Park, PA 16802, USA\vspace{-5mm}}

\begin{abstract}
In this paper, we completely resolve the two total positivity problems raised by Li [J. High Energy Phys., 2023(7):Paper No.~167, 44 pp., 2023], thereby determining the precise scope of this positivity structure in the one-dimensional conformal bootstrap. First, the Gauss hypergeometric kernel $\mathcal{F}(\Delta,z) = {}_2F_1(\Delta,\Delta;2\Delta;z)$ is proved to be strictly totally positive of order infinity for $\Delta > 0$ and $z \in (0,1)$. Second, the sharp lower $\Delta$-parameter threshold for the associated one-dimensional $SL(2,\R)$ conformal block $G_{\Delta}(z) = z^{\Delta}\mathcal{F}(\Delta,z)$ is shown to be $1/2$: the conformal-block kernel $G_{\Delta}(z)$ is strictly totally positive of order infinity for $\Delta \geq 1/2$. For every $\tau \in (0,1/2)$, there exists a strictly negative odd-order minor of $G_{\Delta}(z)$ such that all its $\Delta$-values are in $(\tau,1/2)$ and all its $z$-values can be chosen arbitrarily close to one. Consequently, no restriction $z > z_0$ with $z_0 < 1$ can restore total positivity of order infinity over all $\Delta > 0$.
\end{abstract}

\begin{keyword} 
Conformal block, conformal bootstrap, Gauss' hypergeometric function, Herglotz function, Legendre function, special linear group, total positivity, Wronskian
\MSC[2020]{Primary: 33C05; Secondary: 05E05, 15A15, 15B48, 33C80, 60E05}
\end{keyword}

\end{frontmatter}

\section{Introduction}\label{sec:intro}

Many physical systems display the same patterns when observed at very different length scales. This behavior appears near continuous phase transitions, such as those in fluids and magnets, and in quantum field theory, the mathematical framework used to describe fundamental particles. Conformal field theory provides a common mathematical framework for these systems. The conformal bootstrap tests which of these models can exist by imposing basic consistency requirements, including that a quantity calculated in two different but equivalent ways must have the same value; see, e.g., \citet{PolandRychkovVichi2019}. For example, the conformal bootstrap has produced precise estimates for the three-dimensional Ising model, a basic model of how a magnet behaves near the temperature at which it loses its magnetism; see \citet{KosEtAl2016}.

While seeking a mathematical explanation for strong numerical bounds in one-dimensional conformal field theories, \citet{Li2023} was led to study the total positivity of the hypergeometric and conformal-block kernels denoted throughout this paper by
\[
\mathcal{F}(\Delta,z) \leqdef {}_2F_1(\Delta,\Delta;2\Delta;z), \qquad G_{\Delta}(z) \leqdef z^{\Delta}\mathcal{F}(\Delta,z), \qquad \Delta > 0, \quad 0 < z < 1,
\]
where ${}_2F_1(a,b;c;z)$ denotes the Gauss hypergeometric function defined in \eqref{eq:gaussian.hypergeometric.function} below.

Let $\N \leqdef \{1,2,\ldots\}$. Let $I,J \subseteq \R$ be intervals and let $K:I \times J \to \R$. For $n \in \N$, the kernel $K$ is \emph{totally positive of order $n$} (denoted $\mathrm{TP}_n$) if
\begin{equation}\label{eq:tp.n}
\det\!\big[K(x_i,y_j)\big]_{i,j=1}^m \geq 0
\end{equation}
for every $m \in \{1,\ldots,n\}$, $x_1 < \cdots < x_m$ in $I$, and $y_1 < \cdots < y_m$ in $J$. The kernel $K$ is strictly totally positive of order $n$ ($\mathrm{STP}_n$) if all the determinants in \eqref{eq:tp.n} are strictly positive. The kernel $K$ is $\mathrm{TP}_{\infty}$ (respectively, $\mathrm{STP}_{\infty}$) if it is $\mathrm{TP}_n$ (respectively, $\mathrm{STP}_n$) for every $n \in \N$.

Based on numerical and asymptotic evidence, \citet[Section~3.2, pp.~14--15]{Li2023} conjectured, in the terminology used here, that $\mathcal{F}(\Delta,z)$ is $\mathrm{STP}_{\infty}$ on $(0,\infty) \times (0,1)$. The question for $G_{\Delta}(z)$ is more delicate: although $z^{\Delta}$ is strictly positive, its dependence on both variables suggests that it could change the signs of the determinants. Indeed, \citet[Eqs.~(3.28)--(3.29), p.~19]{Li2023} reported a strictly negative $3 \times 3$ determinant with magnitude about $10^{-5654}$. Such a small violation of positivity is invisible at ordinary numerical precision and shows why an exact analysis is needed. Li's work therefore left open two questions: whether $\mathcal{F}(\Delta,z)$ is $\mathrm{STP}_{\infty}$ and what the sharp lower $\Delta$-parameter threshold is for the total positivity of $G_{\Delta}(z)$.

Both questions are settled in the present article. First, the Gauss hypergeometric kernel $\mathcal{F}(\Delta,z)$ is proved to be $\mathrm{STP}_{\infty}$ on $(0,\infty) \times (0,1)$. Second, the sharp lower $\Delta$-parameter threshold for the full conformal block $G_{\Delta}(z)$ is shown to be $1/2$: for every $\tau > 0$, the kernel $G_{\Delta}(z)$ is $\mathrm{STP}_{\infty}$ on $[\tau,\infty) \times (0,1)$ if and only if $\tau \geq 1/2$. The sharpness statement is stronger in that, whenever $\tau < 1/2$, there exist strictly negative odd-order minors with all their $\Delta$-values in $(\tau,1/2)$ and all their $z$-values arbitrarily close to one. Thus, there is no corresponding lower threshold $z_0 < 1$ that can restore $G_{\Delta}(z)$ to a kernel that is $\mathrm{TP}_{\infty}$ over the full range $\Delta > 0$.

In proving that the hypergeometric factor $\mathcal{F}(\Delta,z)$ is $\mathrm{STP}_{\infty}$, we transform Euler's integral representation for the Gauss hypergeometric function into an expectation involving a family of random variables whose probability density functions can be written as compositions of elementary kernels that are $\mathrm{STP}_{\infty}$.

For the full $SL(2,\R)$ conformal block $G_{\Delta}(z)$, an exact representation in terms of a Legendre function of the second kind leads, by means of an induction based on Wronskians, to the $\mathrm{STP}_{\infty}$ property on the region $\{(\Delta,z) \in [1/2,\infty) \times (0,1)\}$. Sharpness is deduced from the zero-balanced expansion of Gauss' hypergeometric function near $z = 1$ together with a moment argument that establishes the existence of a strictly negative odd-order minor below the threshold.

The rest of the paper is organized as follows. Section~\ref{sec:definitions} introduces the remaining special-function notation used throughout the paper. Section~\ref{sec:general.tools} collects the scaling, change-of-variable, and composition principles for total positivity needed in the proofs. Section~\ref{sec:TP.F} proves the strict total positivity of the Gauss hypergeometric kernel $\mathcal{F}(\Delta,z)$ through a probabilistic integral representation. Section~\ref{sec:TP.G} establishes the sharp threshold for the conformal-block kernel $G_{\Delta}(z)$, treating sufficiency and sharpness separately. Appendix~\ref{app:Wronskian.tools} supplies the Legendre-function and Wronskian arguments used to establish sufficiency for the threshold $\tau \geq 1/2$, while Appendix~\ref{app:boundary.calculations} contains the expansion, moment, and determinant calculations used to prove its sharpness for $\tau < 1/2$.

\section{Definitions and notation}\label{sec:definitions}

Throughout the paper, we use the standard notations for the classical special functions and related quantities; cf.~\citet{AskeyRoy2010} or \citet{Daalhuis2010}. Thus, for $a,b > 0$, the classical gamma and beta functions are defined, respectively, by
\[
\Gamma(a) \leqdef \int_0^{\infty} x^{a-1}e^{-x} \, \rd x, \qquad B(a,b) \leqdef \frac{\Gamma(a)\Gamma(b)}{\Gamma(a + b)}, \qquad a,b > 0.
\]

For $a > 0$, a random variable $X$ is said to have a \textit{gamma distribution with index parameter} $a$, denoted $X \sim \operatorname{Gamma}(a)$, if the probability density function of $X$ is
\[
f_a(x) = \frac{1}{\Gamma(a)} x^{a-1} e^{-x}, \qquad x > 0.
\]
Similarly, a random variable $X$ is said to have a \textit{beta distribution with index parameter} $a$, denoted $X \sim \operatorname{Beta}(a,1)$, if the probability density function of $X$ is
\[
b_a(x) = \frac{1}{B(a,1)} x^{a-1} = a x^{a-1}, \qquad 0 < x < 1.
\]

Euler's constant is
\[
\gamma \leqdef \lim_{n \to \infty} \bigg[\sum_{j=1}^n j^{-1} - \log n\bigg] = 0.57721\ldots,
\]
and the digamma function is denoted by
\[
\psi(a) \leqdef \frac{\rd}{\rd a} \log\Gamma(a) = \frac{\Gamma'(a)}{\Gamma(a)}, \qquad a > 0.
\]

Let $\N_0 \leqdef \N \cup \{0\}$. For $a \in \R$, the rising factorial is defined as
\[
(a)_0 \leqdef 1, \qquad (a)_k \leqdef a(a + 1)\cdots(a + k - 1), \qquad k \in \N.
\]
For $a,b \in \R$, $c \in \R\setminus\{0,-1,-2,\ldots\}$, and $|z| < 1$, the Gauss hypergeometric function is
\begin{equation}
\label{eq:gaussian.hypergeometric.function}
{}_2F_1(a,b;c;z) \leqdef \sum_{k=0}^{\infty}\frac{(a)_k(b)_k}{(c)_k}\frac{z^k}{k!}.
\end{equation}

\section{General tools for total positivity}\label{sec:general.tools}

The following two invariance principles and the continuous Cauchy--Binet formula will be used repeatedly throughout the paper.

\begin{lemma}[Scaling by strictly positive functions]\label{lem:scaling.functions}
Let $n \in \N \cup \{\infty\}$, let $I,J \subseteq \R$ be intervals, let $K:I \times J \to \R$ be a kernel, and let $f:I \to (0,\infty)$ and $g:J \to (0,\infty)$. Define
\[
\widetilde{K}(x,y) \leqdef f(x) g(y) K(x,y), \qquad x \in I, \quad y \in J.
\]
Then, for every $m \in \N$, $x_1 < \cdots < x_m$ in $I$, and $y_1 < \cdots < y_m$ in $J$,
\begin{equation}\label{eq:positive.scaling.minor}
\det\!\big[\widetilde{K}(x_i,y_j)\big]_{i,j=1}^m = \left(\prod_{i=1}^m f(x_i)\right)\left(\prod_{j=1}^m g(y_j)\right)\det\!\big[K(x_i,y_j)\big]_{i,j=1}^m.
\end{equation}
The two determinants in \eqref{eq:positive.scaling.minor} therefore have the same sign. Consequently, $K$ is $\mathrm{TP}_n$ (respectively, $\mathrm{STP}_n$) if and only if $\widetilde{K}$ is $\mathrm{TP}_n$ (respectively, $\mathrm{STP}_n$).
\end{lemma}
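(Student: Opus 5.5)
The identity \eqref{eq:positive.scaling.minor} follows from the multilinearity of the determinant, applied once to the rows and once to the columns. Fix $m \in \N$, points $x_1 < \cdots < x_m$ in $I$, and points $y_1 < \cdots < y_m$ in $J$. The matrix $\big[\widetilde{K}(x_i,y_j)\big]_{i,j=1}^m$ factors as the product $D_f \, \big[K(x_i,y_j)\big]_{i,j=1}^m \, D_g$, where $D_f \leqdef \operatorname{diag}(f(x_1),\ldots,f(x_m))$ and $D_g \leqdef \operatorname{diag}(g(y_1),\ldots,g(y_m))$. Since the determinant is multiplicative and the determinant of a diagonal matrix is the product of its diagonal entries, we obtain $\det D_f = \prod_{i=1}^m f(x_i)$ and $\det D_g = \prod_{j=1}^m g(y_j)$. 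Multiplying these together with $\det\!\big[K(x_i,y_j)\big]_{i,j=1}^m$ gives \eqref{eq:positive.scaling.minor}. Equivalently, one can pull the factor $f(x_i)$ out of row $i$ and the factor $g(y_j)$ out of column $j$.

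For the sign statement, observe that $f$ and $g$ take values in $(0,\infty)$. Hence the prefactor $c \leqdef \prod_{i} f(x_i)\prod_{j} g(y_j)$ is strictly positive. It follows that the two determinants are either both positive, both zero, or both negative. In particular, one is $\geq 0$ (respectively, $>0$) if and only if the other is.

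The equivalence of total positivity follows by quantifying over all admissible configurations. Take $n \in \N$. If $K$ is $\mathrm{TP}_n$, then for every $m \le n$ and every choice of ordered points, the minor of $K$ is $\geq 0$, so by the sign statement the corresponding minor of $\widetilde{K}$ is also $\geq 0$. Thus $\widetilde{K}$ is $\mathrm{TP}_n$. The same argument works with strict inequalities, giving the corresponding statement for $\mathrm{STP}_n$.

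For the converse, apply the same reasoning with the roles of $K$ and $\widetilde{K}$ exchanged. This is legitimate because $K(x,y) = (1/f(x))(1/g(y))\widetilde{K}(x,y)$, and $1/f$ and $1/g$ are again strictly positive functions. Finally, the case $n = \infty$ follows by applying the finite case to every $n \in \N$.

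No step here is a genuine obstacle. The only point requiring care is to record that the prefactor is strictly positive rather than merely nonnegative, since that is what makes the equivalence hold in the strict ($\mathrm{STP}$) case.
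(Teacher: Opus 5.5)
Your proof is correct: factoring the matrix as $D_f\,[K(x_i,y_j)]\,D_g$, with diagonal factors of strictly positive determinant, is exactly the standard argument behind Karlin's Theorem~2.1(a), which the paper simply cites rather than writing out. The converse step via $1/f$ and $1/g$ is harmless but redundant, since the sign equivalence you already established (both positive, both zero, or both negative) is symmetric in $K$ and $\widetilde{K}$.
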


\begin{proof}
This is \citet[Chap.~1, \S2, Thm.~2.1(a), pp.~17--18]{Karlin1968} with strictly positive factors.
\end{proof}

\begin{lemma}[Strictly increasing changes of variables]\label{lem:increasing.transformation}
Let $n \in \N \cup \{\infty\}$, let $I,J,\widetilde{I},\widetilde{J} \subseteq \R$ be intervals, let $K:I \times J \to \R$ be a kernel, and let $\phi:\widetilde{I} \to I$ and $\psi:\widetilde{J} \to J$ be continuous and strictly increasing functions. Define
\[
\widetilde{K}(x,y) \leqdef K(\phi(x),\psi(y)), \qquad x \in \widetilde{I}, \quad y \in \widetilde{J}.
\]
Then, for every $m \in \N$, $x_1 < \cdots < x_m$ in $\widetilde{I}$, and $y_1 < \cdots < y_m$ in $\widetilde{J}$,
\begin{equation}\label{eq:increasing.change.minor}
\det\!\big[\widetilde{K}(x_i,y_j)\big]_{i,j=1}^m = \det\!\big[K(\phi(x_i),\psi(y_j))\big]_{i,j=1}^m.
\end{equation}
Thus, the two determinants in \eqref{eq:increasing.change.minor} have the same sign. Consequently, $K$ is $\mathrm{TP}_n$ (respectively, $\mathrm{STP}_n$) on $\phi(\widetilde{I}) \times \psi(\widetilde{J})$ if and only if $\widetilde{K}$ is $\mathrm{TP}_n$ (respectively, $\mathrm{STP}_n$) on $\widetilde{I} \times \widetilde{J}$.
\end{lemma}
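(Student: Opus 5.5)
The identity \eqref{eq:increasing.change.minor} holds by definition: the $(i,j)$ entry of the left-hand matrix is $\widetilde{K}(x_i,y_j)$, which by the definition of $\widetilde{K}$ equals $K(\phi(x_i),\psi(y_j))$. The two matrices therefore agree entrywise, and so do their determinants. The real content of the lemma is the equivalence of the $\mathrm{TP}_n$ and $\mathrm{STP}_n$ properties. For that, I will use that strictly increasing maps preserve strict orderings and that $\phi$ and $\psi$ are bijections onto their images.

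The plan for the forward direction is as follows. Suppose $K$ is $\mathrm{TP}_n$ (respectively, $\mathrm{STP}_n$) on $\phi(\widetilde{I}) \times \psi(\widetilde{J})$. Fix $m \leq n$, points $x_1 < \cdots < x_m$ in $\widetilde{I}$, and points $y_1 < \cdots < y_m$ in $\widetilde{J}$. Because $\phi$ and $\psi$ are strictly increasing, $\phi(x_1) < \cdots < \phi(x_m)$ and $\psi(y_1) < \cdots < \psi(y_m)$, and these points lie in $\phi(\widetilde{I})$ and $\psi(\widetilde{J})$. The right-hand side of \eqref{eq:increasing.change.minor} is then an admissible minor of $K$, so it is nonnegative (respectively, strictly positive). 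The left-hand side is equal to it.

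For the converse, suppose $\widetilde{K}$ is $\mathrm{TP}_n$ (respectively, $\mathrm{STP}_n$). Take $u_1 < \cdots < u_m$ in $\phi(\widetilde{I})$ and $v_1 < \cdots < v_m$ in $\psi(\widetilde{J})$. Strict monotonicity makes $\phi$ injective, so each $u_i = \phi(x_i)$ for a unique $x_i \in \widetilde{I}$. Since the inverse of a strictly increasing function is also strictly increasing, $x_1 < \cdots < x_m$. The same argument gives unique $y_j \in \widetilde{J}$ with $v_j = \psi(y_j)$ and $y_1 < \cdots < y_m$. Then $\det[K(u_i,v_j)] = \det[\widetilde{K}(x_i,y_j)]$, which has the required sign.

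Continuity is not needed for the determinant argument. Its role is to guarantee that $\phi(\widetilde{I})$ and $\psi(\widetilde{J})$ are intervals, by the intermediate value theorem, so the statement stays within the framework of kernels on intervals. There is no real obstacle here. The one point that needs care is the surjectivity onto the image used in the converse, and that is automatic because we only claim the property on $\phi(\widetilde{I}) \times \psi(\widetilde{J})$. This also matches \citet[Chap.~1, \S2]{Karlin1968}.
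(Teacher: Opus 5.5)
Your proposal is correct and is essentially the paper's approach: the paper proves the lemma by citing Karlin's Theorem~2.1(b), applied to the restriction of $K$ to $\phi(\widetilde{I}) \times \psi(\widetilde{J})$. Your direct argument, entrywise equality of the matrices plus preservation of strict ordering under $\phi,\psi$ and their inverses on the images, is exactly the content of that citation.
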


\begin{proof}
This result follows from \citet[pp.~17--18, Theorem~2.1(b)]{Karlin1968}, applied to the restriction of $K$ to $\phi(\widetilde{I}) \times \psi(\widetilde{J})$.
\end{proof}

\begin{lemma}[The continuous Cauchy--Binet formula]\label{lem:continuous.Cauchy.Binet}
Let $K_1,K_2:\R^2 \to \R$ be measurable functions, and define the composition kernel
\[
M(x,y) \leqdef \int_{\R} K_1(x,t) K_2(t,y) \, \rd t, \qquad x,y \in \R.
\]
Fix $x_1,\ldots,x_n$ and $y_1,\ldots,y_n$, and suppose that the integrals defining $M(x_i,y_j)$ and the determinant integral below converge absolutely. Then
\begin{equation}\label{eq:continuous.Cauchy.Binet}
\det\!\big[M(x_i,y_j)\big]_{i,j=1}^n = \operatornamewithlimits\idotsint_{t_1 < \cdots < t_n}\det\!\big[K_1(x_i,t_k)\big]_{i,k=1}^n \, \det\!\big[K_2(t_k,y_j)\big]_{k,j=1}^n \prod_{k=1}^n \, \rd t_k.
\end{equation}
In particular, the composition of two $\mathrm{TP}_n$ (respectively, $\mathrm{STP}_n$) kernels is $\mathrm{TP}_n$ (respectively, $\mathrm{STP}_n$) whenever the integral in \eqref{eq:continuous.Cauchy.Binet} is finite.
\end{lemma}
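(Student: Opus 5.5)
The proof of the continuous Cauchy--Binet formula \eqref{eq:continuous.Cauchy.Binet} begins by expanding the left-hand side with the Leibniz formula. Writing each entry as an integral, we have
\[
\det\!\big[M(x_i,y_j)\big]_{i,j=1}^n = \sum_{\sigma \in S_n} \operatorname{sgn}(\sigma) \prod_{i=1}^n \int_{\R} K_1(x_i,t_i) K_2(t_i,y_{\sigma(i)}) \, \rd t_i .
\]
By the assumed absolute convergence and Fubini's theorem, this equals an integral over $\R^n$:
\[
\int_{\R^n} \prod_{i=1}^n K_1(x_i,t_i) \, \det\!\big[K_2(t_i,y_j)\big]_{i,j=1}^n \prod_{i=1}^n \rd t_i .
\]
Here the sum over $\sigma$ has been recombined into a determinant in the second kernel only.

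Next we split $\R^n$, up to the null set where two coordinates coincide, into the $n!$ chambers $\{t_{\pi(1)} < \cdots < t_{\pi(n)}\}$ with $\pi \in S_n$. On each chamber we relabel the variables so that they are increasing. Permuting the rows of $\det[K_2(t_i,y_j)]$ produces the factor $\operatorname{sgn}(\pi)$. Summing over $\pi$ then gathers the products $\prod_i K_1(x_i,t_{\pi(i)})$, weighted by these signs, into $\det[K_1(x_i,t_k)]_{i,k=1}^n$. This yields exactly \eqref{eq:continuous.Cauchy.Binet}.

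The only delicate step is justifying Fubini and the rearrangement. I would handle this through the absolute integrability hypotheses. For the chamber decomposition, each integrand is bounded by a product of the absolutely integrable factors $|K_1(x_i,t)K_2(t,y_j)|$, so integrability holds. The diagonals have Lebesgue measure zero, so they contribute nothing.

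For the consequence, take $x_1<\cdots<x_n$ and $y_1<\cdots<y_n$. If $K_1$ and $K_2$ are $\mathrm{TP}_n$, then both determinants in the integrand are nonnegative on the ordered region $t_1<\cdots<t_n$, so the integral is nonnegative. The same argument gives $\mathrm{TP}_m$ for every $m\le n$.

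In the strict case, the integrand is strictly positive on the open set $\{t_1<\cdots<t_n\}$, which has positive measure. A strictly positive measurable function has strictly positive integral over a set of positive measure, so the determinant is strictly positive whenever the integral is finite. If the kernels are only defined on subintervals, we extend them by zero outside those intervals. The ordered region inside the product of those intervals still has positive measure, so the argument is unchanged.
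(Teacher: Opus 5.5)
Your proof is correct and follows the same route as the paper. The paper just cites Karlin's basic composition formula without proof, and your argument (Leibniz expansion, then Fubini, then antisymmetrization over the $n!$ ordering chambers) is the standard proof of that formula. Fubini is justified by the absolute convergence of each $M(x_i,y_j)$. That hypothesis also makes the final integrand integrable, since it is bounded by a finite sum of such products rather than a single product. The positivity consequences then follow exactly as you describe.
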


\begin{proof}
This is the basic composition formula of \citet[Eq.~(2.5), p.~17]{Karlin1968}.
\end{proof}

\section{Total positivity of the Gauss hypergeometric kernel}\label{sec:TP.F}

We begin by establishing our first main result, which resolves the first problem posed by Li.

\begin{theorem}\label{thm:strict.total.positivity}
The kernel $\mathcal{F}(\Delta,z) = {}_2F_1(\Delta,\Delta;2\Delta;z)$, $\Delta > 0$, $0 < z < 1$, is $\mathrm{STP}_{\infty}$.
\end{theorem}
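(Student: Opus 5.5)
The plan is to realize $\mathcal{F}(\Delta,z)$ as a composition of elementary $\mathrm{STP}_\infty$ kernels, with $\Delta$ entering only through the first kernel and $z$ only through the last, and then conclude with the continuous Cauchy--Binet formula (Lemma~\ref{lem:continuous.Cauchy.Binet}).

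The starting point is Euler's integral,
\[
\mathcal{F}(\Delta,z)=\frac{1}{B(\Delta,\Delta)}\int_0^1 t^{\Delta-1}(1-t)^{\Delta-1}(1-zt)^{-\Delta}\,\rd t .
\]
Using $(1-s)^{-\Delta}=\EE[e^{sG}]$ for $G\sim\operatorname{Gamma}(\Delta)$ and $s<1$, this gives
\[
\mathcal{F}(\Delta,z)=\EE\big[e^{zX_\Delta}\big],\qquad X_\Delta=TG,
\]
where $T\sim\mathrm{Beta}(\Delta,\Delta)$ and $G\sim\operatorname{Gamma}(\Delta)$ are independent. Writing $p_\Delta$ for the density of $X_\Delta$, we obtain
\[
\mathcal{F}(\Delta,z)=\int_0^\infty p_\Delta(x)\,e^{zx}\,\rd x .
\]
The kernel $e^{zx}$ is $\mathrm{STP}_\infty$ in $(x,z)$, because it is the exponential kernel $e^{xy}$. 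Hence, by Lemma~\ref{lem:continuous.Cauchy.Binet}, it suffices to show that $p_\Delta(x)$ is $\mathrm{TP}_\infty$ in $(\Delta,x)$. Strictness then follows from \eqref{eq:continuous.Cauchy.Binet}: the integrand is nonnegative and, on a set of positive measure, strictly positive. Absolute convergence holds because $X_\Delta$ has a gamma-type tail $e^{-x}$ and $0<z<1$.

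The main obstacle is that $X_\Delta$ is a product of two independent variables whose laws both depend on $\Delta$. A convolution of two $\Delta$-dependent families is not, on its face, a composition of kernels, so I must re-express $X_\Delta$ so that $\Delta$ enters only once.

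My first attempt will use beta--gamma algebra. One option is to absorb one $\Delta$-dependence through a distributional identity, such as $\mathrm{Beta}(a,b)\cdot\operatorname{Gamma}(a+b)\overset{d}{=}\operatorname{Gamma}(a)$. Another option is to represent the beta factor through $\mathrm{Beta}(a,1)$ variables, as the notation of Section~\ref{sec:definitions} suggests, since their densities $a x^{a-1}$ are $\mathrm{STP}_\infty$ in $(a,\log x)$ after Lemma~\ref{lem:scaling.functions}. The aim is a chain of the form
\[
\Delta\ \longrightarrow\ S\ \longrightarrow\ X ,
\]
in which the law of $S$ given $\Delta$ is an exponential family in $\log S$, and the conditional law of $X$ given $S$ is a scale mixture that does not involve $\Delta$. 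Each link is then $\mathrm{TP}_\infty$:
\begin{itemize}
\item[(i)] Exponential-family densities $c(\Delta)h(y)e^{\Delta y}$ are $\mathrm{STP}_\infty$ by Lemma~\ref{lem:scaling.functions}.
\item[(ii)] Scale kernels $x\mapsto q(x/s)/s$ built from gamma densities are $\mathrm{TP}_\infty$ after the change of variables $x=e^u$ (Lemma~\ref{lem:increasing.transformation}), because $e^{\Delta w-e^{w}}$ is a P\'olya frequency function.
\end{itemize}
Iterating Lemma~\ref{lem:continuous.Cauchy.Binet} along the chain would then give the $\mathrm{TP}_\infty$ property of $p_\Delta$.

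If this route fails, the fallback is the power series $\mathcal{F}(\Delta,z)=\sum_{k}c_k(\Delta)z^k$ with
\[
c_k(\Delta)=\frac{(\Delta)_k^2}{(2\Delta)_k\,k!}.
\]
Here I would use the discrete Cauchy--Binet formula together with the $\mathrm{STP}_\infty$ Vandermonde-type kernel $z^k$, reducing the theorem to showing that $c_k(\Delta)$ is $\mathrm{TP}_\infty$ in $(\Delta,k)$. To do so, I would express $c_k(\Delta)$ as a moment $\int s^k\,\mu_\Delta(\rd s)$ of a $\Delta$-family $\mu_\Delta$ that is $\mathrm{TP}_\infty$ in $(\Delta,\log s)$; this is the same core difficulty in another form.
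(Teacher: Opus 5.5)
\paragraph{Gap: the key claim is unproven, and the proposed route cannot work.} Your reduction is correct as far as it goes. The identity $\mathcal{F}(\Delta,z)=\EE[e^{zTG}]$ holds, and $\mathrm{TP}_\infty$ of the density $p_\Delta$ of $X_\Delta=TG$ would give the theorem through the exponential kernel. But that claim is the entire content of the proof, and you leave it unproven. It is also at least as strong as the theorem, since $\mathrm{TP}$ of $\int p_\Delta(x)e^{zx}\,\rd x$ does not in general force $\mathrm{TP}$ of $p_\Delta$. So it is not clear that it even holds.

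More seriously, the chain you propose cannot exist for this $X_\Delta$. Suppose $X_\Delta\stackrel{d}{=}SR$, where $\log S$ has density $c(\Delta)h(u)e^{\Delta u}$ and $R$ is independent of $S$ with a $\Delta$-free law; any finite chain of $\Delta$-free scale kernels reduces to this form. Then $\EE[X_\Delta^w]=\{c(\Delta)/c(\Delta+w)\}\EE[R^w]$, so $\partial_\Delta\partial_w\log\EE[X_\Delta^w]=-(\log c)''(\Delta+w)$ depends only on $\Delta+w$. But
\[
\EE[X_\Delta^w]=\frac{\Gamma(\Delta+w)^2\,\Gamma(2\Delta)}{\Gamma(\Delta)^2\,\Gamma(2\Delta+w)},\qquad
\partial_\Delta\partial_w\log\EE[X_\Delta^w]=2\psi'(\Delta+w)-2\psi'(2\Delta+w),
\]
which is not a function of $\Delta+w$. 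The culprit is the $\mathrm{Beta}(\Delta,\Delta)$ normalization, which injects the factor $\Gamma(2\Delta+w)$. The identity $\mathrm{Beta}(a,b)\cdot\operatorname{Gamma}(a+b)\stackrel{d}{=}\operatorname{Gamma}(a)$ would need a $\operatorname{Gamma}(2\Delta)$ multiplier, not your $\operatorname{Gamma}(\Delta)$. Your fallback is the same problem, as you note, since $c_k(\Delta)=\EE[X_\Delta^k]/k!$.

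\paragraph{The missing idea.} Do not turn $t^{\Delta-1}(1-t)^{\Delta-1}/B(\Delta,\Delta)$ into a probability law; change the $z$-kernel instead. The paper substitutes $t=(1+e^{-2x})^{-1}$, so that $(1-t)/(1-zt)=F_1(\eta(z)-x)$ with $\eta(z)=-\tfrac12\log(1-z)$. This gives
\[
B(\Delta,\Delta)\,\mathcal{F}(\Delta,z)=2\int_{\R}F_\Delta(x)F_\Delta(\eta(z)-x)\,\rd x=2\,\EE[(\eta(z)-X_\Delta-X_\Delta')_+],\qquad F_\Delta(x)=(1+e^{-2x})^{-\Delta}.
\]
Here $e^{2X_\Delta}\stackrel{d}{=}A_\Delta/E$, so $2(X_\Delta+X_\Delta')\stackrel{d}{=}\log(A_{\Delta,1}A_{\Delta,2})-\log(E_1E_2)$. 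The variable $\log(A_{\Delta,1}A_{\Delta,2})$ has density $\Gamma(\Delta)^{-2}w(u)e^{\Delta u}$, with Mellin transform $\Gamma(\Delta+w)^2/\Gamma(\Delta)^2$. This is exactly the exponential family your chain needs. The remaining pieces then fall into place:
\begin{itemize}
\item The $\Delta$-free part is a convolution with a P\'olya frequency function.
\item The variable $z$ enters through the truncated-power kernel $(\eta-s)_+$ rather than $e^{zx}$.
\item The factor $1/B(\Delta,\Delta)$ is a harmless positive row scaling.
\item Strictness comes from the region $s_1<\eta_1<\cdots<s_n<\eta_n$, on which the truncated-power determinant is triangular and positive.
\end{itemize}
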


Two further standard facts are recorded before the proof of Theorem~\ref{thm:strict.total.positivity}.

\begin{lemma}[The exponential kernel]\label{lem:exponential.kernel}
The kernel $(x,y) \mapsto e^{xy}$ is $\mathrm{STP}_{\infty}$ on $\R^2$.
\end{lemma}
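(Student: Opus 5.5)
The plan is to reduce the exponential kernel to the classical fact that generalized Vandermonde determinants with strictly increasing exponents and strictly increasing positive bases are strictly positive. Fix $m \in \N$, $x_1 < \cdots < x_m$ and $y_1 < \cdots < y_m$ in $\R$.

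\emph{Step 1: normalize the variables.} Write $e^{x_i y_j} = e^{x_i y_1} \, e^{x_i (y_j - y_1)}$. The factor $e^{x_i y_1}$ depends only on the row and is strictly positive, so by Lemma~\ref{lem:scaling.functions} we may assume $y_1 = 0$. Hence it suffices to show that $\det[u_i^{\lambda_j}]_{i,j=1}^m > 0$, where $u_i = e^{x_i}$ satisfy $0 < u_1 < \cdots < u_m$ and $\lambda_j = y_j$ satisfy $\lambda_1 < \cdots < \lambda_m$. This is exactly the kernel $e^{xy}$ after the strictly increasing change of variables $u = e^x$, so Lemma~\ref{lem:increasing.transformation} justifies the reduction.

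\emph{Step 2: nonvanishing by Descartes' rule.} Suppose the determinant vanishes. Then some nontrivial combination $p(u) = \sum_{j} c_j u^{\lambda_j}$ vanishes at the $m$ distinct points $u_1, \ldots, u_m \in (0,\infty)$. The generalized Descartes rule says that a nontrivial exponential sum with $m$ terms has at most $m-1$ positive zeros. We prove this by induction on $m$. Divide $p$ by $u^{\lambda_1}$ and differentiate; this removes one term. Rolle's theorem then shows that the derivative has at least $m-1$ zeros, which contradicts the induction hypothesis. The base case $m = 1$ is immediate, since $c_1 u^{\lambda_1}$ has no positive zeros when $c_1 \neq 0$. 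Hence every such determinant is nonzero.

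\emph{Step 3: fix the sign by a continuity argument.} This is the step I expect to be the main obstacle. Consider the set $\{(x,y) \in \R^m \times \R^m : x_1 < \cdots < x_m, \ y_1 < \cdots < y_m\}$. It is a product of two open convex cones, so it is connected. The determinant is continuous and, by Step 2, never zero on this set, so its sign is constant there. It therefore suffices to evaluate one configuration. Take $x_i = i$ and $y_j = j - 1$. The matrix becomes $[q_i^{\,j-1}]$ with $q_i = e^{i}$, which is a classical Vandermonde matrix. Its determinant is $\prod_{i<k}(q_k - q_i) > 0$. Therefore every minor is strictly positive, and since $m$ was arbitrary, $e^{xy}$ is $\mathrm{STP}_\infty$ on $\R^2$.

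An alternative to Step 3 is an induction on $m$ using a deformation in which $\lambda_m \to \infty$, but the connectedness argument is cleaner. It only requires the observation that the ordered configuration space is convex, which it is.
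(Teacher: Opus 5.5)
Your proof is correct. The paper gives no argument of its own for this lemma. It only cites Karlin's Eq.~(2.1), with Lehmann and Gross--Richards as alternative sources, so your proposal replaces a citation with a self-contained proof.

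What you give is the classical Descartes-rule argument for generalized Vandermonde determinants. The Rolle-based zero count shows that $\det[e^{x_iy_j}]_{i,j=1}^m$ never vanishes on the set of ordered configurations. Convexity of that set, together with one ordinary Vandermonde evaluation at $x_i=i$, $y_j=j-1$, then fixes the sign and gives strict positivity directly. The argument uses nothing beyond Rolle's theorem, continuity, and the paper's own Lemmas~\ref{lem:scaling.functions} and~\ref{lem:increasing.transformation}, but it is purely qualitative.

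A series-based proof in the spirit of Gross--Richards works differently. It expands $e^{xy}=\sum_k x^ky^k/k!$ and applies Cauchy--Binet, so that the determinant divided by the two Vandermonde products becomes a series in products $s_\lambda(x)s_\lambda(y)$ of Schur polynomials with positive coefficients. That makes positivity manifest term by term for positive arguments and extends to other hypergeometric kernels, at the cost of more machinery.

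Two minor points. First, the normalization $y_1=0$ in Step~1 is unnecessary: the zero count holds for arbitrary real exponents, and Step~3 ranges over unnormalized configurations anyway. Second, in Step~2 you should treat the case $c_2=\cdots=c_m=0$ separately. There the differentiated sum vanishes identically and the induction hypothesis does not apply, but $p=c_1u^{\lambda_1}$ with $c_1\neq 0$ has no positive zeros, so the conclusion still holds.
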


\begin{proof}
We refer to \citet[Eq.~(2.1), pp.~15--16]{Karlin1968} for a proof of this result. Alternative proofs are also given by \citet[p.~119]{lehmann1986testing} and \citet{grossrichards1989}.
\end{proof}

\begin{lemma}[The indicator and truncated-power kernels]\label{lem:truncated.power.kernel}
Let $z_+ \leqdef \max\{z,0\}$ for all $z \in \R$. The kernels
\[
K_1(x,y) \leqdef \ind_{\{x<y\}}, \qquad K_2(x,y) \leqdef (y - x)_+,
\]
are $\mathrm{TP}_{\infty}$ on $\R^2$.
\end{lemma}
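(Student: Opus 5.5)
We will prove the two claims directly, treating the indicator kernel $K_1$ first and obtaining $K_2$ from it by composition via Lemma~\ref{lem:continuous.Cauchy.Binet}.

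\textbf{The indicator kernel.} Fix $x_1<\cdots<x_m$ and $y_1<\cdots<y_m$. The matrix $A=[\ind_{\{x_i<y_j\}}]_{i,j=1}^m$ has a staircase structure. Row $i$ is a $0$--$1$ vector of the form $(0,\ldots,0,1,\ldots,1)$, with ones exactly in the columns $j$ where $y_j>x_i$. Since $x_i$ increases with $i$, the set of such columns shrinks as $i$ increases, so the rows are nested.

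Let $r_i$ denote the number of ones in row $i$. Then $r_1\ge r_2\ge\cdots\ge r_m$.
\begin{itemize}
\item If the $r_i$ are not strictly decreasing, or if $r_m=0$, then two rows coincide or a row vanishes, and $\det A=0$.
\item Otherwise $r_i=m-i+1$ for every $i$. In that case $A$ is the upper-triangular matrix of ones, whose determinant is $1$.
\end{itemize}
Hence every minor of $K_1$ lies in $\{0,1\}$, and $K_1$ is $\mathrm{TP}_\infty$.

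\textbf{The truncated-power kernel.} We write $K_2$ as a composition:
\[
(y-x)_+=\int_{\R}\ind_{\{x<t\}}\ind_{\{t<y\}}\,\rd t .
\]
Both factors are $K_1$, which is $\mathrm{TP}_\infty$ by the first step. For fixed finite point sets, the integrand in \eqref{eq:continuous.Cauchy.Binet} vanishes unless $x_1<t_1<\cdots<t_n<y_n$. This region is bounded, and the integrand is a product of determinants bounded by $n!$ in absolute value. So the integral converges absolutely, and Lemma~\ref{lem:continuous.Cauchy.Binet} gives that every minor of $K_2$ is nonnegative.

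The step needing most care is this absolute-convergence check. Both kernels are only $\mathrm{TP}$, not $\mathrm{STP}$, so Lemma~\ref{lem:continuous.Cauchy.Binet} is used only in its nonstrict form. Note also that the $0$/$1$ minors of the first step are all that enter the integrand.
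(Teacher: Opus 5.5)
Your proof is correct and takes the same route as the paper. The paper quotes the explicit formula $\det[\ind_{\{x_i<y_j\}}]_{i,j=1}^n=\ind_{\{x_1<y_1\le x_2<y_2\le\cdots\le x_n<y_n\}}$ from the literature, while your staircase argument derives it, and both then obtain $K_2$ as the composition $K_1\circ K_1$ via the continuous Cauchy--Binet formula. Your check that the integrand vanishes outside the bounded region $x_1<t_1<\cdots<t_n<y_n$ makes explicit the finiteness condition that the paper leaves implicit.
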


\begin{proof}
That the kernels $K_1(x,y)$ and $K_2(x,y)$ are $\mathrm{TP}_{\infty}$ on $\R^2$ is classical; see \citet{Karlin1968}, \citet{schoenberg1973cardinal}, and, specifically for truncated-power kernels, \citet{Bojanov1990}. Indeed, if $x_1 < \cdots < x_n$ and $y_1 < \cdots < y_n$, then
\begin{equation}\label{eq:J.kernel.explicit}
\det\!\big[K_1(x_i,y_j)\big]_{i,j=1}^n =
\begin{cases}
1, & x_1 < y_1 \leq x_2 < y_2 \leq \cdots \leq x_n < y_n, \\
0, & \textrm{otherwise};
\end{cases}
\end{equation}
see, e.g., \citet[Example~5.2]{RichardsUhler2019}. In the case of the kernel $K_2(x,y)$, it is simple to verify that
\[
K_2(x,y) = \int_{\R} K_1(x,t) K_1(t,y) \, \rd t,
\]
and then it follows by \eqref{eq:J.kernel.explicit} and Lemma~\ref{lem:continuous.Cauchy.Binet} that $K_2$ is $\mathrm{TP}_{\infty}$.
\end{proof}

\subsection{Proof of Theorem~\ref{thm:strict.total.positivity}}

For $\Delta > 0$ and $z \in (0,1)$, define
\[
\eta(z) \leqdef -\frac{1}{2}\log(1 - z) > 0, \qquad F_{\Delta}(x) \leqdef (1 + e^{-2x})^{-\Delta}, \qquad x \in \R.
\]
According to Euler's integral representation for the Gauss hypergeometric function \cite[Eqs.~15.1.2 and~15.6.1]{Daalhuis2010},
\begin{equation}\label{eq:Euler.integral}
\mathcal{F}(\Delta,z) = \frac{1}{B(\Delta,\Delta)} \int_0^1 t^{\Delta-1}(1 - t)^{\Delta-1}(1 - zt)^{-\Delta} \, \rd t.
\end{equation}
Set $t = F_1(x) = (1 + e^{-2x})^{-1}$. Then $ \, \rd t = 2t(1 - t) \, \rd x$ and, since $1 - z = e^{-2\eta(z)}$, then we have
\[
\frac{1 - t}{1 - zt} = \frac{1}{1 + (1 - z)e^{2x}} = F_1(\eta(z) - x).
\]
Since $F_{\Delta}(x) = [F_1(x)]^{\Delta}$, then it follows from \eqref{eq:Euler.integral} that
\begin{equation}\label{eq:F.logistic.integral}
\mathcal{F}(\Delta,z) = \frac{2}{B(\Delta,\Delta)} \int_{\R} F_{\Delta}(x)F_{\Delta}(\eta(z) - x) \, \rd x.
\end{equation}
The function $F_{\Delta}$ evidently is continuous and strictly increasing, with $F_{\Delta}(x) \to 0$ as $x \to -\infty$ and $F_{\Delta}(x) \to 1$ as $x \to \infty$. Therefore, $F_{\Delta}$ is the cumulative distribution function of a random variable, which we denote by $X_{\Delta}$.

Let $X_{\Delta}$ and $X_{\Delta}'$ be independent, identically distributed random variables with the distribution function $F_{\Delta}$, and denote by $\EE$ expectation with respect to their joint probability distribution. Then by Tonelli's theorem and the mutual independence of $X_{\Delta}$ and $X_{\Delta}'$,
\begin{align}\label{eq:positive.part.identity}
\int_{\R}F_{\Delta}(x)F_{\Delta}(\eta(z) - x) \, \rd x &= \EE\left[\int_{\R}\ind_{\{X_{\Delta}\leq x \leq \eta(z) - X_{\Delta}'\}} \, \rd x\right] \nonumber \\
&= \EE[(\eta(z) - X_{\Delta} - X_{\Delta}')_+] \reqdef H(\Delta,\eta(z)).
\end{align}
Equations \eqref{eq:F.logistic.integral} and \eqref{eq:positive.part.identity} yield
\[
\mathcal{F}(\Delta,z) = \frac{2}{B(\Delta,\Delta)} H(\Delta,\eta(z)).
\]
Since the factor $2/B(\Delta,\Delta)$ is strictly positive and depends on $\Delta$ only, and the mapping $z \mapsto \eta(z)$ is strictly increasing, then Lemmas~\ref{lem:scaling.functions} and~\ref{lem:increasing.transformation} show that it suffices to prove that the kernel $H(\Delta,\eta)$ is $\mathrm{STP}_{\infty}$ on $(0,\infty)^2$.

Let $P(\Delta,s)$ denote the probability density function of $X_{\Delta} + X_{\Delta}'$. Then, by \eqref{eq:positive.part.identity},
\[
H(\Delta,\eta) = \int_{\R} P(\Delta,s) \, (\eta - s)_+ \, \rd s.
\]
We will show below that $P(\Delta,s)$ is $\mathrm{STP}_{\infty}$ on $(0,\infty)\times \R$. Assuming for now that this is true, Lemmas~\ref{lem:continuous.Cauchy.Binet} and~\ref{lem:truncated.power.kernel} imply, for any $n\in \N$, $0 < \Delta_1 < \cdots < \Delta_n$ and $\eta_1 < \cdots < \eta_n$,
\[
\det\!\big[H(\Delta_i,\eta_j)\big]_{i,j=1}^n = \operatornamewithlimits\idotsint_{s_1 < \cdots < s_n} \det\!\big[P(\Delta_i,s_k)\big]_{i,k=1}^n \, \det\!\big[(\eta_j - s_k)_+\big]_{k,j=1}^n \prod_{k=1}^n \, \rd s_k \geq 0.
\]
Note that the region $\{(s_1,\ldots,s_n) \in \R^n: s_1 < \eta_1 < s_2 < \eta_2 < \cdots < s_n < \eta_n\}$ is nonempty, open, and has strictly positive Lebesgue measure. Moreover, on this region, the matrix $[(\eta_j - s_i)_+]_{i,j=1}^n$ is upper triangular with diagonal entries $\eta_i - s_i > 0$, $i = 1,\ldots,n$, so $\det[(\eta_j - s_i)_+]_{i,j=1}^n > 0$. Thus
\[
\det\!\big[H(\Delta_i,\eta_j)\big]_{i,j=1}^n > 0.
\]
Since $n$ was chosen arbitrarily, then it follows that $H(\Delta,\eta)$ is $\mathrm{STP}_{\infty}$ on $(0,\infty)\times \R$.

It remains to show that $P(\Delta,s)$ is $\mathrm{STP}_{\infty}$ on $(0,\infty)\times \R$. To this end, let $A_{\Delta} \sim \operatorname{Gamma}(\Delta)$ and $E\sim\operatorname{Gamma}(1)$ be mutually independent gamma-distributed random variables. Since
\[
\PP(F_1(X_{\Delta}) \leq t) = \PP(X_{\Delta} \leq F_1^{-1}(t)) = F_{\Delta}(F_1^{-1}(t)) = t^{\Delta}, \qquad 0 < t < 1,
\]
then it follows that $F_1(X_{\Delta}) \sim \operatorname{Beta}(\Delta,1)$. It is also well known that $A_{\Delta}/(A_{\Delta} + E)\sim \operatorname{Beta}(\Delta,1)$. Since $F_1(x)/(1 - F_1(x)) = e^{2x}$, then
\[
e^{2X_{\Delta}} = \frac{F_1(X_{\Delta})}{1 - F_1(X_{\Delta})} \stackrel{d}{=} \frac{A_{\Delta}/(A_{\Delta} + E)}{1 - (A_{\Delta}/(A_{\Delta} + E))} = \frac{A_{\Delta}}{E},
\]
where $U \stackrel{d}{=} V$ denotes equality \textit{in distribution} of two random variables $U$ and $V$, and thus
\begin{equation}\label{eq:X.beta.gamma}
2X_{\Delta} \stackrel{d}{=} \log A_{\Delta} - \log E.
\end{equation}

Consider mutually independent random variables $A_{\Delta,1}$, $A_{\Delta,2}$, $E_1$, and $E_2$ such that $A_{\Delta,1},A_{\Delta,2}\sim\operatorname{Gamma}(\Delta)$ and $E_1,E_2\sim\operatorname{Gamma}(1)$. Applying \eqref{eq:X.beta.gamma} to the pairs $(A_{\Delta,1},E_1)$ and $(A_{\Delta,2},E_2)$ yields
\begin{equation}\label{eq:X.Delta.Delta.prime}
X_{\Delta} + X_{\Delta}' \stackrel{d}{=} \frac{1}{2} (U_{\Delta} - V),
\end{equation}
where $U_{\Delta} = \log A_{\Delta,1} + \log A_{\Delta,2}$, $V = \log E_1 + \log E_2 \stackrel{d}{=} U_1$, and $U_{\Delta}$ and $V$ are independent. Let $q_{\Delta}$ denote the probability density function of $U_{\Delta}$. Then, it follows from \eqref{eq:X.Delta.Delta.prime} and the well-known convolution formula for the probability density function of the sum of independent random variables that
\[
P(\Delta,s) = 2 \int_{\R} q_{\Delta}(u) q_1(u - 2s) \, \rd u.
\]
In light of the continuous Cauchy--Binet formula (Lemma~\ref{lem:continuous.Cauchy.Binet}), along with Lemma~\ref{lem:increasing.transformation}, it suffices to show that the kernels $(\Delta,u) \mapsto q_{\Delta}(u)$ and $(x,y) \mapsto q_1(x - y)$ are $\mathrm{STP}_{\infty}$.

It is straightforward to deduce that the probability density function of $\log A_{\Delta}$ is
\[
\ell_{\Delta}(x) \leqdef \frac{1}{\Gamma(\Delta)} \exp(\Delta x - e^x), \qquad x \in \R.
\]
Again applying the convolution formula for the probability density function of the sum of independent random variables gives
\[
q_{\Delta}(u) = \int_{\R}\ell_{\Delta}(x) \ell_{\Delta}(u - x) \, \rd x = \frac{1}{[\Gamma(\Delta)]^2} \, w(u) \, e^{\Delta u},
\]
where
\[
w(u) \leqdef \int_{\R} \exp(-e^x - e^{u - x}) \rd x, \qquad u \in \R.
\]
Note that the function $w$ is strictly positive, so Lemmas~\ref{lem:scaling.functions}~and~\ref{lem:exponential.kernel} imply that the kernel $(\Delta,u) \mapsto q_{\Delta}(u)$ is $\mathrm{STP}_{\infty}$ on $(0,\infty)\times \R$.

Consider the translation kernel corresponding to $\ell_1$, viz.,
\[
\ell_1(x - y) = e^x e^{-y}\exp\left(e^x(-e^{-y})\right).
\]
Since $x \mapsto e^x$ and $y \mapsto -e^{-y}$ are strictly increasing, then it follows from Lemmas~\ref{lem:increasing.transformation}~and~\ref{lem:exponential.kernel} that the kernel $(x,y)\mapsto \exp\left(e^x(-e^{-y})\right)$ is $\mathrm{STP}_{\infty}$ on $\R^2$. By Lemma~\ref{lem:scaling.functions}, it follows that $(x,y) \mapsto \ell_1(x - y)$ is $\mathrm{STP}_{\infty}$ on $\R^2$. Therefore, by Lemma~\ref{lem:continuous.Cauchy.Binet}, the convolution kernel
\[
q_1(x - y) = \int_{\R} \ell_1(x - t) \ell_1(t - y) \, \rd t
\]
is $\mathrm{STP}_{\infty}$ on $\R^2$. This completes the proof of Theorem~\ref{thm:strict.total.positivity}.
$\qed$

\section{Total positivity of the conformal-block kernel}\label{sec:TP.G}

Since $z^{\Delta} = e^{\Delta\log z}$ and $z \mapsto \log z$ is strictly increasing, Lemma~\ref{lem:exponential.kernel} shows that $(\Delta,z) \mapsto z^{\Delta}$ is $\mathrm{STP}_{\infty}$. Nevertheless, pointwise multiplication does not preserve total positivity in general, even at order three; see \citet[Chapter~2]{Karlin1968}. Thus, Theorem~\ref{thm:strict.total.positivity} does not by itself settle the corresponding question for $G_{\Delta}(z) = z^{\Delta}\mathcal{F}(\Delta,z)$. The next theorem establishes the exact parameter threshold for $\Delta$ at or above which $G_{\Delta}(z)$ remains $\mathrm{STP}_{\infty}$.

\begin{theorem}\label{thm:conformal.block.threshold}
For $\tau > 0$, the conformal-block kernel $G_{\Delta}(z) = z^{\Delta} {}_2F_1(\Delta,\Delta;2\Delta;z)$, $\Delta \geq \tau$, $0 < z < 1$, is $\mathrm{STP}_{\infty}$ if and only if $\tau \geq 1/2$.

Moreover, for every $\tau \in (0,1/2)$ and $z_0 \in [0,1)$, there exist an integer $m \in \N$ and real numbers $\tau < \Delta_1 < \cdots < \Delta_{2m + 1} < 1/2$ and $z_0 < z_1 < \cdots < z_{2m + 1} < 1$ such that
\begin{equation}\label{eq:sharp.det.neg}
\det\!\big[G_{\Delta_i}(z_j)\big]_{i,j=1}^{2m + 1} < 0.
\end{equation}
Consequently, there exists no $z_0 \in [0,1)$ such that the kernel $(\Delta,z) \mapsto G_{\Delta}(z)$ is $\mathrm{TP}_{\infty}$ on $(0,\infty) \times (z_0,1)$.
\end{theorem}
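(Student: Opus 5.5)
The proof splits into sufficiency for $\tau \geq 1/2$ and the construction of negative odd-order minors for $\tau < 1/2$.

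\emph{Sufficiency.} I would start from the classical identity linking the one-dimensional block to a Legendre function of the second kind. With $z = 1/\cosh^2$-type or $z = 4w/(1+w)^2$-type substitutions one gets, up to explicit positive factors, $G_\Delta(z) \propto Q_{\Delta-1}(x)$ with $x = 2/z - 1 > 1$. I would then use the Laplace-type integral representation
\[
Q_\nu(x) = \int_0^\infty \bigl(x + \sqrt{x^2-1}\cosh t\bigr)^{-\nu-1}\,\rd t ,
\]
or better the Heine representation with $x = \cosh \theta$,
\[
Q_{\nu}(\cosh\theta) = \int_\theta^\infty \frac{e^{-(\nu+1/2)s}}{\sqrt{2\cosh s - 2\cosh\theta}}\,\rd s .
\]
This has the form $\int e^{-(\Delta - 1/2)s} K(s,\theta)\,\rd s$. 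The exponential kernel in $(\Delta-1/2, -s)$ is $\mathrm{STP}_\infty$ by Lemma~\ref{lem:exponential.kernel}, and the dependence on $\Delta$ through $\Delta - 1/2$ is where the threshold enters. It then remains to show that $(s,\theta) \mapsto (2\cosh s - 2\cosh\theta)_+^{-1/2}$, or an equivalent kernel after an increasing change of variables (Lemma~\ref{lem:increasing.transformation}), is totally positive in the correct orientation. Because $z \mapsto \theta$ is decreasing, the orientation needs care.

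If the kernel route fails, I would fall back on the paper's stated approach: an induction on $n$ via generalized Wronskians. The goal is to show that every $n\times n$ minor is nonzero by proving that the functions $\{G_{\Delta_i}\}$ form an extended Chebyshev system on $(0,1)$ for $\Delta_i \ge 1/2$. The tool is the Legendre ODE, which allows repeated division by positive functions and differentiation, reducing the problem to a system of the same type. Positivity would then follow by continuity and deformation from a configuration where the sign is known. A natural choice is $\Delta_i$ spread far apart, where the minor is dominated by the diagonal product, as in Theorem~\ref{thm:strict.total.positivity}. I expect this step to be the main obstacle: verifying that the Wronskian recursion preserves the sign exactly when $\Delta \ge 1/2$.

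\emph{Sharpness.} I would use the zero-balanced expansion near $z=1$,
\[
\mathcal{F}(\Delta,z) = \frac{\Gamma(2\Delta)}{\Gamma(\Delta)^2}\sum_{k\ge0}\frac{(\Delta)_k^2}{k!^2}(1-z)^k\bigl[2\psi(k+1)-2\psi(\Delta+k)-\log(1-z)\bigr].
\]
Setting $\epsilon = 1-z$, the leading behaviour is $G_\Delta \approx c(\Delta)\,[\,-\log\epsilon + a(\Delta)\,] + O(\epsilon\log\epsilon)$, where $c(\Delta)=\Gamma(2\Delta)/\Gamma(\Delta)^2$ and $a(\Delta) = -2\gamma - 2\psi(\Delta)$. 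Expanding $z^\Delta = 1 - \Delta\epsilon + \dots$ as well, a $(2m+1)\times(2m+1)$ minor with $z_j \to 1$ is governed by a generalized Vandermonde-type determinant in the functions $\epsilon^k$ and $\epsilon^k\log\epsilon$. Its sign is determined by an alternant in the coefficient functions $\Delta \mapsto c(\Delta),\, c(\Delta)a(\Delta),\, \dots$ evaluated at the $\Delta_i$. For $\Delta<1/2$ I would identify a coefficient combination whose sign flips. Then, by letting the $\Delta_i$ coalesce inside $(\tau,1/2)$, which turns the alternant into a Wronskian, and choosing $m$ large, a moment argument (a Hankel determinant of moments of a signed measure fails to be positive) produces the negative minor~\eqref{eq:sharp.det.neg}. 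Since all the $z_j$ may be taken arbitrarily close to $1$, the final claim about $z_0$ follows immediately.

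Combining the two directions gives the equivalence: for $\tau<1/2$ the negative minor lies in $[\tau,\infty)$, and for $\tau\ge1/2$ we have sufficiency.
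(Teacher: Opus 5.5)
Your sufficiency argument would fail as proposed, and the theorem itself shows why. The Heine representation $Q_{\Delta-1}(\cosh\theta)=\int_\theta^\infty e^{-(\Delta-1/2)s}(2\cosh s-2\cosh\theta)^{-1/2}\,\rd s$ holds for every $\Delta>0$. Moreover $e^{-(\Delta-1/2)s}=e^{s/2}e^{-\Delta s}$, and $e^{s/2}$ is a positive function of $s$ alone. By Lemma~\ref{lem:scaling.functions}, the value $1/2$ is therefore invisible to any Cauchy--Binet argument built on this factorization. If $K(s,\theta)=(2\cosh s-2\cosh\theta)_+^{-1/2}$ had the sign regularity you need, you would get $\mathrm{STP}_\infty$ on all of $(0,\infty)\times(0,1)$, contradicting the second half of the statement.

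Concretely, with $a=\cosh s$ and $b=\cosh\theta$ the kernel is $2^{-1/2}(a-b)_+^{-1/2}$. For $b_1<b_2<a_1<a_2$, its $2\times2$ minor is a positive multiple of $[(a_1-b_1)(a_2-b_2)]^{-1/2}-[(a_1-b_2)(a_2-b_1)]^{-1/2}<0$, because $(a_1-b_1)(a_2-b_2)-(a_1-b_2)(a_2-b_1)=(a_2-a_1)(b_2-b_1)>0$. For $b_1<a_1<b_2<a_2$, the minor is strictly positive. So $K$ is not sign-regular of order two in either orientation.

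Your fallback names the right genre but omits exactly what you flag as the main obstacle. The missing idea is the Liouville normal form: $u_s(x)=(\sinh x)^{1/2}Q_{s-1/2}(\cosh x)$, with $s=\Delta-\tfrac12$ and $z=\operatorname{sech}^2(x/2)$, satisfies $u_s''+u_s/(4\sinh^2x)=s^2u_s$ and $u_s\sim c_se^{-sx}$.
\begin{itemize}
\item Integrating $\frac{\rd}{\rd x}W_x(u_{s_1},u_{s_i})=(s_i^2-s_1^2)u_{s_1}u_{s_i}$ from $\infty$ gives $W_x(u_{s_1},u_{s_i})<0$. This needs $s_i^2>s_1^2$, and since the equation sees only $s^2$, this is precisely where $\Delta\ge1/2$ enters.
\item The Darboux step $v_{s_i}=-\bigl(u_{s_i}'-(u_{s_1}'/u_{s_1})u_{s_i}\bigr)$ produces positive functions with the same kind of decay. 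They solve an equation of the same form with potential $V-2(\log u_{s_1})''$. This is what makes ``a system of the same type'' true and closes the induction, giving $\operatorname{sgn}W_x(u_{s_1},\ldots,u_{s_k})=(-1)^{k(k-1)/2}$.
\item The Karlin--Studden criterion and a column reversal (since $x$ decreases in $z$) then give positive minors, with no deformation argument needed.
\end{itemize}

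For sharpness, your outline (expansion at $z=1$, confluent limits, a Hankel-moment contradiction) matches the paper's skeleton, but each load-bearing step is a placeholder.
\begin{itemize}
\item You need the structural fact, derived from the hypergeometric ODE, that with $y=-\log(1-z)$, $B(\Delta,\Delta)G_\Delta(z)=\sum_k e^{-ky}[P_k(\lambda)y+P_k(\lambda)h(\Delta)+S_k(\lambda)]$. Here $\lambda=\Delta(\Delta-1)=\xi-\tfrac14$, $\xi=(\tfrac12-\Delta)^2$, $h=-2\gamma-2\psi$, and $\deg P_k=k$ with leading coefficient $(k!)^{-2}$. Only this lets the leading term of the order-$(2m+1)$ determinant of mixed derivatives collapse to a positive constant times $y\,e^{-m^2y}(2\Delta-1)^{m(2m+1)}\det H_m(\xi)$, where $H_m=[c^{(i+j)}(\xi)/(i+j)!]_{i,j=1}^m$ and $c(\xi)=-2\gamma-2\psi(\tfrac12-\xi^{1/2})$. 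That leading term comes from the selection $P_0,Q_0,\ldots,P_{m-1},Q_{m-1},P_m$, which is separated from the competing selection with $Q_m$ only by the factor $y$. Without this structure, your ``alternant in the coefficient functions'' has no computable sign.
\item The contradiction needs a specific measure. $c$ is a Herglotz function with $c^{(k)}(\xi)/k!=\int(t-\xi)^{-k-1}\,\rd\mu(t)$ for $k\ge2$, where $\mu$ has an atom of mass $2$ at $t=1/4$. Total positivity would force $-H_m(\xi_0)$ to be positive definite for all $m$. With $u=(t-\xi_0)^{-1}$, that means $\int u^3R(u)^2\,\rd\mu<0$ for all nonzero polynomials $R$, which the isolated positive atom at $u_0=(\tfrac14-\xi_0)^{-1}$ contradicts.
\item $\Delta_0$ must be chosen with $\det H_m(\xi_0)\ne0$ for every $m$, or the leading term may vanish.
\end{itemize}
Note also that no fixed order works near the threshold: for each $m$, $(-1)^m\det H_m(\xi)>0$ as $\Delta\uparrow\tfrac12$. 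So ``a coefficient combination whose sign flips'' can only be produced non-constructively, with $m$ growing as $\Delta_0\to1/2$.
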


The proof has two parts. Subsection~\ref{subsec:above.threshold} proves the sufficiency assertion for $\Delta \geq 1/2$. A change of variables rewrites $G_{\Delta}(z)$ in terms of a family of strictly positive Legendre functions that solve the same second-order differential equation. Their behavior at infinity determines the signs of all their Wronskians, and a standard Wronskian criterion then gives the signs of the determinants evaluated at ordered $x$-values.

Subsection~\ref{subsec:threshold.sharpness} proves that the threshold is sharp. The kernel is expanded as $z \to 1$, and the leading term of each odd determinant of mixed derivatives is governed by a finite matrix formed from derivatives of the digamma function. A representation by a nonnegative measure shows that these matrices cannot all have the signs required by total positivity, and a Taylor expansion converts the resulting determinant of mixed derivatives into a strictly negative ordinary minor. The auxiliary Wronskian and determinant calculations are proved in Appendices~\ref{app:Wronskian.tools} and~\ref{app:boundary.calculations}.

\subsection{Proof of the sufficiency portion of Theorem~\ref{thm:conformal.block.threshold}}\label{subsec:above.threshold}

We first rewrite the conformal block $G_{\Delta}(z)$ in a form in which the transformed parameter $s = \Delta - 1/2$ appears as the decay rate of a solution to a differential equation. Multiplying Euler's integral formula, \eqref{eq:Euler.integral}, by $z^{\Delta}$ and then substituting $u = 2t - 1$, we obtain
\begin{equation}\label{eq:G.integral.before.Legendre}
G_{\Delta}(z) = z^{\Delta} \mathcal{F}(\Delta,z) = \frac{2^{1 - \Delta}}{B(\Delta,\Delta)}\int_{-1}^1 (1 - u^2)^{\Delta - 1}(2z^{-1} - 1 - u)^{-\Delta} \, \rd u.
\end{equation}
The Legendre function of the second kind, with the standard normalization (\citet[\S14.3(ii), Eq.~14.3.7]{Dunster2010}), has the integral representation
\begin{equation}\label{eq:Legendre.integral}
Q_{\nu}(x) = 2^{-\nu - 1}\int_{-1}^1 (1 - u^2)^{\nu}(x - u)^{-\nu - 1} \, \rd u, \qquad \nu > -1, \quad x > 1;
\end{equation}
see also \citet[\S3.7, Eq.~(5), p.~155]{ErdelyiEtAl1953}. Taking $\nu = \Delta - 1$ and $x = 2z^{-1} - 1$ in \eqref{eq:Legendre.integral} and comparing the result with \eqref{eq:G.integral.before.Legendre} yields
\[
G_{\Delta}(z) = \frac{2}{B(\Delta,\Delta)} Q_{\Delta - 1}(2z^{-1} - 1), \qquad \Delta > 0, \quad 0 < z < 1.
\]

We now make the changes of variables
\[
z = \operatorname{sech}^2(x/2), \qquad s = \Delta - \frac{1}{2}.
\]
The map $x \mapsto \operatorname{sech}^2(x/2)$ is strictly decreasing from $(0,\infty)$ onto $(0,1)$, while $\Delta \geq 1/2$ is equivalent to $s \geq 0$. Moreover,
\[
2z^{-1} - 1 = 2\cosh^2(x/2) - 1 = \cosh x.
\]
Hence,
\begin{equation}\label{eq:G.us}
G_{\Delta}(z) = \frac{2}{B(\Delta,\Delta)} \: (\sinh x)^{-1/2} \: u_s(x), \qquad \Delta > 0, \quad 0 < z < 1,
\end{equation}
where
\begin{equation}\label{eq:us.Q.function}
u_s(x) \leqdef (\sinh x)^{1/2} \: Q_{s - 1/2}(\cosh x), \qquad x > 0.
\end{equation}

The following lemma, whose proof is deferred to Appendix~\ref{app:Wronskian.tools}, establishes the differential equation satisfied by the functions $u_s$ and details their precise exponential decay behavior as $x \to \infty$.  In stating the lemma, we will encounter the strictly positive normalizing constant
\begin{equation}\label{eq:c:s:constant}
c_s \leqdef \frac{1}{2^{1/2}} \int_{-1}^1 (1 - u^2)^{s - 1/2} \, \rd u = \frac{\pi^{1/2} \Gamma(s+\frac{1}{2})}{2^{1/2} \Gamma(s+1)}, \qquad s > -\frac{1}{2}.
\end{equation}

\smallskip

\begin{lemma}[Properties of the transformed Legendre functions]\label{lem:us.properties}
For every $s \geq 0$, the function $u_s \in C^{\infty}((0,\infty))$ defined in \eqref{eq:us.Q.function} is strictly positive on $(0,\infty)$ and satisfies the differential equation
\begin{equation}\label{eq:Schrodinger.equation}
u_s''(x) + \frac{u_s(x)}{4\sinh^2 x} = s^2u_s(x).
\end{equation}
Also, for each $s > 0$, as $x \to \infty$,
\begin{equation}\label{eq:asymp.us.us.prime}
u_s(x) = c_s e^{-sx}(1 + o(1)), \qquad u_s'(x) = -s c_s e^{-sx}(1 + o(1)).
\end{equation}
Further, $u_0(x) \to c_0 \equiv \pi/2^{1/2}$ and $u_0'(x) \to 0$ as $x \to \infty$.
\end{lemma}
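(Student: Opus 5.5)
We work directly from the integral representation \eqref{eq:Legendre.integral} with $\nu = s - 1/2$ and $x \mapsto \cosh x$. This gives
\[
u_s(x) = (\sinh x)^{1/2}\, 2^{-s-1/2}\int_{-1}^1 (1-u^2)^{s-1/2}(\cosh x - u)^{-s-1/2}\,\rd u .
\]
Smoothness follows by differentiating under the integral, which is legitimate since $\cosh x - u \geq \cosh x - 1 > 0$ locally uniformly for $x > 0$. Strict positivity is immediate because the integrand is strictly positive for $s \geq 0$, since the exponent $s - 1/2 > -1$ keeps the integral finite.

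\textbf{The differential equation.} We use the fact that $Q_\nu$ satisfies Legendre's equation $(1-w^2)Q'' - 2wQ' + \nu(\nu+1)Q = 0$ for $w > 1$ (\citet{Dunster2010}). Setting $w = \cosh x$ and $y(x) = Q_\nu(\cosh x)$, the equation becomes $y'' + \coth x\, y' = \nu(\nu+1) y$. Writing $y = (\sinh x)^{-1/2} u$ removes the first-order term. Concretely, $y' = (\sinh x)^{-1/2}(u' - \tfrac12\coth x\, u)$, and
\[
y'' = (\sinh x)^{-1/2}\big(u'' - \coth x\, u' + (\tfrac34\coth^2 x + \tfrac12\operatorname{csch}^2 x)u\big),
\]
up to routine bookkeeping. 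Adding $\coth x\, y'$ leaves
\[
u'' + \big(-\tfrac14\coth^2 x + \tfrac12\operatorname{csch}^2 x\big)u = \nu(\nu+1)u .
\]
Using $\coth^2 x = 1 + \operatorname{csch}^2 x$, the bracket equals $-\tfrac14 + \tfrac14\operatorname{csch}^2 x$. Since $\nu(\nu+1) + \tfrac14 = (\nu + \tfrac12)^2 = s^2$, this is exactly \eqref{eq:Schrodinger.equation}. Should one prefer not to cite Legendre's equation, it can be checked directly from the integral by differentiating under the integral sign and integrating by parts in $u$. That route is the more laborious one, so I would cite the equation.

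\textbf{Asymptotics.} As $x \to \infty$, $\cosh x - u = \tfrac12 e^{x}(1 + O(e^{-x}))$ uniformly in $u \in [-1,1]$, and $(\sinh x)^{1/2} = 2^{-1/2}e^{x/2}(1 + O(e^{-2x}))$. Substituting,
\[
u_s(x) = 2^{-1/2}e^{x/2}\,2^{-s-1/2}\,2^{s+1/2}e^{-(s+1/2)x}\int_{-1}^1(1-u^2)^{s-1/2}\,\rd u\,(1+o(1)) = c_s e^{-sx}(1+o(1)),
\]
by dominated convergence, which matches \eqref{eq:c:s:constant}. For the derivative, I differentiate the integral representation:
\[
u_s'(x) = \tfrac12\coth x\, u_s(x) - (s+\tfrac12)(\sinh x)^{3/2}\,2^{-s-1/2}\int (1-u^2)^{s-1/2}(\cosh x - u)^{-s-3/2}\,\rd u .
\]
The same uniform expansion shows the second term is $(s+\tfrac12)c_s e^{-sx}(1+o(1))$, using $(\sinh x)/(\cosh x - u) \to 1$ uniformly. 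The first term is $\tfrac12 c_s e^{-sx}(1+o(1))$, so $u_s' = -s c_s e^{-sx}(1+o(1))$. For $s = 0$ the same computation gives $u_0 \to c_0 = 2^{-1/2}\int_{-1}^1(1-u^2)^{-1/2}\,\rd u = \pi/2^{1/2}$. It also gives $u_0' \to \tfrac12 c_0 - \tfrac12 c_0 = 0$.

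\textbf{Main obstacle.} The only delicate point is the derivative asymptotics. There one must keep the $o(1)$ errors uniform in $u$ near $\pm1$, where the weight $(1-u^2)^{s-1/2}$ is singular when $s < 1/2$. Because the weight is integrable and the ratios $(\cosh x - u)/e^{x}$ converge uniformly on $[-1,1]$, dominated convergence handles this cleanly.
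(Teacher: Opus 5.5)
Your proposal is correct and follows essentially the paper's route: Legendre's equation in the variable $\cosh x$, then the substitution $y=(\sinh x)^{-1/2}u$ to remove the first-order term, and dominated convergence in the integral representation for the asymptotics of $u_s$ and $u_s'$ (splitting $u_s'$ into two terms instead of using the paper's logarithmic derivative makes no real difference). One small slip: the coefficient in your formula for $y''$ should be $\tfrac14\coth^2 x+\tfrac12\operatorname{csch}^2 x$, not $\tfrac34\coth^2 x$, although your next displayed equation is correct and consistent with $\tfrac14$.
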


\smallskip

For sufficiently smooth real-valued functions $f_1,\ldots,f_n$ on a common interval, denote their Wronskian by
\[
W_x(f_1,\ldots,f_n) \leqdef \det\!\big[f_i^{(j - 1)}(x)\big]_{i,j=1}^n.
\]
The next lemma abstracts the specific differential equation of Lemma~\ref{lem:us.properties} by replacing the potential $-1/(4\sinh^2 x)$ by a general smooth potential $V(x)$.

This abstraction enables the induction carried out in the proof of the lemma. Starting from the family of strictly positive functions $(u_{s_1},\ldots,u_{s_n})$, the proof introduces the first-order operator $\mathcal{A} \leqdef \frac{\rd}{\rd x} - \smash{\frac{u_{s_1}'(x)}{u_{s_1}(x)}}$, which annihilates $u_{s_1}$, and defines the transformed functions by
\[
v_{s_i}(x) \leqdef -\mathcal{A}u_{s_i}(x), \qquad i = 2,\ldots,n.
\]
It then shows that $(v_{s_2},\ldots,v_{s_n})$ is a new family of strictly positive functions whose members have the same prescribed type of decay at infinity and satisfy differential equations of the same form, with the new common potential $\widetilde{V}(x) = V(x) - 2\frac{\rd^2}{\rd x^2}\log u_{s_1}(x)$. Consequently, this construction can be iterated to determine the sign of $W_x(u_{s_1},\ldots,u_{s_k})$ for each $k = 1,\ldots,n$.

\begin{lemma}[Wronskian sign induction]\label{lem:Darboux.Wronskians}
Let $V \in C^{\infty}((0,\infty))$, and let $0 \leq s_1 < \cdots < s_n$. For $i = 1,\ldots,n$, suppose that the strictly positive functions $u_{s_i} \in C^{\infty}((0,\infty))$ satisfy the differential equation
\[
u_{s_i}''(x) - V(x) u_{s_i}(x) = s_i^2 u_{s_i}(x), \qquad x\in (0,\infty).
\]
Suppose also that, for every $i \in \{1,\ldots,n\}$ such that $s_i > 0$, there exists a constant $a_i > 0$ such that, as $x \to \infty$,
\begin{equation}\label{eq:behavior.infinity}
u_{s_i}(x) = a_i e^{-s_ix}(1 + o(1)), \qquad u_{s_i}'(x) = -s_i a_i e^{-s_ix}(1 + o(1)).
\end{equation}
If $s_1 = 0$, suppose also that $u_{s_1}(x) \to a_1 > 0$ and $u_{s_1}'(x) \to 0$ as $x \to \infty$. Then, for $1 \leq k \leq n$ and $x > 0$,
\[
\operatorname{sgn}W_x(u_{s_1},\ldots,u_{s_k}) = (-1)^{k(k - 1)/2}.
\]
\end{lemma}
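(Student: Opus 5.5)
We prove the lemma by induction on $n$, using the Darboux/Crum factorization sketched just before its statement. The case $n=1$ is immediate: $W_x(u_{s_1}) = u_{s_1}(x) > 0$. For the inductive step we set $\varphi \leqdef u_{s_1}$ and $v_i \leqdef -\mathcal{A}u_{s_i} = -(u_{s_i}' - (\varphi'/\varphi)u_{s_i}) = -W_x(\varphi,u_{s_i})/\varphi$ for $i=2,\ldots,n$. We then check three facts about the $v_i$: the differential equation, positivity, and the asymptotics at infinity.

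For the \emph{differential equation}, a direct computation (the standard Darboux transformation) gives $v_i'' - \widetilde V v_i = s_i^2 v_i$ with $\widetilde V = V - 2(\log\varphi)''$. This uses only $\varphi'' = (V+s_1^2)\varphi$ and $u_{s_i}'' = (V+s_i^2)u_{s_i}$, and the smoothness of $\widetilde V$ follows from $\varphi>0$. For the \emph{Wronskian reduction}, the classical identity $W(\varphi f_2,\ldots,\varphi f_k) = \varphi^{k}W(f_1,\ldots)$, applied with $f_i = u_{s_i}/\varphi$ and $f_1 \equiv 1$, yields
\[
W_x(u_{s_1},\ldots,u_{s_k}) = \varphi^{k}\,W_x\big((u_{s_2}/\varphi)',\ldots,(u_{s_k}/\varphi)'\big) = \varphi^{k}\,\varphi^{-(k-1)}\,W_x(-v_2,\ldots,-v_k),
\]
because $(u_{s_i}/\varphi)' = -v_i/\varphi$ and the same scaling identity removes the factor $\varphi^{-1}$. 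Hence $\operatorname{sgn}W(u_{s_1},\ldots,u_{s_k}) = (-1)^{k-1}\operatorname{sgn}W(v_2,\ldots,v_k)$. The induction hypothesis, applied to the $k-1$ functions $v_2,\ldots,v_k$, gives sign $(-1)^{(k-1)(k-2)/2}$, and $(-1)^{k-1+(k-1)(k-2)/2} = (-1)^{k(k-1)/2}$, as required.

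For the \emph{asymptotics}, note $v_i = -W(\varphi,u_{s_i})/\varphi$. From \eqref{eq:behavior.infinity}, $W(\varphi,u_{s_i}) = \varphi u_{s_i}' - \varphi' u_{s_i} = -(s_i - s_1)a_1a_ie^{-(s_1+s_i)x}(1+o(1))$, and the case $s_1=0$ is covered by the assumed limits. Therefore $v_i = (s_i-s_1)a_ie^{-s_ix}(1+o(1))$ with new constant $(s_i-s_1)a_i>0$, and $v_i>0$ for large $x$. For $v_i'$ we use $v_i' = -W'/\varphi + W\varphi'/\varphi^2$ together with $W(\varphi,u_{s_i})' = (s_i^2-s_1^2)\varphi u_{s_i}$, which follows from the two ODEs. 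This gives $v_i' = -s_i(s_i-s_1)a_ie^{-s_ix}(1+o(1))$. The new exponents $s_2<\cdots<s_n$ are all positive, so the hypothesis needed for the special case $s=0$ does not reappear after the first step.

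\textbf{Main obstacle: global positivity of $v_i$.} The asymptotics only give $v_i>0$ near infinity, so we need an argument on all of $(0,\infty)$. We use the Wronskian $w = W(\varphi,u_{s_i})$, which satisfies $w' = (s_i^2-s_1^2)\varphi u_{s_i} > 0$, so $w$ is strictly increasing. The asymptotics show $w(x)\to 0^-$ as $x\to\infty$. A strictly increasing function with limit $0$ at infinity must be strictly negative everywhere. Hence $v_i = -w/\varphi > 0$ on all of $(0,\infty)$, which completes the induction.
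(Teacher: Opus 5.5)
Your proof is correct and follows essentially the same route as the paper. It uses the Darboux step $v_{s_i} = -\mathcal{A}u_{s_i} = -W_x(u_{s_1},u_{s_i})/u_{s_1}$ and gets global positivity from the strictly increasing Wronskian $W_x(u_{s_1},u_{s_i})$ that tends to $0$ at infinity; it then combines the reduction $W_x(u_{s_1},\ldots,u_{s_k}) = (-1)^{k-1}u_{s_1}W_x(v_{s_2},\ldots,v_{s_k})$ with the induction on the new potential $V - 2(\log u_{s_1})''$. The differences are cosmetic: you derive the reduction from the scaling identity $W(\varphi f_1,\ldots,\varphi f_k)=\varphi^k W(f_1,\ldots,f_k)$ rather than the paper's column operations, and you read off the asymptotics of $W_x(u_{s_1},u_{s_i})$ directly from the hypotheses instead of integrating $u_{s_1}u_{s_i}$ over $(x,\infty)$.
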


\smallskip

Once these Wronskian signs are known, the following standard criterion turns them into signs of determinants evaluated at ordered $x$-values. Its proof rescales the functions so that all their initial Wronskians are strictly positive and then applies the Wronskian criterion for positive extended complete Tchebycheff systems; see \citet[Theorem~1.1, p.~376]{KarlinStudden1966}.

\smallskip

\begin{lemma}[Passage to determinants at ordered points]\label{lem:Wronskian.to.collocation}
Let $I \subseteq \R$ be an open interval, let $f_1,\ldots,f_n:I \to \R$ be $(n - 1)$-times continuously differentiable, and suppose that $W_x(f_1,\ldots,f_k) \neq 0$ for all $x \in I$ and all $k = 1,\ldots,n$. Then, for all $x_1,\ldots,x_n \in I$ such that $x_1 < \cdots < x_n$, the determinant $\det[f_i(x_j)]_{i,j=1}^n$ is nonzero and has the same sign as $W_x(f_1,\ldots,f_n)$.
\end{lemma}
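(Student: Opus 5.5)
We reduce to the positive-Wronskian case by a sign flip and then invoke the ECT criterion of \citet[Theorem~1.1, p.~376]{KarlinStudden1966}. That criterion says: if $W_x(g_1,\ldots,g_k)>0$ on $I$ for all $k\le n$, then $\det[g_i(x_j)]_{i,j=1}^n>0$ whenever $x_1<\cdots<x_n$ in $I$.

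First we fix the signs. Each $W_x(f_1,\ldots,f_k)$ is continuous in $x$ and never vanishes on the connected interval $I$, so it has a constant sign $\epsilon_k\in\{\pm1\}$. Set $\epsilon_0\leqdef 1$ and $\delta_k\leqdef\epsilon_k\epsilon_{k-1}$, and define $g_k\leqdef\delta_k f_k$. Wronskians are multilinear in the rows, so
\[
W_x(g_1,\ldots,g_k)=\Big(\prod_{i=1}^k\delta_i\Big)W_x(f_1,\ldots,f_k)=\epsilon_k\,W_x(f_1,\ldots,f_k),
\]
because the product telescopes to $\epsilon_k\epsilon_0=\epsilon_k$. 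Hence $W_x(g_1,\ldots,g_k)>0$ on $I$ for every $k=1,\ldots,n$. This is the rescaling mentioned before the statement.

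Next we apply the criterion. The positivity of all the initial Wronskians makes $(g_1,\ldots,g_n)$ an extended complete Tchebycheff system on $I$, so $\det[g_i(x_j)]_{i,j=1}^n>0$ for $x_1<\cdots<x_n$. Undoing the row scaling gives
\[
\det[f_i(x_j)]=\Big(\prod_{i=1}^n\delta_i\Big)\det[g_i(x_j)]=\epsilon_n\det[g_i(x_j)].
\]
This is nonzero with sign $\epsilon_n=\operatorname{sgn}W_x(f_1,\ldots,f_n)$, as claimed.

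The one delicate point is the exact hypotheses of the cited theorem. Some formulations require $C^{n}$ rather than $C^{n-1}$, or state only the nonnegativity of the determinants. If only $\ge0$ is available, strictness follows from a zero-counting argument. Suppose $\det[g_i(x_j)]=0$. Then some nontrivial combination $\sum c_ig_i$ vanishes at $n$ distinct points. The standard Pólya factorization writes $\sum c_i g_i=w_1\int w_2\int\cdots$, with $w_k$ built from ratios $W_k W_{k-2}/W_{k-1}^2$ of consecutive Wronskians, which are positive. Applying Rolle's theorem repeatedly shows that such a combination has at most $n-1$ zeros counting multiplicity, a contradiction. The factorization needs only $(n-1)$-fold continuous differentiability for the Wronskians up to order $n$, which matches the hypothesis on the $f_i$.
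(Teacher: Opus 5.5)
Your proof is correct and matches the paper's argument step for step: the same sign normalization $g_k=\epsilon_{k-1}\epsilon_k f_k$ with the telescoping product $\epsilon_k$, the Karlin--Studden ECT criterion, and then undoing the row scaling to get the factor $\epsilon_n$. The only difference is that the paper first restricts to a compact $[a,b]\subseteq I$ with $a<x_1<\cdots<x_n<b$, because that criterion is stated on closed intervals. Your closing Rolle/P\'olya fallback is not needed: the cited theorem (for $n+1$ functions $u_0,\ldots,u_n\in C^n$) already gives strict positivity under exactly the $C^{n-1}$ hypothesis for $n$ functions.
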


To avoid interrupting the flow of the proof of the sufficiency portion of Theorem~\ref{thm:conformal.block.threshold}, the proofs of Lemmas~\ref{lem:Darboux.Wronskians} and~\ref{lem:Wronskian.to.collocation} are also deferred to Appendix~\ref{app:Wronskian.tools}. In order to apply Lemma~\ref{lem:Darboux.Wronskians} to \eqref{eq:Schrodinger.equation} with $V(x) = -1/(4\sinh^2x)$, first we verify by \eqref{eq:us.asymptotic} and \eqref{eq:us.derivative.asymptotic} that the hypotheses \eqref{eq:behavior.infinity} of Lemma~\ref{lem:Darboux.Wronskians} are satisfied, with $a_i = c_{s_i}$ as defined in \eqref{eq:c:s:constant}. Then it follows that, for $0 \leq s_1 < \cdots < s_n$,
\[
\operatorname{sgn}W_x(u_{s_1},\ldots,u_{s_n}) = (-1)^{n(n - 1)/2}.
\]
Lemma~\ref{lem:Wronskian.to.collocation} now yields
\begin{equation}\label{eq:u.collocation.sign}
\operatorname{sgn}\det\!\big[u_{s_i}(x_j)\big]_{i,j=1}^n = (-1)^{n(n - 1)/2}, \qquad 0 < x_1 < \cdots < x_n.
\end{equation}

Let $1/2 \leq \Delta_1 < \cdots < \Delta_n$ and $0 < z_1 < \cdots < z_n < 1$, and let $s_i = \Delta_i - \tfrac{1}{2}$ and $z_j = \operatorname{sech}^2(x_j/2)$, $i,j=1,\ldots,n$. Then $x_1 > \cdots > x_n > 0$ and, by \eqref{eq:G.us},
\[
G_{\Delta_i}(z_j) = \frac{2}{B(\Delta_i,\Delta_i)} \: (\sinh x_j)^{-1/2} \: u_{s_i}(x_j).
\]
Reversing the $n$ columns places the $x$-values in strictly increasing order, and the sign contributed by this reversal is $(-1)^{n(n - 1)/2}$. Therefore, \eqref{eq:u.collocation.sign} yields
\[
\begin{aligned}
\det\!\big[G_{\Delta_i}(z_j)\big]_{i,j=1}^n
&= \left(\prod_{i=1}^n \frac{2}{B(\Delta_i,\Delta_i)}\right)\left(\prod_{j=1}^n (\sinh x_j)^{-1/2}\right) \det\!\big[u_{s_i}(x_j)\big]_{i,j=1}^n \\
&= \left(\prod_{i=1}^n \frac{2}{B(\Delta_i,\Delta_i)}\right)\left(\prod_{j=1}^n (\sinh x_j)^{-1/2}\right) (-1)^{n(n - 1)/2} \: \det\!\big[u_{s_i}(x_{n + 1 - j})\big]_{i,j=1}^n > 0.
\end{aligned}
\]
Since $n$ was chosen arbitrarily, this proves that the kernel $(\Delta,z) \mapsto G_{\Delta}(z)$ is $\mathrm{STP}_{\infty}$ on the region $[1/2,\infty) \times (0,1)$. This completes the proof of the sufficiency portion of Theorem~\ref{thm:conformal.block.threshold}.
$\qed$

\subsection{Proof of the sharpness assertion in Theorem~\ref{thm:conformal.block.threshold}}\label{subsec:threshold.sharpness}

Let $\tau \in (0,1/2)$ and $z_0 \in [0,1)$ be fixed. The goal is to construct a strictly negative minor of the conformal-block kernel $G_{\Delta}(z)$ with all its $\Delta$-values in $(\tau,1/2)$ and all its $z$-values in $(z_0,1)$.

The proof is organized in four steps. \ref{Step:1} reduces the construction of a strictly negative ordinary minor of the function $q(\Delta,y)$ in \eqref{eq:q.definition} to the construction of a strictly negative determinant of mixed partial derivatives of $q(\Delta,y)$. \ref{Step:2} reduces the large-$y$ sign of the determinant of those mixed partial derivatives to the sign of a matrix formed from derivatives of the digamma function. \ref{Step:3} derives both a parameter value $\Delta_0 \in (\tau,1/2)$ and an order for which the corresponding signed determinant is strictly negative. Finally, \ref{Step:4} separates the coincident row and column parameters and returns to the original conformal-block kernel.

To ensure that the basic strategy of the proof is not obscured by the technical details, the proofs of the central lemmas used in these steps (Lemmas~\ref{lem:confluent.Taylor}, \ref{lem:q.boundary.expansion}, \ref{lem:confluent.boundary.asymptotic}, and \ref{lem:H.sign.obstruction}), alongside other auxiliary details, are deferred to Appendix~\ref{app:boundary.calculations}. This also isolates the intricate Wronskian and determinant calculations, thereby maintaining focus on the core argument.

\bigskip
\noindent
{\bf \setword{Step~1}{Step:1}: Reduction to a determinant of mixed derivatives.}

\smallskip

Define the function
\begin{equation}\label{eq:q.definition}
q(\Delta,y) \leqdef B(\Delta,\Delta)G_{\Delta}(1 - e^{-y}), \qquad \Delta > 0, \quad y > 0.
\end{equation}
By Lemmas~\ref{lem:scaling.functions} and~\ref{lem:increasing.transformation}, the strict positivity of $B(\Delta,\Delta)$ and the strictly increasing map $y \mapsto 1 - e^{-y}$ preserve the signs of the kernel determinants. Lemma~\ref{lem:q.boundary.expansion} below also shows that $q(\Delta,y)$ is infinitely differentiable on $(0,\infty) \times (0,\infty)$.

Henceforth, we denote the partial differential operators $\partial/\partial\Delta$ and $\partial/\partial y$ by $\partial_{\Delta}$ and $\partial_y$, respectively. For $N \in \N$, define the determinant of mixed partial derivatives:
\[
D_N(\Delta,y) \leqdef \det\!\left[\partial_{\Delta}^i\partial_y^j q(\Delta,y)\right]_{i,j=0}^{N - 1}.
\]
Fix $x_1 < \cdots < x_N$ and $w_1 < \cdots < w_N$, and define
\begin{equation}\label{eq:M.V}
M \leqdef N(N - 1)/2, \qquad V(t_1,\ldots,t_N) \leqdef \prod_{1\leq i<j\leq N}(t_j - t_i).
\end{equation}
By Lemma~\ref{lem:confluent.Taylor},
\[
\lim_{\varepsilon \to 0^{+}, \, \eta \to 0^{+}} \frac{\det\!\big[q(\Delta + \varepsilon x_r,y + \eta w_s)\big]_{r,s=1}^N}{\varepsilon^M\eta^M V(x_1,\ldots,x_N)V(w_1,\ldots,w_N)} = \frac{D_N(\Delta,y)}{\big(\prod_{j=0}^{N - 1}j!\big)^2}.
\]
Each factor in the denominator on the left-hand side is strictly positive, as is the factorial factor on the right. The limit therefore has the same sign as $D_N(\Delta,y)$. If $D_N(\Delta,y) < 0$, then $\det[q(\Delta + \varepsilon x_r,y + \eta w_s)]_{r,s=1}^N < 0$ for all sufficiently small strictly positive $\varepsilon$ and $\eta$. The remaining task is consequently to find an odd integer $N$, a point $\Delta \in (\tau,1/2)$, and arbitrarily large values of $y$ for which $D_N(\Delta,y) < 0$.

\bigskip
\noindent
{\bf \setword{Step~2}{Step:2}: Large-$\boldsymbol{y}$ asymptotics of the determinant of mixed derivatives.}

\smallskip

The relation $z = 1 - e^{-y}$ shows that $z \uparrow 1$ is equivalent to $y \to \infty$. Define
\begin{equation}\label{eq:h.definition}
\lambda \leqdef \Delta(\Delta - 1), \qquad h(\Delta) \leqdef -2\gamma - 2\psi(\Delta).
\end{equation}
The following lemma provides a convergent series representation for $q(\Delta,y)$. In particular, for every $(i,j)$ such that $0 \leq i,j \leq N - 1$, the $(i,j)$th entry of the matrix defining $D_N(\Delta,y)$ is obtained by applying the mixed derivative $\partial_{\Delta}^i\partial_y^j$ term by term to this series.

\begin{lemma}[Series representation of $q$ near $z = 1$]\label{lem:q.boundary.expansion}
There exist sequences of real polynomials $(P_k)_{k \in \N_0}$ and $(S_k)_{k \in \N_0}$ such that $S_0 = 0$, $P_k$ has degree $k$ and leading coefficient $(k!)^{-2}$ for every $k \in \N_0$, $S_k$ has degree at most $k$ for every $k \geq 1$, and
\begin{equation}\label{eq:q.mode.expansion}
q(\Delta,y) = \sum_{k=0}^{\infty} e^{-ky} [P_k(\lambda)y + Q_k(\Delta)], \qquad \Delta > 0, \quad y > 0,
\end{equation}
where, for $k \in \N_0$,
\[
Q_k(\Delta) \leqdef P_k(\lambda) h(\Delta) + S_k(\lambda).
\]
For every compact set $\mathcal{C} \subseteq (0,\infty)$, every $y_{\star} > 0$, and every fixed $a,b \in \N_0$, the series obtained by applying $\partial_{\Delta}^a\partial_y^b$ to the right-hand side term by term converges uniformly for $\Delta \in \mathcal{C}$ and $y \geq y_{\star}$ and equals $\partial_{\Delta}^a\partial_y^b q(\Delta,y)$. Moreover, for every $R \in (0,1)$ and every fixed $a \in \N_0$, the series
\[
\sum_{k=0}^{\infty} \partial_{\Delta}^a P_k(\lambda)t^k, \qquad \sum_{k=0}^{\infty}\partial_{\Delta}^a Q_k(\Delta)t^k
\]
both converge absolutely and uniformly for $\Delta \in \mathcal{C}$ and $t \in \mathbb{C}$ with $|t| \leq R$.
\end{lemma}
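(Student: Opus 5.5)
The plan is to obtain \eqref{eq:q.mode.expansion} as a Frobenius expansion at the regular singular point $z=1$ of the Casimir (hypergeometric) equation satisfied by the conformal block. Then identify the particular solution $q$ through the zero-balanced expansion of ${}_2F_1$ near $z=1$.

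\textbf{Step 1 (the ODE).} The function $G_{\Delta}(z)$ satisfies the one-dimensional Casimir equation
\[
z^2(1-z)G'' - z^2 G' = \lambda G, \qquad \lambda = \Delta(\Delta - 1),
\]
and so does $q(\Delta,\cdot)$, since $B(\Delta,\Delta)$ is constant in $z$. This follows from the hypergeometric equation for ${}_2F_1(\Delta,\Delta;2\Delta;z)$ after conjugating by $z^{\Delta}$; it is a routine check. Set $t = e^{-y} = 1-z$. The equation becomes a second-order ODE in $t$ with a regular singular point at $t=0$ and the next singularity at $t=1$. Its coefficients are analytic in $t$, and the parameter enters only through $\lambda$. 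The indicial equation at $t=0$ has the double root $0$.

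Frobenius theory then gives two solutions on $|t|<1$:
\[
\phi_1 = \sum_{k \ge 0} P_k(\lambda)t^k, \qquad P_0 = 1,
\]
\[
\phi_2 = \phi_1 \cdot y + \sum_{k\ge 1} S_k(\lambda)t^k, \qquad S_0 = 0.
\]
Here $\log(1/t) = y$. Both recursions have the form $k^2 P_k = \lambda P_{k-1} + (\text{terms in } P_{k-1},\dots,P_0 \text{ with coefficients polynomial in } k)$, and similarly for $S_k$ with an inhomogeneous term built from the $P_j$. Induction on $k$ gives that $P_k$ and $S_k$ are polynomials in $\lambda$, that $\deg P_k = k$ with leading coefficient $(k!)^{-2}$ (from $k^2 P_k \approx \lambda P_{k-1}$), and that $\deg S_k \le k$. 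As a check, expanding directly gives $P_1(\lambda) = \Delta^2 - \Delta = \lambda$.

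\textbf{Step 2 (identifying $q$).} Use the zero-balanced expansion (DLMF 15.8.10 with $c = a+b$):
\[
B(\Delta,\Delta)\,{}_2F_1(\Delta,\Delta;2\Delta;z) = \sum_{k\ge0}\frac{(\Delta)_k^2}{(k!)^2}\big[2\psi(k+1) - 2\psi(\Delta+k) + y\big]e^{-ky}.
\]
Multiply this by $z^{\Delta} = (1-e^{-y})^{\Delta}$, expanded by the binomial series. This exhibits $q$ as a series $\sum_k e^{-ky}[\alpha_k y + \beta_k]$ whose leading terms are $\alpha_0 = 1$ and $\beta_0 = 2\psi(1) - 2\psi(\Delta) = h(\Delta)$. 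Since $q$ solves the ODE, uniqueness of Frobenius solutions forces
\[
q = \phi_2 + h(\Delta)\,\phi_1,
\]
which gives $Q_k = P_k(\lambda)h(\Delta) + S_k(\lambda)$.

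\textbf{Step 3 (convergence).} The expected main obstacle is establishing uniform convergence together with term-by-term differentiation in $\Delta$. The plan is to complexify: the Frobenius series converge for $|t|<1$ locally uniformly in $\lambda \in \mathbb{C}$, by majorant estimates on the recursion. The function $h$ is holomorphic near any compact $\mathcal{C} \subset (0,\infty)$. Hence $\sum_k P_k(\lambda(\Delta))t^k$ and $\sum_k Q_k(\Delta)t^k$ are holomorphic in $(\Delta,t)$ on a complex neighbourhood of $\mathcal{C} \times \{|t|\le R\}$.

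Cauchy estimates in $\Delta$ then give absolute and uniform convergence of the $\partial_\Delta^a$-differentiated coefficient series for $|t| \le R$. For $\partial_y^b$, each $y$-derivative of $e^{-ky}(P_k y + Q_k)$ contributes at most polynomial factors in $k$ and $y$. For $y \ge y_\star$ these are dominated by $C\,k^b (1+y)\,e^{-ky}$ with $e^{-y_\star} < 1$, which is summable uniformly. The Weierstrass M-test then justifies term-by-term differentiation and identifies the limit with $\partial_\Delta^a\partial_y^b q$. This also yields the $C^\infty$ smoothness of $q$ used in Step~1 of the sharpness proof.
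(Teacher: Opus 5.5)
Your proposal is correct, but it runs the paper's argument in the opposite order and bounds the $\Delta$-derivatives differently.

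\textbf{The paper's order.} The paper starts from the zero-balanced expansion. It defines $\widetilde P_k,\widetilde Q_k$ as the Taylor coefficients of the explicit functions $\mathcal{P}_{\Delta}(t)=(1-t)^{\Delta}{}_2F_1(\Delta,\Delta;1;t)$ and $\mathcal{Q}_{\Delta}(t)$. It proves convergence and term-by-term differentiability from real-variable gamma-ratio and polygamma bounds on $\partial_{\Delta}^a$ of the hypergeometric coefficients, followed by Cauchy's estimate in $t$. Only then does it substitute the full series into $\partial_y^2q=\lambda e^{-y}(1-e^{-y})^{-2}q$ and equate coefficients of $ye^{-ky}$ and $e^{-ky}$, obtaining the recurrences that give the polynomial structure.

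\textbf{Your order.} You build the Frobenius basis first, so the polynomial dependence on $\lambda$ is automatic. The zero-balanced expansion is then needed only for two constants, $\lim_{y\to\infty}q/y=1$ and $\lim_{y\to\infty}(q-y)=h(\Delta)$. The $\Delta$-derivative bounds come from holomorphy in $\Delta$ and Cauchy estimates, which bypasses the asymptotics of derivatives of gamma ratios and digamma differences. The cost is that you must supply convergence of the Frobenius series on $|t|<1$ uniformly in the parameter, rather than inheriting it from an explicit series.

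\textbf{Two points to tighten.} First, the majorant. Clearing denominators in the $t$-equation gives the recursion you describe,
$k^2P_k=(\lambda+2(k-1)^2)P_{k-1}-(k-2)^2P_{k-2}$, which has mixed signs and is awkward for a majorant. Use instead the paper's form $k^2P_k=\lambda\sum_{j<k}(k-j)P_j$, together with $k^2S_k=2kP_k+\lambda\sum_{j<k}(k-j)S_j$. Both have nonnegative coefficients, so $|P_k(\lambda)|\le P_k(|\lambda|)$ and $|S_k(\lambda)|\le S_k(|\lambda|)$. This reduces the uniform bound to a single real value $\Lambda=\Delta(\Delta-1)\ge0$, where ordinary Frobenius theory (or the explicit series $\mathcal{P}_{\Delta}$ and $\mathcal{Q}_{\Delta}-h\mathcal{P}_{\Delta}$) applies.

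Second, your domination $Ck^b(1+y)e^{-ky}$ in Step~3 silently drops the growth of the coefficients. It should read $C(k+1)^b\rho^{-k}(1+y)e^{-ky}$, using the bound $|\partial_{\Delta}^aP_k|+|\partial_{\Delta}^aQ_k|\le C\rho^{-k}$ that your Cauchy estimates already give. Choose $\rho\in(e^{-y_{\star}},1)$; then, since $(1+y)e^{-ky}\le(1+y_{\star})e^{-ky_{\star}}$ for $k\ge1$ and $y\ge y_{\star}$, the terms with $k\ge1$ are uniformly summable on $y\ge y_{\star}$, as in the paper.
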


For $0 < \Delta < 1/2$, define
\begin{equation}\label{eq:xi.c.definition}
\xi \leqdef \Big(\frac{1}{2} - \Delta\Big)^2, \qquad c(\xi) \leqdef -2\gamma - 2\psi\Big(\frac{1}{2} - \xi^{1/2}\Big),
\end{equation}
and, for $m \in \N$, define the matrix
\begin{equation}\label{eq:H.m}
H_m(\xi) \leqdef \left[\frac{c^{(i + j)}(\xi)}{(i + j)!}\right]_{i,j=1}^m.
\end{equation}
The following lemma shows that the behavior of the dominant large-$y$ asymptotic term of $D_{2m+1}$ is governed by $\det H_m(\xi)$.

\begin{lemma}[Asymptotic reduction of $D_{2m+1}$ to $\det H_m(\xi)$]\label{lem:confluent.boundary.asymptotic}
Let $m \in \N$ and $0 < \Delta < 1/2$. Then, locally uniformly in $\Delta$ as $y \to \infty$,
\begin{equation}\label{eq:D.asymptotic}
D_{2m + 1}(\Delta,y) = C_m e^{-m^2y} \big[(2\Delta - 1)^{m(2m + 1)}y\det H_m(\xi) + O(1)\big],
\end{equation}
where $C_m > 0$ is a constant depending only on $m$.
\end{lemma}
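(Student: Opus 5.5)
Our plan is to substitute the series of Lemma~\ref{lem:q.boundary.expansion} into $D_{2m+1}$ and extract the leading exponential order by multilinearity. Write the rows as $R_i(y) = \partial_\Delta^i\big(\partial_y^j q\big)_{j}$. Each term of \eqref{eq:q.mode.expansion} contributes to column $j$ the quantity $e^{-ky}\big[(-k)^j(P_k y + Q_k) + j(-k)^{j-1}P_k\big]$. Thus each column is a superposition over the modes $k\in\N_0$ of vectors of the form $e^{-ky}(\text{affine in }y)$, with $\Delta$-derivatives applied row by row. We expand the determinant multilinearly in the $2m+1$ columns, or equivalently write the matrix as a product. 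The key is a Cauchy--Binet factorization: the $(i,j)$ entry is $\sum_k \partial_\Delta^i[e^{-ky}(P_k y + Q_k)]$ differentiated $j$ times in $y$. So the $y$-dependence factors through the confluent Vandermonde-type matrix in the exponents $-k$, where each mode $k$ carries two functions, $e^{-ky}$ and $ye^{-ky}$.

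Concretely, the space spanned by $\{e^{-ky}, ye^{-ky}\}$ has dimension $2$ per mode. Any $2m+1$ of these functions chosen with the smallest total decay uses modes $0,\ldots,m-1$ fully and one function from mode $m$. The resulting exponent is $2(0+1+\cdots+(m-1)) + m = m^2$, which matches $e^{-m^2y}$. The leading coefficient then splits into two contributions. When the extra mode-$m$ function is $ye^{-my}$, we get a term with factor $y$ and a coefficient determinant built from $\partial_\Delta^i P_k$ and $\partial_\Delta^i Q_k$. When it is $e^{-my}$, we get an $O(1)$ term. All other selections are $O(ye^{-(m^2+1)y})$. Uniformity in $\Delta$ on compacts follows from the uniform convergence in Lemma~\ref{lem:q.boundary.expansion}, which also gives summability of the Cauchy--Binet sum over higher selections. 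The $y$-side determinant (a confluent Wronskian of $e^{-ky},ye^{-ky}$ at a fixed $y$) is an explicit positive multiple of $e^{-m^2 y}$ times a constant depending only on $m$; we fold that constant into $C_m$.

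The main obstacle is identifying the $\Delta$-side coefficient, namely the $(2m+1)\times(2m+1)$ matrix with rows $i=0,\ldots,2m$ and columns $P_0,Q_0,\ldots,P_{m-1},Q_{m-1},P_m$ differentiated $i$ times in $\Delta$. We must show it equals a positive multiple of $(2\Delta-1)^{m(2m+1)}\det H_m(\xi)$. Since $Q_k = P_k h + S_k$ and $S_k$ has degree at most $k$ in $\lambda$, we plan to reduce it in the following order.
\begin{enumerate}
\item Subtract from the $Q_k$ columns lower-degree combinations of $P_0,\ldots,P_k$. Because $\deg P_k = k$ with leading coefficient $(k!)^{-2}$, the span of $P_0,\ldots,P_k$ is all polynomials of degree at most $k$, so the $S_k$ parts can be removed. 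This reduces the columns to $\lambda^k$ and $\lambda^k h(\Delta)$ for $k<m$, together with $\lambda^m$, up to the factor $\prod (k!)^{-4}$.
\item Change variables from $\Delta$ to $\xi$. This is possible because $\lambda = \xi - 1/4$ and $h(\Delta) = c(\xi)$ on $(0,1/2)$. The Wronskian in $\Delta$ of these $2m+1$ functions equals $(\rd\xi/\rd\Delta)^{M}$ times the Wronskian in $\xi$, where $M = m(2m+1)$ and $\rd\xi/\rd\Delta = 2\Delta - 1$; this produces the factor $(2\Delta-1)^{m(2m+1)}$. Replacing the columns $\lambda^k$ by $\xi^k$ is a unimodular triangular change.
\item Compute $W_\xi(1,\xi,\ldots,\xi^m, c, \xi c,\ldots,\xi^{m-1}c)$. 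Column operations that kill the polynomial parts via Taylor expansion at $\xi$, followed by the Leibniz rule, should reduce it to a constant times the Hankel determinant $\det[c^{(i+j)}/(i+j)!]_{i,j=1}^m$. This is the classical identity relating Wronskians of $\{\xi^k\}\cup\{\xi^k c\}$ to Hankel determinants of Taylor coefficients.
\end{enumerate}
Tracking the exact sign and positivity of the constant through these operations is the delicate part. We would verify it first in the case $m=1$.
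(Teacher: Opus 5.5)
Your plan follows the paper's route step for step. Both use a Cauchy--Binet expansion over the modes $A_r(\Delta)B_r(y)$. Total exponential degree $m^2$ is attained only by the index sets $\{0,\ldots,2m\}$ and $\{0,\ldots,2m-1,2m+1\}$, and only the first produces a factor $y$. Then come the triangular reduction of $(P_0,Q_0,\ldots,P_{m-1},Q_{m-1},P_m)$ to $(1,h,\lambda,\lambda h,\ldots,\lambda^m)$, the Wronskian chain rule giving $(2\Delta-1)^{m(2m+1)}$, and the reduction of the $\xi$-Wronskian to $\det H_m$.

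The genuine gap is the sign of $C_m$, which is exactly what \eqref{eq:sign.relations} relies on. You leave it open (``we would verify it first in the case $m=1$''), and a check at $m=1$ cannot decide the sign for all $m$. Your one sign assertion, that the $y$-side factor is a positive multiple of $e^{-m^2y}$, cannot be made independently of the $\Delta$-side. Both Wronskians change sign under the same permutation of the selected modes, so they must be computed in the same order. The assertion holds in the order $(1,y,\ldots,e^{-(m-1)y},ye^{-(m-1)y},ye^{-my})$. It fails in the grouped order you use in step~3: matching $(1,\xi,\ldots,\xi^m,c,\ldots,\xi^{m-1}c)$ with $(y,ye^{-y},\ldots,ye^{-my},1,\ldots,e^{-(m-1)y})$, the coefficient of $ye^{-m^2y}$ is $(-1)^{m(m+1)/2+m}b_m$, which is negative for $m=2$.

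The paper closes this gap by fixing the interlaced order and computing both signs exactly.
\begin{itemize}
\item In Lemma~\ref{lem:mode.Wronskian}, the confluent Vandermonde determinant with exponents $\alpha_r=-r$, in the order $(E_{\alpha_r},\partial_{\alpha_r}E_{\alpha_r})$, involves only even powers of differences, so it is positive. Reversing the $m$ pairs gives $(-1)^m b_m\,ye^{-m^2y}+O(e^{-m^2y})$. The slope is read off after subtracting, for $r<m$, $y$ times each $e^{-ry}$ row from the $ye^{-ry}$ row.
\item In Lemma~\ref{lem:interlaced.Wronskian}, translate to $0$, transpose, and divide row $r$ by $r!$; then each column $\xi^k$ has a single entry $1$ in row $k$. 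Laplace expansion along these columns has sign $(-1)^{m(m+1)/2+m(m+1)}$, and reversing the $m$ remaining columns contributes $(-1)^{m(m-1)/2}$. Hence $W_\xi(1,c,\xi,\xi c,\ldots,\xi^m)=(-1)^m\prod_{r=0}^{2m}r!\,\det H_m(\xi)$.
\end{itemize}
The two factors $(-1)^m$ cancel, giving $C_m=\kappa_m b_m\prod_{r=0}^{2m}r!>0$ with $\kappa_m=(m!)^{-2}\prod_{k<m}(k!)^{-4}$. Your triangular factor omits the $(m!)^{-2}$ from $P_m$, which is harmless since it is positive.

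Two further small corrections. The selections of higher degree contribute $O((y+1)^{2m+1}e^{-(m^2+1)y})$ rather than $O(ye^{-(m^2+1)y})$, since several unpaired $ye^{-ky}$ may occur. Also, the $y$-Wronskian of the first selection is $e^{-m^2y}$ times an affine function of $y$, and its constant part must also be absorbed into the $O(1)$.
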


If $\det H_m(\xi) \neq 0$ and $0 < \Delta < 1/2$, then Lemma~\ref{lem:confluent.boundary.asymptotic} shows that, for all sufficiently large $y$,
\begin{equation}\label{eq:sign.relations}
\operatorname{sgn}\big(D_{2m + 1}(\Delta,y)\big) = \operatorname{sgn}\big((2\Delta - 1)^{m(2m + 1)}\det H_m(\xi)\big) = \operatorname{sgn}\big((-1)^m\det H_m(\xi)\big).
\end{equation}
Hence, $D_{2m + 1}(\Delta_0,y)$ is strictly negative for all sufficiently large $y$ as soon as a point $\Delta_0 \in (\tau,1/2)$ and an integer $m \in \N$ are found such that $(-1)^m\det H_m((\tfrac{1}{2} - \Delta_0)^2) < 0$.

\bigskip
\noindent
{\bf \setword{Step~3}{Step:3}: Choice of $\boldsymbol{\Delta_0}$ and the order of the determinant.}

\smallskip

Since a suitable $m$ depends on the chosen point $\Delta_0$, the order $2m + 1$ cannot be fixed in advance. The next lemma determines both $\Delta_0$ and $m$ simultaneously.

\begin{lemma}[Existence of a strictly negative value for $(-1)^m \det H_m(\xi)$]\label{lem:H.sign.obstruction}
For any nonempty open interval $I$ with closure in $(0,1/2)$, there exist $\Delta_0 \in I$ and $m \in \N$ such that
\begin{equation}\label{eq:H.negative.sign}
(-1)^m\det H_m(\xi_0) < 0,
\end{equation}
where $\xi_0 \leqdef (\tfrac{1}{2} - \Delta_0)^2$.
\end{lemma}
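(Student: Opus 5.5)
The plan is to argue by contradiction with a moment argument. The structure to exploit is that the entries of $H_m(\xi)$ are the Taylor coefficients $a_k(\xi)\leqdef c^{(k)}(\xi)/k!$, $k\ge 2$, of $c$ at $\xi$. So $H_m(\xi)=[a_{i+j}(\xi)]_{i,j=1}^m$ is the $m\times m$ Hankel matrix of the shifted sequence $(a_{k+2})_{k\ge 0}$. Negating every entry gives
\[
\det[-a_{i+j}(\xi)]_{i,j=1}^m=(-1)^m\det H_m(\xi).
\]
Suppose then that \eqref{eq:H.negative.sign} fails for every $\Delta_0\in I$ and every $m$. Put $J\leqdef\{(\tfrac12-\Delta)^2:\Delta\in I\}$, an open interval with closure in $(0,1/4)$. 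The assumption says that, for every $\xi\in J$, all leading Hankel determinants of $b_k(\xi)\leqdef -a_{k+2}(\xi)$, $k\ge0$, are nonnegative.

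\textbf{Step 1 (a point with strictly positive determinants).} The function $c$ is real-analytic on $(0,1/4)$, since $\psi$ is analytic at $\tfrac12-\xi^{1/2}\in(0,1/2)$. Hence each $\det H_m$ is real-analytic on $J$, and either $\det H_m\equiv0$ on $J$ or its zeros in $J$ are isolated. If no $\det H_m$ vanishes identically, the union over $m$ of their zero sets is countable. Choosing $\xi_0\in J$ outside this set makes every leading Hankel determinant of $(b_k(\xi_0))$ strictly positive. The Hamburger moment theorem then produces a positive measure $\mu$ on $\R$ with $b_k(\xi_0)=\int x^k\,\rd\mu(x)$ for all $k$. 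Because $c$ has a positive radius of convergence $\rho$ at $\xi_0$, we get $|b_k|\le C\rho'^{-k}$ for every $\rho'<\rho$. This forces $\operatorname{supp}\mu\subseteq[-1/\rho,1/\rho]$, which is compact.

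The degenerate case is the one I expect to be the main obstacle: some $\det H_m\equiv0$ on $J$ while the lower-order ones are nonnegative. Take the smallest such $m$. I would differentiate in $\xi$, using $a_k'(\xi)=(k+1)a_{k+1}(\xi)$, and combine this with the Desnanot--Jacobi identity for the $(m+1)\times(m+1)$ Hankel matrices. The aim is to show that the $m\times m$ Hankel minors at every shift vanish identically. By Kronecker's theorem the generating function $\sum_k b_k t^k$ would then be rational, so $c$ would be a rational function of $\xi$. That is false, because $c$ has infinitely many poles (Step 3). If this route resists, the fallback is to keep only the strictly positive minors up to order $m-1$. These give a finitely supported $\mu$ that reproduces $b_0,\dots,b_{2m-2}$, and the same singularity comparison could then be pushed through.

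\textbf{Step 2 (analytic continuation from the measure).} Given $\mu$, we have for small $|t|$
\[
c(\xi_0+t)=c(\xi_0)+c'(\xi_0)t-t^2\int\frac{\rd\mu(x)}{1-xt}.
\]
The right-hand side extends holomorphically to all $t\in\mathbb C$ except the compact real set $\{1/x:x\in\operatorname{supp}\mu\}$, with the possible pole of $1/(1-xt)$ at $t=\infty$ removed. In particular it is holomorphic on $\{|t|>1/\rho''\}$ for suitable $\rho''$ except for growth at most $O(|t|^2)$ there, and it is holomorphic on $\mathbb C\setminus\R$.

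\textbf{Step 3 (contradiction with the singularities of $c$).} Write
\[
\psi\big(\tfrac12-\sqrt\xi\big)=\psi\big(\tfrac12+\sqrt\xi\big)+\pi\tan(\pi\sqrt\xi).
\]
Then $\xi\mapsto c(\xi)$, continued along the real axis from $\xi_0$, has simple poles at $\xi=(n+\tfrac12)^2$, $n\in\N_0$, which accumulate at $+\infty$. These come from the poles of $\psi$ at $\tfrac12-\sqrt\xi=-n$. The continuation of Step 2 is single-valued and holomorphic near every real point $\xi_0+t$ with $|t|$ large. It therefore cannot have these infinitely many poles tending to infinity, which is the desired contradiction. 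An alternative, or a supplement, is to use the branch point at $\xi=0$ together with the Nevanlinna sign condition on $\operatorname{Im}\int\rd\mu(x)/(w-x)$. Hence for some $\Delta_0\in I$ and some $m$ we must have $(-1)^m\det H_m(\xi_0)<0$, which is \eqref{eq:H.negative.sign}.
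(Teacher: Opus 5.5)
Your Steps~1--2 are fine as far as they go. Under the contradiction hypothesis at a generic $\xi_0$, Hamburger's theorem and the coefficient bound give a compactly supported $\mu\ge0$ with $c(\xi_0+t)=c(\xi_0)+c'(\xi_0)t-t^2\int\rd\mu(x)/(1-xt)$ near $t=0$. Step~3, however, does not produce a contradiction. The singular set of $t\mapsto\int\rd\mu(x)/(1-xt)$ is $\{1/x: x\in\operatorname{supp}\mu,\,x\neq0\}$. Compact support keeps this set \emph{outside} a disk about $t=0$, not inside one, and the set is unbounded whenever $0\in\operatorname{supp}\mu$. So your claims that this set is compact and that the continuation is holomorphic near every real $t$ with $|t|$ large are false in general. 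For example, a positive measure $\sum_n w_n\delta_{1/T_n}$ with $T_n=(n+\tfrac12)^2-\xi_0$ produces exactly a sequence of simple poles at $T_n\to\infty$. In fact, since $c$ has a nonzero jump across $\xi<0$, any such $\mu$ would have to contain $[-1/\xi_0,0]$ in its support. The branch cut does not help either: $\operatorname{Im}c(\xi+\ii0)=\pi\tanh(\pi(-\xi)^{1/2})>0$ for $\xi<0$ is precisely what a positive $\mu$ produces there. The missing idea is a \emph{sign} comparison at the first pole. Your Step~2 function and $c(\xi_0+\cdot)$ are both holomorphic on the upper half-plane and agree near $0$, so they agree there. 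With $T=\tfrac14-\xi_0$, dominated convergence gives $\lim_{v\downarrow0}v\operatorname{Im}\bigl[-(T+\ii v)^2\int\rd\mu(x)/(1-x(T+\ii v))\bigr]=-T^3\mu(\{1/T\})\le0$, while $c(\tfrac14+\ii v)\sim2\ii/v$ gives the limit $2>0$. This is the paper's obstruction in other clothing: the atom of mass $2$ at $t=\tfrac14$ of the Herglotz measure in Lemma~\ref{lem:c.derivative.measure} becomes the isolated atom $\sigma(\{u_0\})=2u_0^3>0$, which contradicts $\int R^2\,\rd\sigma<0$. Note also that your reflection formula has the wrong sign; it should read $\psi(\tfrac12-s)=\psi(\tfrac12+s)-\pi\tan(\pi s)$, and this sign is exactly what the argument needs.

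The second gap is the degenerate case of Step~1, which you leave open. The Desnanot--Jacobi/Kronecker route is only sketched: the factors in $a_k'=(k+1)a_{k+1}$ do not obviously turn $\det H_m\equiv0$ into vanishing of all shifted Hankel minors. The fallback cannot work, because a finitely supported measure matching only $b_0,\dots,b_{2m-2}$ carries no information about the analytic continuation of $c$. The paper disposes of this case directly in Lemma~\ref{lem:H.nonzero}. From $c^{(r)}(\xi)/r!=\pi^2\binom{1/2}{r}\xi^{1/2-r}(1+O(\xi))$ as $\xi\downarrow0$ (for $r\ge2$), one gets $\det H_m(\xi)=\pi^{2m}\xi^{-m^2-m/2}\{\det[\binom{1/2}{i+j}]_{i,j=1}^m+o(1)\}$. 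That Hankel determinant is $(-1)^m$ times a Gram determinant for the weight $x^{1/2}(1-x)^{1/2}$ on $[0,1]$, hence nonzero. Real analyticity on $(0,1/4)$ then gives $\det H_m\not\equiv0$ on your interval $J$ as well. With that lemma and the residue-sign comparison above, your Hamburger-measure route becomes a valid and somewhat lighter alternative to the paper's. It needs only the analyticity of $c$ in the upper half-plane and the sign of its residue at $\xi=\tfrac14$, not the full identification of the Herglotz measure.
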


Choose a nonempty open interval $I$ whose closure is contained in $(\tau,1/2)$. By Lemma~\ref{lem:H.sign.obstruction}, there exist $\Delta_0 \in I$ and $m \in \N$ for which \eqref{eq:H.negative.sign} holds. Lemma~\ref{lem:H.sign.obstruction} and \eqref{eq:sign.relations} therefore imply
\begin{equation}\label{eq:negative.confluent.minor}
D_{2m + 1}(\Delta_0,y) < 0
\end{equation}
for all sufficiently large $y$.

\newpage
\noindent
{\bf \setword{Step~4}{Step:4}: Construction of a strictly negative conformal-block minor.}

\smallskip

By \eqref{eq:negative.confluent.minor}, we may choose $y_0$ so large that
\[
D_{2m + 1}(\Delta_0,y_0) < 0 \quad \text{and} \quad y_0 > -\log(1 - z_0).
\]
Let $N = 2m + 1$. It remains to use the inequality $D_N(\Delta_0,y_0) < 0$ to choose distinct row parameters $\Delta_1 < \cdots < \Delta_N$ and column parameters $y_1 < \cdots < y_N$ for which the kernel determinant $\det[q(\Delta_i,y_j)]_{i,j=1}^N$ is strictly negative. Fix real numbers $x_1 < \cdots < x_N$ and $w_1 < \cdots < w_N$. With $M$ and $V$ defined in \ref{Step:1}, Lemma~\ref{lem:confluent.Taylor}, applied to $q$, gives
\[
\det\!\big[q(\Delta_0 + \varepsilon x_r,y_0 + \eta w_s)\big]_{r,s=1}^N
= \frac{\varepsilon^M \eta^M V(x_1,\ldots,x_N) V(w_1,\ldots,w_N)}{\big(\prod_{j=0}^{N - 1}j!\big)^2}D_N(\Delta_0,y_0) + o(\varepsilon^M \eta^M).
\]
Since the ordered sequences yield strictly positive Vandermonde determinants and $D_N(\Delta_0,y_0) < 0$, then it follows that
\[
\det\!\big[q(\Delta_0 + \varepsilon x_r,y_0 + \eta w_s)\big]_{r,s=1}^N < 0
\]
for all sufficiently small strictly positive $\varepsilon$ and $\eta$. Now take $\varepsilon$ small enough that the strictly increasing values $\Delta_i \leqdef \Delta_0 + \varepsilon x_i$, $1 \leq i \leq N$, all belong to $I$. Also take $\eta$ small enough that the strictly increasing values $y_j \leqdef y_0 + \eta w_j$, $1 \leq j \leq N$, are all larger than $-\log(1 - z_0)$. Then the values $z_j \leqdef 1 - e^{-y_j}$ satisfy $z_0 < z_1 < \cdots < z_N < 1$. The inequality \eqref{eq:negative.confluent.minor} permits $y_0$ to be chosen arbitrarily large; after each such choice, $\eta$ can be chosen small enough to keep every $y_j$ close to $y_0$. Hence, all the values $z_j = 1 - e^{-y_j}$ can be made arbitrarily close to one. Finally, since $B(\Delta_i,\Delta_i) > 0$, \eqref{eq:q.definition} yields
\[
\det\!\big[G_{\Delta_i}(z_j)\big]_{i,j=1}^{2m + 1} = \left(\prod_{i=1}^{2m + 1} \frac{1}{B(\Delta_i,\Delta_i)}\right)\det\!\big[q(\Delta_i,y_j)\big]_{i,j=1}^{2m + 1} < 0,
\]
with every $\Delta_i \in I \subseteq (\tau,1/2)$ and every $z_j \in (z_0,1)$, which proves \eqref{eq:sharp.det.neg}. This completes the proof of the sharpness assertion in Theorem~\ref{thm:conformal.block.threshold}.
$\qed$

\appendix

\begin{appendices}

\section{Proofs of the lemmas used in the proof of the sufficiency portion of Theorem~\ref{thm:conformal.block.threshold}}\label{app:Wronskian.tools}

\subsection{Proof of Lemma~\ref{lem:us.properties}}

Let $s\geq 0$. It follows from the Legendre differential equation (\citet[Eq.~14.2.1]{Dunster2010}) that the function $Q_{s - \frac{1}{2}}(y)$ satisfies the equation
\[
(1 - y^2)Q_{s - \frac{1}{2}}''(y) - 2y Q_{s - \frac{1}{2}}'(y) + \left(s^2 - \frac{1}{4}\right)Q_{s - \frac{1}{2}}(y) = 0, \qquad y > 1.
\]
Defining $\widetilde{Q}_s(x) \leqdef Q_{s - \frac{1}{2}}(\cosh x)$, we have
\[
\begin{aligned}
\widetilde{Q}_s'(x) &= Q_{s - \frac{1}{2}}'(\cosh x)\sinh x, \\
\widetilde{Q}_s''(x) &= Q_{s - \frac{1}{2}}''(\cosh x)\sinh^2 x + Q_{s - \frac{1}{2}}'(\cosh x)\cosh x,
\end{aligned}
\]
which implies that $\widetilde{Q}_s$ satisfies
\begin{equation}\label{eq:vs.Legendre.equation}
\widetilde{Q}_s''(x) + \frac{\cosh x}{\sinh x}\widetilde{Q}_s'(x) - \left(s^2 - \frac{1}{4}\right)\widetilde{Q}_s(x) = 0, \qquad x > 0.
\end{equation}
To see explicitly what the factor $(\sinh x)^{1/2}$ accomplishes in \eqref{eq:us.Q.function}, write $\widetilde{Q}_s(x) = (\sinh x)^{-1/2}u_s(x)$. Direct differentiation gives
\[
\begin{aligned}
\widetilde{Q}_s'(x) &= (\sinh x)^{-1/2}\left\{u_s'(x) - \frac{1}{2}\coth(x)u_s(x)\right\}, \\
\widetilde{Q}_s''(x) &= (\sinh x)^{-1/2}\left\{u_s''(x) - \coth(x)u_s'(x) + \left(\frac{1}{2}\operatorname{csch}^2(x) + \frac{1}{4}\coth^2(x)\right)u_s(x)\right\}.
\end{aligned}
\]
Substitution in \eqref{eq:vs.Legendre.equation}, followed by the identity $\coth^2(x) = 1 + \operatorname{csch}^2(x)$, gives
\[
u_s''(x) + \frac{1}{4\sinh^2x}u_s(x) = s^2u_s(x), \qquad x > 0,
\]
which proves \eqref{eq:Schrodinger.equation}.

The integral formula \eqref{eq:Legendre.integral} also gives
\begin{equation}\label{eq:us.function.integral}
u_s(x) = (\sinh x)^{1/2}2^{-s - \frac{1}{2}}\int_{-1}^1 (1 - u^2)^{s - \frac{1}{2}}(\cosh x - u)^{-s - \frac{1}{2}} \, \rd u.
\end{equation}
Every factor in the integrand is strictly positive for $-1 < u < 1$, and any endpoint singularities are integrable for $s > -1/2$. Therefore, $u_s(x) > 0$ throughout that range.

For the behavior as $x \to \infty$, factor $(\cosh x)^{-s - \frac{1}{2}}$ from the integrand and write
\[
u_s(x) = \frac{(\sinh x)^{1/2}}{2^{s + \frac{1}{2}}(\cosh x)^{s + \frac{1}{2}}} I_s(x),
\]
where
\[
I_s(x) \leqdef \int_{-1}^1 (1 - u^2)^{s - 1/2}\left(1 - \frac{u}{\cosh x}\right)^{-s - \frac{1}{2}} \, \rd u.
\]
For every fixed $s \geq 0$, the factor $u\mapsto 1 - u/\cosh x$ is uniformly bounded below by $1/2$ for all sufficiently large $x$ and converges uniformly to one for $-1 \leq u \leq 1$, as $x\to \infty$. Consequently, by the dominated convergence theorem,
\[
I_s(x) \to \int_{-1}^1 (1 - u^2)^{s - \frac{1}{2}} \, \rd u, \qquad x\to \infty.
\]
It is elementary that, as $x\to \infty$,
\[
(\sinh x)^{1/2} = 2^{-1/2}e^{x/2}(1 + O(e^{-2x})), \qquad (\cosh x)^{-s - \frac{1}{2}} = 2^{s + \frac{1}{2}} e^{-(s + \frac{1}{2})x}(1 + O(e^{-2x})),
\]
and therefore we obtain, for $s \geq 0$,
\begin{equation}\label{eq:us.asymptotic}
u_s(x) = c_s e^{-sx}(1 + o(1)),
\end{equation}
where $c_s$ is defined in \eqref{eq:c:s:constant}.  This proves the left-hand side of \eqref{eq:asymp.us.us.prime} and also that $u_0(x) \to c_0 > 0$ as $x\to\infty$.

Differentiation of \eqref{eq:us.function.integral} under the integral sign is justified by the same uniform bound, and it yields
\[
\frac{u_s'(x)}{u_s(x)} = \frac{1}{2}\coth x - \left(s + \frac{1}{2}\right)\sinh x\frac{\int_{-1}^1 (1 - v^2)^{s - \frac{1}{2}}(\cosh x - v)^{-s - 3/2} \, \rd v}{\int_{-1}^1 (1 - v^2)^{s - \frac{1}{2}}(\cosh x - v)^{-s - \frac{1}{2}} \, \rd v}.
\]
Factoring the powers $(\cosh x)^{-s - 3/2}$ and $(\cosh x)^{-s - \frac{1}{2}}$ from the numerator and denominator, respectively, shows that the quotient of the two integrals is $(\cosh x)^{-1}(1 + o(1))$. Therefore,
\[
\frac{u_s'(x)}{u_s(x)} = \frac{1}{2} - \left(s + \frac{1}{2}\right) + o(1) = -s + o(1), \qquad x\to \infty.
\]
Together with \eqref{eq:us.asymptotic}, this yields
\begin{equation}\label{eq:us.derivative.asymptotic}
u_s'(x) = -s c_s e^{-sx}(1 + o(1)), \qquad s > 0,
\end{equation}
which proves the right-hand side of \eqref{eq:asymp.us.us.prime}. Also, taking $s = 0$ in the logarithmic-derivative asymptotic above and using $u_0(x) \to c_0 > 0$ gives $u_0'(x) \to 0$. This concludes the proof of Lemma~\ref{lem:us.properties}.
$\qed$

\subsection{Proof of Lemma~\ref{lem:Darboux.Wronskians}}

We proceed by induction on $k$. The assertion for $k=1$ follows immediately from
\[
W_x(u_{s_1}) = u_{s_1}(x) > 0.
\]
Suppose that $k \geq 2$ and that the assertion has been proved for every family of $k - 1$ functions satisfying the hypotheses of the lemma. For $i = 2,\ldots,k$, direct differentiation and the differential equations give
\begin{equation}\label{eq:derivative.Wronskian}
\frac{\rd}{\rd x}W_x(u_{s_1},u_{s_i})
= \frac{\rd}{\rd x}\{u_{s_1}u_{s_i}' - u_{s_1}'u_{s_i}\}
= u_{s_1}u_{s_i}'' - u_{s_1}''u_{s_i}
= (s_i^2 - s_1^2)u_{s_1}(x)u_{s_i}(x) > 0.
\end{equation}
By the assumed behavior of the functions $u_{s_i}$ at infinity, we obtain $W_x(u_{s_1},u_{s_i}) \to 0$ as $x\to \infty$, and this holds also if $s_1 = 0$. Integrating \eqref{eq:derivative.Wronskian} over the interval $(x,\infty)$ therefore yields
\begin{equation}\label{eq:Wronskian.integral}
W_x(u_{s_1},u_{s_i}) = -(s_i^2 - s_1^2)\int_x^{\infty}u_{s_1}(t)u_{s_i}(t) \, \rd t < 0.
\end{equation}
Since $s_1 \geq 0$, then
\[
\int_x^{\infty}u_{s_1}(t)u_{s_i}(t) \, \rd t = \frac{a_1a_i}{s_1 + s_i}e^{-(s_1 + s_i)x}(1 + o(1)),
\]
as $x \to \infty$, and by substitution in \eqref{eq:Wronskian.integral} we obtain
\begin{equation}\label{eq:Wronskian.asymptotic}
W_x(u_{s_1},u_{s_i}) = -(s_i - s_1)u_{s_1}(x)u_{s_i}(x)(1 + o(1)).
\end{equation}

To complete the induction step, the Wronskian of the $k$ original functions $u_{s_1},\ldots,u_{s_k}$ must first be reduced to a Wronskian of $k - 1$ transformed functions. It will then remain to verify that the transformed functions are strictly positive with the prescribed behavior at infinity, and that they satisfy differential equations of the same form with a new common smooth potential.

Consider the columns of the Wronskian matrix. To reduce its determinant by expansion along the first row, the last $k - 1$ entries of that row should be transformed into zeros. This suggests using a monic first-order differential operator that annihilates $u_{s_1}$. To this end, let
\[
\mathcal{A} \leqdef \frac{\rd}{\rd x} - w(x), \qquad w(x) \leqdef \frac{u_{s_1}'(x)}{u_{s_1}(x)}.
\]
Since $u_{s_1}$ is strictly positive and smooth, then the function $w$ also is smooth, and $\mathcal{A} u_{s_1} = 0$.

Now fix $x > 0$, and let $\bb{C}_j$ denote the $j$th column of the Wronskian matrix $[u_{s_i}^{(j - 1)}(x)]_{i,j=1}^k$. For $j = k,\ldots,2$, in decreasing order, replace $\bb{C}_j$ by
\[
\bb{C}_j - \sum_{\ell=0}^{j - 2}\binom{j - 2}{\ell}w^{(j - 2 - \ell)}(x)\bb{C}_{\ell + 1}.
\]
The columns occurring in the sum have not yet been changed, and each operation adds a linear combination of other columns to $\bb{C}_j$, so the determinant is preserved. By the Leibniz rule, the $i$th entry of the new $j$th column is
\[
u_{s_i}^{(j - 1)}(x) - \sum_{\ell=0}^{j - 2}\binom{j - 2}{\ell}w^{(j - 2 - \ell)}(x)u_{s_i}^{(\ell)}(x) = \frac{\rd^{j-2}}{\rd x^{j-2}} [u'_{s_i}(x) - w(x)u_{s_i}(x)] = (\mathcal{A} u_{s_i})^{(j - 2)}(x).
\]
Since $\mathcal{A} u_{s_1} = 0$, the transformed matrix has the form
\[
\begin{bmatrix}
u_{s_1}(x) & 0 & \cdots & 0 \\
u_{s_2}(x) & (\mathcal{A} u_{s_2})(x) & \cdots & (\mathcal{A} u_{s_2})^{(k - 2)}(x) \\
\vdots & \vdots & & \vdots \\
u_{s_k}(x) & (\mathcal{A} u_{s_k})(x) & \cdots & (\mathcal{A} u_{s_k})^{(k - 2)}(x)
\end{bmatrix}.
\]
Expansion along the first row now produces the key identity
\begin{equation}\label{eq:Wronskian.transformed}
W_x(u_{s_1},\ldots,u_{s_k}) = u_{s_1}(x) \, W_x(\mathcal{A} u_{s_2},\ldots,\mathcal{A} u_{s_k}).
\end{equation}

We first verify strict positivity and the required behavior at infinity for the transformed functions, which are the negatives of those appearing in the Wronskian on the right-hand side of \eqref{eq:Wronskian.transformed}. For $i = 2,\ldots,k$, define
\begin{equation}\label{eq:v.si.function}
v_{s_i}(x) \leqdef -\mathcal{A} u_{s_i}(x) = -\frac{W_x(u_{s_1},u_{s_i})}{u_{s_1}(x)} > 0,
\end{equation}
where the strict positivity follows from \eqref{eq:Wronskian.integral}. Dividing \eqref{eq:Wronskian.asymptotic} by $u_{s_1}(x)$ gives
\begin{equation}\label{eq:v.si.asymptotic}
v_{s_i}(x) = (s_i - s_1)a_i e^{-s_ix}(1 + o(1)), \qquad x \to \infty.
\end{equation}
Differentiating \eqref{eq:v.si.function} and using \eqref{eq:derivative.Wronskian} gives
\[
v_{s_i}'(x) = -(s_i^2 - s_1^2)u_{s_i}(x) - \frac{u_{s_1}'(x)}{u_{s_1}(x)}v_{s_i}(x).
\]
The hypotheses imply that $u_{s_1}'(x)/u_{s_1}(x) \to -s_1$ when $s_1 > 0$. The same conclusion holds when $s_1 = 0$, since then $u_{s_1}(x) \to a_1 > 0$ and $u_{s_1}'(x) \to 0$. Hence, \eqref{eq:behavior.infinity} and \eqref{eq:v.si.asymptotic} yield
\begin{equation}\label{eq:v.prime.si.asymptotic}
\begin{aligned}
v_{s_i}'(x)
&= \{-(s_i^2 - s_1^2) + s_1(s_i - s_1)\}a_i e^{-s_ix} + o(e^{-s_ix}) \\
&= -s_i(s_i - s_1)a_i e^{-s_ix}(1 + o(1)).
\end{aligned}
\end{equation}
Thus, the transformed functions $v_{s_2},\ldots,v_{s_k}$ have the required behavior at infinity, with strictly positive constants $(s_i - s_1)a_i$.

It remains to verify that $v_{s_2},\ldots,v_{s_k}$ satisfy differential equations of the same form as those governing the $u_{s_i}$, featuring a new common smooth potential. The algebraic manipulations that follow execute a Darboux transformation \citep{Darboux1882}, a technique famously generalized to Wronskians by \citet{Crum1955}. The strategy is to factor the original second-order differential operator $-\frac{\rd^2}{\rd x^2} + V + s_1^2$ into the product $-\mathcal{B}\mathcal{A}$, where $\mathcal{A}$ is the first-order operator constructed earlier to annihilate $u_{s_1}$, and $\mathcal{B}$ is a complementary first-order operator. Reversing the order of this product naturally generates a new differential operator with a modified potential. Establishing an intertwining relation between the old and new operators will then guarantee that the remaining transformed functions automatically satisfy the new differential equation.

To execute this factorization explicitly, let
\[
\mathcal{L} \leqdef -\frac{\rd^2}{\rd x^2} + V, \qquad \mathcal{B} \leqdef \frac{\rd}{\rd x} + w.
\]
Since
\[
w' + w^2 = \frac{u_{s_1}''}{u_{s_1}} = V + s_1^2,
\]
then, by direct multiplication, we obtain
\[
\begin{aligned}
-\mathcal{B}\mathcal{A}
&= -\left(\frac{\rd}{\rd x} + w\right)\left(\frac{\rd}{\rd x} - w\right)
= -\frac{\rd^2}{\rd x^2} + w' + w^2 = \mathcal{L} + s_1^2,
\end{aligned}
\]
and
\[
\begin{aligned}
-\mathcal{A}\mathcal{B}
&= -\left(\frac{\rd}{\rd x} - w\right)\left(\frac{\rd}{\rd x} + w\right)
= -\frac{\rd^2}{\rd x^2} - w' + w^2 = -\frac{\rd^2}{\rd x^2} + (V - 2w') + s_1^2.
\end{aligned}
\]
The reversed product therefore suggests defining a new potential $\widetilde{V}$ and a new differential operator $\widetilde{\mathcal{L}}$ as follows:
\[
\widetilde{V}(x) \leqdef V(x) - 2w'(x) = V(x) - 2\frac{\rd^2}{\rd x^2}\log u_{s_1}(x), \qquad \widetilde{\mathcal{L}} \leqdef -\frac{\rd^2}{\rd x^2} + \widetilde{V}.
\]
Then
\[
-\mathcal{B}\mathcal{A} = \mathcal{L} + s_1^2, \qquad -\mathcal{A}\mathcal{B} = \widetilde{\mathcal{L}} + s_1^2.
\]
Consequently, $\mathcal{A}(\mathcal{L} + s_1^2) = -\mathcal{A}\mathcal{B}\mathcal{A} = (\widetilde{\mathcal{L}} + s_1^2)\mathcal{A}$, and we deduce the intertwining relation
\[
\mathcal{A}\mathcal{L} = \widetilde{\mathcal{L}}\mathcal{A}.
\]
Since $\mathcal{L} u_{s_i} = -s_i^2u_{s_i}$ and $v_{s_i} = -\mathcal{A} u_{s_i}$, it follows that $\widetilde{\mathcal{L}} v_{s_i} = -s_i^2v_{s_i}$, or, equivalently,
\begin{equation}\label{eq:v.si.ode}
v_{s_i}'' - \widetilde{V}v_{s_i} = s_i^2v_{s_i}.
\end{equation}
Since $u_{s_1}$ is strictly positive and smooth, the functions $\smash{\widetilde{V}}$ and $v_{s_2},\ldots,v_{s_k}$ are smooth. Therefore, \eqref{eq:v.si.asymptotic}, \eqref{eq:v.prime.si.asymptotic}, and \eqref{eq:v.si.ode} together show that the transformed functions $v_{s_2},\ldots,v_{s_k}$ satisfy all the hypotheses of the lemma with the common potential $\smash{\widetilde{V}}$, and the induction hypothesis can now be applied.

Substituting $\mathcal{A} u_{s_i} = -v_{s_i}$ in \eqref{eq:Wronskian.transformed}, we obtain
\[
W_x(u_{s_1},\ldots,u_{s_k})
= u_{s_1}(x)W_x(-v_{s_2},\ldots,-v_{s_k})
= (-1)^{k - 1}u_{s_1}(x)W_x(v_{s_2},\ldots,v_{s_k}).
\]
By the induction hypothesis, $\operatorname{sgn}W_x(v_{s_2},\ldots,v_{s_k}) = (-1)^{(k - 1)(k - 2)/2}$. Since $u_{s_1}(x) > 0$, it follows that
\[
\operatorname{sgn}W_x(u_{s_1},\ldots,u_{s_k}) = (-1)^{k - 1}(-1)^{(k - 1)(k - 2)/2} = (-1)^{k(k - 1)/2},
\]
which completes the induction. This concludes the proof of Lemma~\ref{lem:Darboux.Wronskians}.
$\qed$

\subsection{Proof of Lemma~\ref{lem:Wronskian.to.collocation}}

For each $k = 1,\ldots,n$, the function $x \mapsto W_x(f_1,\ldots,f_k)$ is continuous and nonzero on the interval $I$, and hence has constant sign. Define $\sigma_0 \leqdef 1$ and, for $k \geq 1$,
\[
\sigma_k \leqdef \operatorname{sgn}W_x(f_1,\ldots,f_k).
\]
For $j = 1,\ldots,n$, define $g_j \leqdef \sigma_{j - 1}\sigma_j f_j$. Then, for any $k = 1,\ldots,n$,
\[
W_x(g_1,\ldots,g_k)
= \left(\prod_{j=1}^k\sigma_{j - 1}\sigma_j\right)W_x(f_1,\ldots,f_k)
= \sigma_k W_x(f_1,\ldots,f_k) > 0.
\]
The second equality holds because $\prod_{j=1}^k\sigma_{j - 1}\sigma_j = \sigma_0\sigma_k\prod_{j=1}^{k - 1}\sigma_j^2 = \sigma_k$.

Let $[a,b] \subseteq I$ be a compact interval such that $a < x_1 < \cdots < x_n < b$. Since these Wronskians are strictly positive, then $(g_1,\ldots,g_n)$ is a positive extended complete Tchebycheff system on $[a,b]$; see \citet[Theorem~1.1, p.~376]{KarlinStudden1966}. Therefore, by the defining determinant property of such a system,
\[
\det\!\big[g_i(x_j)\big]_{i,j=1}^n > 0.
\]
Finally,
\[
\det\!\big[g_i(x_j)\big]_{i,j=1}^n
= \left(\prod_{i=1}^n\sigma_{i - 1}\sigma_i\right)\det\!\big[f_i(x_j)\big]_{i,j=1}^n
= \sigma_n\det\!\big[f_i(x_j)\big]_{i,j=1}^n.
\]
Since the determinant on the left is strictly positive, then $\det[f_i(x_j)]_{i,j=1}^n$ is nonzero and has sign $\sigma_n$, which is the sign of $W_x(f_1,\ldots,f_n)$. This concludes the proof of Lemma~\ref{lem:Wronskian.to.collocation}.
$\qed$

\section{Auxiliary arguments for the proof of the sharpness assertion in Theorem~\ref{thm:conformal.block.threshold}}\label{app:boundary.calculations}

This appendix contains the precise auxiliary results used in the four steps of Subsection~\ref{subsec:threshold.sharpness}. Lemma~\ref{lem:confluent.Taylor} justifies the passage between an ordinary minor and a determinant of mixed derivatives in \ref{Step:1} and \ref{Step:4}. Lemma~\ref{lem:q.boundary.expansion} gives the series used in \ref{Step:2}. Its large-$y$ consequence is Lemma~\ref{lem:confluent.boundary.asymptotic}, whose proof uses Lemmas~\ref{lem:mode.Wronskian} and~\ref{lem:interlaced.Wronskian}. The sign-selection result used in \ref{Step:3} is Lemma~\ref{lem:H.sign.obstruction}, which is proved from Lemmas~\ref{lem:H.nonzero} and~\ref{lem:c.derivative.measure}.

\subsection{From a determinant of mixed \texorpdfstring{$\Delta$- and $y$-derivatives}{Delta- and y-derivatives} to an ordinary minor}

For $N \geq 2$, when the $\Delta$-values in an ordinary kernel determinant approach one another, its rows approach one another and the determinant tends to zero. The same happens to its columns when the $y$-values approach one another. The two Vandermonde factors in the next lemma exactly capture the polynomial rates at which these determinants approach zero. After division by these factors, the limit is the determinant of mixed derivatives divided by the two products of factorials. For any fixed ordered offsets in the lemma, all the normalizing factors are strictly positive, so a nonzero mixed-derivative determinant has the same sign as the displayed ordinary minor for all sufficiently small strictly positive $\varepsilon$ and $\eta$.

\begin{lemma}[Confluent limit of a kernel minor]\label{lem:confluent.Taylor}
Let $N \in \N$, let $\mathcal{K}$ be of class $C^{2N - 2}$ near $(\Delta_0,y_0)$, and let $x_1 < \cdots < x_N$ and $w_1 < \cdots < w_N$. Recall the definition of $M$ and $V$ from \eqref{eq:M.V}:
\[
M = N(N - 1)/2, \qquad V(t_1,\ldots,t_N) = \prod_{1\leq i<j\leq N}(t_j - t_i).
\]
Then, as $\varepsilon,\eta \to 0^{+}$,
\begin{multline}
\det\!\big[\mathcal{K}(\Delta_0 + \varepsilon x_r,y_0 + \eta w_s)\big]_{r,s=1}^N \\
= \frac{\varepsilon^M\eta^M V(x_1,\ldots,x_N)V(w_1,\ldots,w_N)}{\big(\prod_{j=0}^{N - 1}j!\big)^2}\det\!\big[\partial_{\Delta}^i\partial_y^j\mathcal{K}(\Delta_0,y_0)\big]_{i,j=0}^{N - 1} + o(\varepsilon^M\eta^M). \label{eq:confluent.Taylor}
\end{multline}
\end{lemma}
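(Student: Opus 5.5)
Since $\mathcal{K}$ is only $C^{2N-2}$ near $(\Delta_0,y_0)$, I will not expand the whole determinant at once. The plan is to pass to divided differences in each variable separately, using the classical Hermite–Genocchi (B-spline) representation. Fix $w_1<\cdots<w_N$ and $\eta>0$, and set $y_s=y_0+\eta w_s$ and $\Delta_r=\Delta_0+\varepsilon x_r$.

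\emph{Row reduction.} Subtracting suitable linear combinations of earlier rows replaces row $i+1$ (for $i=0,\ldots,N-1$) by the $i$th divided difference in the first variable,
\[
\mathcal{K}[\Delta_1,\ldots,\Delta_{i+1};\,\cdot\,].
\]
This is an upper unitriangular change of basis with a Vandermonde-type scaling, so the determinant is multiplied by
\[
\prod_{r<r'}(\Delta_{r'}-\Delta_r)=\varepsilon^M V(x_1,\ldots,x_N).
\]

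\emph{Column reduction.} The same operation on the columns turns column $j+1$ into the $j$th divided difference in $y$ at $y_1,\ldots,y_{j+1}$, with a factor $\eta^M V(w_1,\ldots,w_N)$.

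After both reductions,
\[
\det\!\big[\mathcal{K}(\Delta_r,y_s)\big]_{r,s=1}^N=\varepsilon^M\eta^M V(x)V(w)\det\!\big[\mathcal{K}[\Delta_1,\ldots,\Delta_{i+1};\,y_1,\ldots,y_{j+1}]\big]_{i,j=0}^{N-1},
\]
where the entries are the bivariate (tensor-product) divided differences. This is an exact identity, and it needs no smoothness.

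\emph{Entries as averaged derivatives.} Apply the Hermite–Genocchi formula in each variable, one after the other:
\[
\mathcal{K}[\Delta_1,\ldots,\Delta_{i+1};y_1,\ldots,y_{j+1}]=\int\!\!\int \partial_\Delta^i\partial_y^j\mathcal{K}\big(\textstyle\sum_r t_r\Delta_r,\ \sum_s u_s y_s\big)\,\rd\mu_i(t)\,\rd\nu_j(u),
\]
where $\mu_i$ and $\nu_j$ are the uniform measures on the standard simplices, of total masses $1/i!$ and $1/j!$. Justifying this requires $\partial_\Delta^i\partial_y^j\mathcal{K}$ to exist and be continuous for $i,j\le N-1$. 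That is guaranteed because these are mixed derivatives of total order at most $2N-2$ and $\mathcal{K}\in C^{2N-2}$. One also checks that the iterated one-variable formula legitimately passes derivatives through the integral; this follows from the continuity of the mixed partials on a small rectangle.

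\emph{Passing to the limit.} As $\varepsilon,\eta\to0^+$, all evaluation points lie in a shrinking rectangle around $(\Delta_0,y_0)$. By uniform continuity each entry converges to
\[
\frac{\partial_\Delta^i\partial_y^j\mathcal{K}(\Delta_0,y_0)}{i!\,j!}.
\]
The determinant is a continuous function of its entries, so the reduced determinant tends to
\[
\frac{\det\!\big[\partial_\Delta^i\partial_y^j\mathcal{K}(\Delta_0,y_0)\big]}{\big(\prod_{j} j!\big)^2}.
\]
Here the factors $1/i!$ and $1/j!$ are pulled out of the rows and columns. Writing the convergence as "limit plus $o(1)$" and multiplying by $\varepsilon^M\eta^M V(x)V(w)$ yields \eqref{eq:confluent.Taylor}, with the remainder $o(\varepsilon^M\eta^M)$ holding jointly as $\varepsilon,\eta\to0^+$.

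The main obstacle is the regularity bookkeeping. A naive joint Taylor expansion of every entry to order $2N-2$ would leave cross remainders whose cancellation is awkward to track. The divided-difference route avoids this, because it needs only continuity of the mixed partials with $i,j\le N-1$, which the $C^{2N-2}$ hypothesis supplies. The only other point needing care is the exact row-reduction identity, i.e.\ the Newton-form change of basis. It follows from the recursive definition of divided differences by induction on the row index.
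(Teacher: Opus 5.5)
Your proposal is correct and follows essentially the same route as the paper. Both use exact divided-difference elimination on the rows and columns to extract the factors $\varepsilon^M V(x_1,\ldots,x_N)$ and $\eta^M V(w_1,\ldots,w_N)$, and then show that the bivariate divided differences converge to $\partial_{\Delta}^i\partial_y^j\mathcal{K}(\Delta_0,y_0)/(i!\,j!)$. Your Hermite--Genocchi representation only makes explicit the joint-convergence step, which the paper asserts directly from the $C^{2N-2}$ hypothesis.
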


\smallskip

This lemma is the two-variable confluent form of the standard divided-difference determinant identity; see \citet[Example~4, Eq.~(14), p.~49, Definition~37, p.~59, and Eqs.~(44)--(45), p.~61]{deBoor2005}. The proof is included for completeness and to record the precise normalization needed below.

\begin{proof}[Proof of Lemma~\ref{lem:confluent.Taylor}]
The assertion is immediate for $N = 1$, so suppose that $N \geq 2$. Let $\Delta_1 < \cdots < \Delta_N$ and $y_1 < \cdots < y_N$ tend to $\Delta_0$ and $y_0$, respectively. For a sequence of points $a_1,\ldots,a_{k + 1}$, the divided difference of order $k$ of a function $f$ with respect to the $\Delta$-variable is defined recursively by
\begin{align*}
[a_1]_{\Delta}f(\cdot,y) &\leqdef f(a_1,y), \\
[a_1,\ldots,a_{k + 1}]_{\Delta}f(\cdot,y) &\leqdef \frac{[a_2,\ldots,a_{k + 1}]_{\Delta}f(\cdot,y) - [a_1,\ldots,a_k]_{\Delta}f(\cdot,y)}{a_{k + 1} - a_1}.
\end{align*}
The analogous recursive definition applies to the divided difference $[a_1,\ldots,a_{k + 1}]_y f(\Delta,\cdot)$ taken with respect to the $y$-variable. Starting from the rows of $[\mathcal{K}(\Delta_r,y_s)]_{r,s=1}^N$, perform divided-difference elimination in the $\Delta$-variable. At stage $k = 1,\ldots,N - 1$, process $r = N,\ldots,k + 1$ in descending order and replace row $r$ by the difference between rows $r$ and $r - 1$, divided by $\Delta_r - \Delta_{r - k}$. The descending order ensures that both rows used at stage $k$ still come from stage $k - 1$. After stage $k$, row $r$, for $r \geq k + 1$, consists of the divided differences $[\Delta_{r-k},\ldots,\Delta_r]_{\Delta}\mathcal{K}(\mathord{\cdot},y_s)$, $1 \leq s \leq N$. Every difference $\Delta_j - \Delta_i$, $i < j$, occurs exactly once as a divisor, so their product is $V(\Delta_1,\ldots,\Delta_N)$.

To illustrate this elimination process, consider the case $N = 3$. We start with the initial matrix of evaluations. For brevity, we suppress the column index $y_s$ and denote the $r$-th row by its evaluation point $[\Delta_r]_{\Delta}$:
\[
\begin{bmatrix}
[\Delta_1]_{\Delta} \\
[\Delta_2]_{\Delta} \\
[\Delta_3]_{\Delta}
\end{bmatrix}.
\]
At stage $k = 1$, we process the rows from bottom to top ($r = 3$, then $r = 2$), replacing each with the divided difference from the row immediately above it. The descending order ensures that we do not overwrite row $2$ before row $3$ needs it:
\[
\xrightarrow{k=1} \quad
\begin{bmatrix}
[\Delta_1]_{\Delta} \\[1ex]
\dfrac{[\Delta_2]_{\Delta} - [\Delta_1]_{\Delta}}{\Delta_2 - \Delta_1} \\[2ex]
\dfrac{[\Delta_3]_{\Delta} - [\Delta_2]_{\Delta}}{\Delta_3 - \Delta_2}
\end{bmatrix}
=
\begin{bmatrix}
[\Delta_1]_{\Delta} \\
[\Delta_1, \Delta_2]_{\Delta} \\
[\Delta_2, \Delta_3]_{\Delta}
\end{bmatrix}.
\]
At stage $k = 2$, we process the remaining row ($r = 3$), dividing by the offset $\Delta_3 - \Delta_1$:
\[
\xrightarrow{k=2} \quad
\begin{bmatrix}
[\Delta_1]_{\Delta} \\[1ex]
[\Delta_1, \Delta_2]_{\Delta} \\[1ex]
\dfrac{[\Delta_2, \Delta_3]_{\Delta} - [\Delta_1, \Delta_2]_{\Delta}}{\Delta_3 - \Delta_1}
\end{bmatrix}
=
\begin{bmatrix}
[\Delta_1]_{\Delta} \\
[\Delta_1, \Delta_2]_{\Delta} \\
[\Delta_1, \Delta_2, \Delta_3]_{\Delta}
\end{bmatrix}.
\]
Ultimately, we have divided by each of $\Delta_2 - \Delta_1$, $\Delta_3 - \Delta_2$, and $\Delta_3 - \Delta_1$ exactly once, which produces the Vandermonde determinant $V(\Delta_1, \Delta_2, \Delta_3)$ in the denominator.

Applying the same elimination to the columns in the $y$-variable divides by $V(y_1,\ldots,y_N)$. The entry in position $(i + 1,j + 1)$ of the resulting matrix is exactly
\[
[\Delta_1,\ldots,\Delta_{i + 1}]_{\Delta} \, [y_1,\ldots,y_{j + 1}]_y \, \mathcal{K}.
\]
As the distinct evaluation points $\Delta_r$ and $y_s$ converge to $\Delta_0$ and $y_0$ respectively, this entry converges to $\partial_{\Delta}^i\partial_y^j\mathcal{K}(\Delta_0,y_0)/(i!j!)$. The largest mixed derivative needed has a total order of $i + j \leq 2N - 2$. Since we assumed that the function $\mathcal{K}$ is of class $C^{2N - 2}$, these continuous derivatives are guaranteed to exist, ensuring that the limit converges jointly as both sets of evaluation points merge to their targets. Consequently,
\begin{equation}\label{eq:divided.difference.limit}
\frac{\det[\mathcal{K}(\Delta_r,y_s)]_{r,s=1}^N}{V(\Delta_1,\ldots,\Delta_N)V(y_1,\ldots,y_N)} \longrightarrow \frac{\det[\partial_{\Delta}^i\partial_y^j\mathcal{K}(\Delta_0,y_0)]_{i,j=0}^{N - 1}}{\big(\prod_{j=0}^{N - 1}j!\big)^2}.
\end{equation}
Now let $\Delta_r = \Delta_0 + \varepsilon x_r$ and $y_s = y_0 + \eta w_s$. The Vandermonde determinants satisfy
\[
V(\Delta_1,\ldots,\Delta_N) = \varepsilon^M V(x_1,\ldots,x_N), \qquad V(y_1,\ldots,y_N) = \eta^M V(w_1,\ldots,w_N).
\]
Multiplying \eqref{eq:divided.difference.limit} by these two factors gives \eqref{eq:confluent.Taylor}.
\end{proof}

\subsection{Proof of Lemma~\ref{lem:q.boundary.expansion}}

The starting identity is the following convergent zero-balanced hypergeometric series, given in \cite[Eqs.~15.1.2 and~15.8.10]{Daalhuis2010}:
\begin{equation}\label{eq:zero.balanced.series}
B(\Delta,\Delta) \, {}_2F_1(\Delta,\Delta;2\Delta;1 - t) = \sum_{k=0}^{\infty} \frac{(\Delta)_k^2}{(k!)^2} \, \big[2\psi(k + 1) - 2\psi(\Delta + k) - \log t\big] \, t^k,
\end{equation}
where $\Delta > 0$, $0 < t < 1$. Define
\[
\begin{aligned}
\mathcal{P}_{\Delta}(t) &\leqdef (1 - t)^{\Delta} \sum_{k=0}^{\infty} \frac{(\Delta)_k^2}{(k!)^2}t^k, \\
\mathcal{Q}_{\Delta}(t) &\leqdef (1 - t)^{\Delta} \sum_{k=0}^{\infty} \frac{(\Delta)_k^2}{(k!)^2} \, \big[2\psi(k + 1) - 2\psi(\Delta + k)\big] \, t^k.
\end{aligned}
\]
Since $q(\Delta,y) = B(\Delta,\Delta)(1 - t)^{\Delta}{}_2F_1(\Delta,\Delta;2\Delta;1 - t)$ with $t = e^{-y}$, then by \eqref{eq:zero.balanced.series},
\begin{equation}\label{eq:q.expansion.1}
q(\Delta,y) = y\mathcal{P}_{\Delta}(e^{-y}) + \mathcal{Q}_{\Delta}(e^{-y}).
\end{equation}
Let $\widetilde{P}_k(\Delta)$ and $\widetilde{Q}_k(\Delta)$ be the Taylor coefficients at zero of the analytic functions $\mathcal{P}_{\Delta}(t)$ and $\mathcal{Q}_{\Delta}(t)$, respectively:
\begin{equation}\label{eq:q.expansion.2}
\widetilde{P}_k(\Delta) \leqdef \frac{\partial_t^k\mathcal{P}_{\Delta}(0)}{k!}, \qquad \widetilde{Q}_k(\Delta) \leqdef \frac{\partial_t^k\mathcal{Q}_{\Delta}(0)}{k!}.
\end{equation}
In particular, $\widetilde{P}_0(\Delta) = 1$ and, by \eqref{eq:h.definition}, $\widetilde{Q}_0(\Delta) = -2\gamma - 2\psi(\Delta) = h(\Delta)$. It also follows from \eqref{eq:q.expansion.1} and \eqref{eq:q.expansion.2} that
\begin{equation}\label{eq:q.preliminary.mode.expansion}
q(\Delta,y) = \sum_{k=0}^{\infty} e^{-ky} \big[\widetilde{P}_k(\Delta)y + \widetilde{Q}_k(\Delta)\big].
\end{equation}

For every compact set $\mathcal{C} \subseteq (0,\infty)$ and every fixed $a \in \N_0$, the gamma-ratio and psi-function asymptotics \cite[Eqs.~5.11.2 and~5.11.12]{AskeyRoy2010}, together with the polygamma expansion in \cite[Eq.~5.15.9]{AskeyRoy2010}, show that there exist constants $C_1,C_2,C_3 > 0$ such that, uniformly for $\Delta \in \mathcal{C}$,
\[
\left|\partial_{\Delta}^a\frac{(\Delta)_k^2}{(k!)^2}\right| + \left|\partial_{\Delta}^a\left[\frac{(\Delta)_k^2}{(k!)^2} \big[2\psi(k + 1) - 2\psi(\Delta + k)\big]\right]\right| \leq C_1(k + 1)^{C_2}\{\log(k + 2)\}^{C_3}.
\]
Therefore, the series defining $\mathcal{P}_{\Delta}(t)$ and $\mathcal{Q}_{\Delta}(t)$, together with every fixed number of their $\Delta$-derivatives, converge absolutely and uniformly for $\Delta \in \mathcal{C}$ and $t \in \mathbb{C}$ with $|t| \leq R$ whenever $R < 1$. Here, $(1 - t)^{\Delta}$ is defined using the analytic branch of $\log(1 - t)$ on $|t| < 1$.

Fix $0 < R < \rho < 1$. By the preceding uniform convergence, the functions $\partial_{\Delta}^a\mathcal{P}_{\Delta}(t)$ and $\partial_{\Delta}^a\mathcal{Q}_{\Delta}(t)$ are bounded uniformly for $\Delta \in \mathcal{C}$ and $|t| = \rho$. Cauchy's coefficient estimate, as stated in \citet[Proposition~IV.1, p.~246]{FlajoletSedgewick2009}, therefore shows that there exists a constant $C_{a,\rho} > 0$ such that
\[
\sup_{\Delta \in \mathcal{C}}\left\{|\partial_{\Delta}^a\widetilde{P}_k(\Delta)| + |\partial_{\Delta}^a\widetilde{Q}_k(\Delta)|\right\} \leq C_{a,\rho}\rho^{-k}, \qquad k \in \N_0.
\]
It follows that the generating series formed from these two sets of derivatives converge absolutely and uniformly for $\Delta \in \mathcal{C}$ and $t \in \mathbb{C}$ with $|t| \leq R$. Given $y_{\star} > 0$, choose $\rho \in (e^{-y_{\star}/2},1)$ and apply the preceding Cauchy estimate with this value of $\rho$. Applying $\partial_{\Delta}^a\partial_y^b$ to a term in \eqref{eq:q.preliminary.mode.expansion} and using the preceding coefficient estimate gives
\[
\left|\partial_{\Delta}^a \partial_y^b \left[e^{-ky} \big[\widetilde{P}_k(\Delta)y + \widetilde{Q}_k(\Delta)\big]\right]\right| \leq C(k + 1)^b\rho^{-k}(y + 1)e^{-ky},
\]
for $\Delta \in \mathcal{C}$, $y \geq y_{\star}$, and $k \geq 1$. Since $(y + 1)e^{-ky} \leq C_{y_{\star}} e^{-ky_{\star}/2}$ for $y \geq y_{\star}$ and $k \geq 1$, then the right-hand side is bounded by
\[
C_{y_{\star}}'(k + 1)^b\left(\rho^{-1} e^{-y_{\star}/2}\right)^k,
\]
which is summable in $k$. The differentiated series therefore converges uniformly for $\Delta \in \mathcal{C}$ and $y \geq y_{\star}$. Consequently, every fixed mixed derivative of $q$ is obtained by differentiating \eqref{eq:q.preliminary.mode.expansion} term by term.

It remains to prove the asserted polynomial structure of the coefficients. The hypergeometric differential equation \cite[Eq.~15.10.1]{Daalhuis2010} states that $\mathcal{F}(\Delta,z) = {}_2F_1(\Delta,\Delta;2\Delta;z)$ satisfies
\begin{equation}\label{eq:diff.eq.mathcal.F}
z(1 - z)\partial_z^2\mathcal{F}(\Delta,z) + [2\Delta - (2\Delta + 1)z]\partial_z\mathcal{F}(\Delta,z) - \Delta^2\mathcal{F}(\Delta,z) = 0.
\end{equation}
Since $G_{\Delta}(z) = z^{\Delta}\mathcal{F}(\Delta,z)$, then by substituting $\mathcal{F}(\Delta,z) = z^{-\Delta} G_{\Delta}(z)$ in \eqref{eq:diff.eq.mathcal.F}, we obtain
\begin{equation}\label{eq:G.differential.equation}
z^2\{(1 - z)\partial_z^2G_{\Delta}(z) - \partial_zG_{\Delta}(z)\} = \Delta(\Delta - 1)G_{\Delta}(z).
\end{equation}
For $0 < t < 1$, define
\[
\widehat{q}(\Delta,t) \leqdef B(\Delta,\Delta)G_{\Delta}(1 - t).
\]
Then $q(\Delta,y) = \widehat{q}(\Delta,e^{-y})$. With $t = e^{-y}$, we apply the chain rule to obtain
\begin{equation}\label{eq:chain.rule.q}
\begin{aligned}
\partial_y q(\Delta,y)
&= -t\partial_t\widehat{q}(\Delta,t), \\
\partial_y^2 q(\Delta,y)
&= t\partial_t\widehat{q}(\Delta,t) + t^2\partial_t^2\widehat{q}(\Delta,t) = t\{t\partial_t^2\widehat{q}(\Delta,t) + \partial_t\widehat{q}(\Delta,t)\}.
\end{aligned}
\end{equation}
Since $z = 1 - t$, then
\begin{align*}
\partial_t\widehat{q}(\Delta,t) = -B(\Delta,\Delta)\partial_zG_{\Delta}(z), \quad
\partial_t^2\widehat{q}(\Delta,t) = B(\Delta,\Delta)\partial_z^2G_{\Delta}(z),
\end{align*}
and now multiplying \eqref{eq:G.differential.equation} by $B(\Delta,\Delta)$ leads to the identity
\begin{equation}\label{eq:diff.eq.hat.q}
(1 - t)^2\{t\partial_t^2\widehat{q}(\Delta,t) + \partial_t\widehat{q}(\Delta,t)\} = \lambda\widehat{q}(\Delta,t),
\end{equation}
where we recall that $\lambda = \Delta (\Delta-1)$. Combining \eqref{eq:chain.rule.q} and \eqref{eq:diff.eq.hat.q}, and using $t = e^{-y}$, gives
\begin{equation}\label{eq:q.y.diff.eq}
\partial_y^2 q(\Delta,y)
= \lambda\frac{e^{-y}}{(1 - e^{-y})^2}q(\Delta,y)
= \lambda\sum_{r=1}^{\infty}r e^{-ry}q(\Delta,y).
\end{equation}
For each $k \in \N_0$, the second derivative of the $k$th term in the series in \eqref{eq:q.preliminary.mode.expansion} is
\[
\begin{aligned}
\partial_y^2\big[e^{-ky}\{\widetilde{P}_k(\Delta)y + \widetilde{Q}_k(\Delta)\}\big]
&= k^2 e^{-ky}\{\widetilde{P}_k(\Delta)y + \widetilde{Q}_k(\Delta)\} - k e^{-ky}\widetilde{P}_k(\Delta) - k e^{-ky}\widetilde{P}_k(\Delta) \\
&= e^{-ky}\big[k^2\widetilde{P}_k(\Delta)y + k^2\widetilde{Q}_k(\Delta) - 2k\widetilde{P}_k(\Delta)\big].
\end{aligned}
\]
Substituting the series expansion from \eqref{eq:q.preliminary.mode.expansion} into the right-hand side of \eqref{eq:q.y.diff.eq} yields a product of two sums. When the factor $r e^{-ry}$ is multiplied by a term $e^{-jy}\{\widetilde{P}_j(\Delta)y + \widetilde{Q}_j(\Delta)\}$ from the expansion, the resulting exponent is $e^{-(j+r)y}$. Therefore, this product contributes to the overall coefficient of $e^{-ky}$ precisely when $j + r = k$ (or equivalently, $r = k - j$). Since the sum over $r$ starts at $r = 1$, then the index $j$ ranges from $0$ to $k - 1$. Equating the coefficients of $y e^{-ky}$ and $e^{-ky}$ on both sides of \eqref{eq:q.y.diff.eq} for $k \geq 1$, we obtain the recurrence relations
\begin{equation}\label{eq:P.Q.recurrences}
k^2\widetilde{P}_k = \lambda\sum_{j=0}^{k - 1}(k - j)\widetilde{P}_j, \qquad
k^2\widetilde{Q}_k - 2k\widetilde{P}_k = \lambda\sum_{j=0}^{k - 1}(k - j)\widetilde{Q}_j.
\end{equation}
Since $\widetilde{P}_0 = 1$, the first recurrence in \eqref{eq:P.Q.recurrences} shows by induction that there is a polynomial $P_k$ in one variable such that $\widetilde{P}_k(\Delta) = P_k(\lambda)$ and $\deg P_k = k$. If $P_{k - 1}$ has leading coefficient $((k - 1)!)^{-2}$, then the term with $j = k - 1$ is the only term on the right-hand side that contributes to degree $k$. The leading coefficient of $P_k$ is therefore
\[
\frac{1}{k^2((k - 1)!)^2} = \frac{1}{(k!)^2}.
\]
Thus, $P_k$ has the asserted leading coefficient for every $k \in \N_0$.

It remains to treat $\widetilde{Q}_k$. Let $S_0 = 0$, so that $\widetilde{Q}_0(\Delta) = P_0(\lambda)h(\Delta) + S_0(\lambda)$. Suppose by induction that
\[
\widetilde{Q}_j(\Delta) = P_j(\lambda)h(\Delta) + S_j(\lambda), \qquad 0 \leq j \leq k - 1,
\]
where each $S_j$ is a polynomial in $\lambda$ of degree at most $j$. Substitution in the second recurrence of \eqref{eq:P.Q.recurrences}, followed by the first recurrence, gives
\[
\widetilde{Q}_k(\Delta)
= \frac{2}{k}P_k(\lambda) + \frac{\lambda}{k^2}\sum_{j=0}^{k - 1}(k - j)\big[P_j(\lambda)h(\Delta) + S_j(\lambda)\big]
= P_k(\lambda)h(\Delta) + S_k(\lambda),
\]
where
\[
S_k(\lambda) \leqdef \frac{2}{k}P_k(\lambda) + \frac{\lambda}{k^2}\sum_{j=0}^{k - 1}(k - j)S_j(\lambda).
\]
This formula shows that $S_k$ is a polynomial in $\lambda$ of degree at most $k$. The assertion therefore follows by induction. Defining $Q_k(\Delta) \leqdef \widetilde{Q}_k(\Delta)$ in \eqref{eq:q.preliminary.mode.expansion}, we obtain \eqref{eq:q.mode.expansion} and all the assertions of the lemma. This concludes the proof of Lemma~\ref{lem:q.boundary.expansion}.
$\qed$

\subsection{The two Wronskians in the large-\texorpdfstring{$y$}{y} asymptotic formula for \texorpdfstring{$D_{2m+1}$}{D\_\{2m+1\}} in Lemma~\ref{lem:confluent.boundary.asymptotic}}

To evaluate the determinant of mixed derivatives $D_{2m+1}(\Delta, y) = \det[\partial_{\Delta}^i\partial_y^j q(\Delta,y)]_{i,j=0}^{2m}$, the Cauchy--Binet formula requires summing over all possible selections of $2m+1$ terms from the infinite series in \eqref{eq:q.mode.expansion}. As $y \to \infty$, the dominant contribution comes from the selection with the slowest exponential decay. The specific selection of $2m+1$ terms that can produce the leading asymptotic term $y e^{-m^2y}$ chooses the two products $P_k(\lambda)ye^{-ky}$ and $Q_k(\Delta)e^{-ky}$ for every $k = 0,\ldots,m - 1$, along with the single additional product $P_m(\lambda)ye^{-my}$. The $y$-dependent factors of this selection, in their Cauchy--Binet order, are
\begin{equation}\label{eq:y.dependent.factors}
y,1,ye^{-y},e^{-y},\ldots,ye^{-(m - 1)y},e^{-(m - 1)y},ye^{-my}.
\end{equation}
The next lemma evaluates the Wronskian of exactly this list and determines its exponential factor, its sign, and its coefficient of $y$.

\begin{lemma}[Wronskian of the selected $y$-dependent factors in \eqref{eq:y.dependent.factors}]\label{lem:mode.Wronskian}
Let $m \in \N$ and define
\begin{equation}\label{eq:b.m}
b_m \leqdef \prod_{0\leq r<s\leq m - 1}(s - r)^4\prod_{r=0}^{m - 1}(m - r)^2 > 0.
\end{equation}
Then, as $y \to \infty$,
\begin{equation}\label{eq:mode.Wronskian}
W_y(y,1,ye^{-y},e^{-y},\ldots,ye^{-(m - 1)y},e^{-(m - 1)y},ye^{-my}) = (-1)^m b_m y e^{-m^2y} + O(e^{-m^2y}).
\end{equation}
\end{lemma}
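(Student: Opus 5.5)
The plan is to read the $2m+1$ listed functions as a confluent limit of pure exponentials. Then the Wronskian comes from the classical identity
\[
W_y\big(e^{\mu_1 y},\ldots,e^{\mu_n y}\big)=V(\mu_1,\ldots,\mu_n)\,e^{(\mu_1+\cdots+\mu_n)y},
\]
with $V$ as in \eqref{eq:M.V}. This identity holds because the Wronskian matrix is $[\mu_i^{\,j-1}e^{\mu_i y}]$. Set $n=2m+1$ and $g(\mu)\leqdef e^{\mu y}$. The list \eqref{eq:y.dependent.factors} is then
\[
\partial_\mu g(0),\; g(0),\; \partial_\mu g(-1),\; g(-1),\;\ldots,\;\partial_\mu g(-(m-1)),\; g(-(m-1)),\; \partial_\mu g(-m),
\]
because $ye^{-ky}=\partial_\mu e^{\mu y}|_{\mu=-k}$. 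The Wronskian is multilinear in its entries, and each entry is a limit of difference quotients in $\mu$. So $W$ can be obtained from $\Phi(\mu_1,\ldots,\mu_n)\leqdef V(\mu)e^{(\sum\mu_i)y}$ by three operations: apply $\partial_{\mu_{2r+1}}$ to each odd slot $2r+1$ ($r=0,\ldots,m-1$), also apply $\partial_{\mu_n}$, and finally set $\mu_{2r+1}=\mu_{2r+2}=-r$ and $\mu_n=-m$. This is a derivative of a smooth function, so no limiting issues arise. The differentiated slots act on pairwise distinct variables, so this is literally $\partial_{\mu_1}\partial_{\mu_3}\cdots\partial_{\mu_{2m-1}}\partial_{\mu_n}\Phi$ evaluated at the confluent point.

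Next I would expand this derivative by the product rule. Each $\partial_{\mu_i}$ either hits $V$ or hits $e^{(\sum\mu)y}$; the latter produces a factor $y$. After evaluation, the exponential is always
\[
e^{-(2\sum_{r=0}^{m-1} r+m)y}=e^{-m^2y},
\]
which is the claimed exponential factor. Now consider a pair $(2r+1,2r+2)$ whose derivative falls on the exponential rather than on $V$. The remaining factor then contains $(\mu_{2r+2}-\mu_{2r+1})$ undifferentiated, and this vanishes at the confluent point. Hence every pair derivative must land on the factor $(\mu_{2r+2}-\mu_{2r+1})$ of $V$ itself: any other placement leaves that factor intact and gives zero. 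This kills all terms except two. In the first, $\partial_{\mu_n}$ hits the exponential, giving $y\cdot\widetilde V$. In the second, $\partial_{\mu_n}$ hits $V$, giving an $O(1)$ polynomial coefficient; this is the $O(e^{-m^2y})$ remainder. So $W=e^{-m^2y}(\alpha y+\beta)$ exactly, and it remains only to identify $\alpha$.

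For $\alpha$, each pair factor differentiates to $\partial_{\mu_{2r+1}}(\mu_{2r+2}-\mu_{2r+1})=-1$, which produces $(-1)^m$ in total. The surviving factor $\widetilde V$ is the product of all cross differences at the confluent point. Distinct pairs $r<s$ contribute four factors, each equal to $(-s)-(-r)$, so together $(s-r)^4$ with sign $+$. Each pair $r$ against the last variable contributes two factors $(-m+r)$, so $(m-r)^2$. This gives $\alpha=(-1)^m b_m$, matching \eqref{eq:b.m}. The main obstacle is exactly this sign and ordering bookkeeping. Two points need care: that the slot order in \eqref{eq:y.dependent.factors}, with derivative before value, matches $i<j$ in $V$; and that no hidden sign arises from identifying the Wronskian rows with $\mu$-derivatives. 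I would verify both on the case $m=1$ by direct computation of $W_y(y,1,ye^{-y})$: its $y$-coefficient should be $-e^{-y}$, since $b_1=1$.
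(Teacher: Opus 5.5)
Your proposal is correct and is essentially the paper's argument: both realize the Wronskian as a confluent limit of the exponential Vandermonde identity at the exponents $0,-1,\ldots,-m$, and your count of cross differences reproduces $b_m$ exactly. The only difference is mechanical. You differentiate the closed form $V(\mu)e^{(\mu_1+\cdots+\mu_n)y}$ in the exponent parameters, so the sign $(-1)^m$ and the final slot $ye^{-my}$ both come out of one Leibniz expansion, which even gives the exact form $e^{-m^2y}(\alpha y+\beta)$. The paper instead takes difference quotients with $e^{-my}$ left unpaired, reverses the $m$ pairs to get $(-1)^m$, and uses a separate row reduction to isolate the coefficient of $y$.
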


This lemma is a direct specialization of the standard confluent Vandermonde determinant formula; see \citet[p.~117, Eqs.~(1.1)--(1.3)]{Gautschi1962} for the confluent construction (the limiting procedure that generates the $y e^{-ky}$ terms \textit{via} differentiation with respect to the exponent parameters), and \citet[Eq.~(2.4), p.~139]{BatenkovYomdin2013} for the determinant formula. The proof is included for completeness and to keep track of the sign and the coefficient of $y$ in \eqref{eq:mode.Wronskian}.

\begin{proof}[Proof of Lemma~\ref{lem:mode.Wronskian}]
Define $E_{\alpha}(y) \leqdef e^{\alpha y}$. For $L \in \N_0$ and distinct $\beta_0,\ldots,\beta_L$, since
\[
\frac{\rd^j}{\rd y^j}E_{\alpha}(y) = \alpha^j e^{\alpha y},
\]
the ordinary Vandermonde determinant gives
\[
W_y(E_{\beta_0},\ldots,E_{\beta_L}) = e^{(\beta_0 + \cdots + \beta_L)y}\prod_{0\leq i<j\leq L}(\beta_j - \beta_i).
\]
Fix distinct real numbers $\alpha_0,\ldots,\alpha_m$, and take the nonzero $\varepsilon_r$ sufficiently small that all $2m + 1$ exponents $\alpha_0,\alpha_0 + \varepsilon_0,\ldots,\alpha_{m - 1},\alpha_{m - 1} + \varepsilon_{m - 1},\alpha_m$ remain distinct. For each $r = 0,\ldots,m - 1$, pair $\alpha_r$ with $\alpha_r + \varepsilon_r$, while $\alpha_m$ remains unpaired. Subtract the row belonging to $E_{\alpha_r}$ from the row belonging to $E_{\alpha_r + \varepsilon_r}$, divide that row by $\varepsilon_r$, and let $\varepsilon_r \to 0$. The new row converges to the row belonging to
\[
\partial_{\alpha_r}E_{\alpha_r}(y) = yE_{\alpha_r}(y).
\]
In the Vandermonde product, the difference within the $r$th pair is $(\alpha_r + \varepsilon_r) - \alpha_r = \varepsilon_r$, which is canceled by the division of the corresponding row by $\varepsilon_r$. For $r < s$, the four differences between the two exponents in the $r$th pair and the two exponents in the $s$th pair all converge to $\alpha_s - \alpha_r$. The two differences between the exponents in the $r$th pair and the unpaired exponent $\alpha_m$ both converge to $\alpha_m - \alpha_r$. Therefore,
\begin{multline}
W_y(E_{\alpha_0},\partial_{\alpha_0}E_{\alpha_0},\ldots,E_{\alpha_{m - 1}},\partial_{\alpha_{m - 1}}E_{\alpha_{m - 1}},E_{\alpha_m}) \\
= e^{(2\sum_{r=0}^{m - 1}\alpha_r + \alpha_m)y}\prod_{0\leq r<s\leq m - 1}(\alpha_s - \alpha_r)^4\prod_{r=0}^{m - 1}(\alpha_m - \alpha_r)^2. \label{eq:confluent.Vandermonde}
\end{multline}

Substitute $\alpha_r = -r$, $r = 0,\ldots,m$, into \eqref{eq:confluent.Vandermonde}. The first $m$ pairs in the target Wronskian \eqref{eq:mode.Wronskian} are ordered as $(\partial_{\alpha_r}E_{\alpha_r},E_{\alpha_r})$, whereas the corresponding pairs in \eqref{eq:confluent.Vandermonde} are ordered as $(E_{\alpha_r},\partial_{\alpha_r}E_{\alpha_r})$. Reversing these $m$ pairs contributes $(-1)^m$. Also,
\[
2\sum_{r=0}^{m - 1}(-r) - m = -m^2.
\]
To identify the dependence on $y$, note that
\[
\frac{\rd^j}{\rd y^j}(y e^{\alpha y}) = e^{\alpha y}\{y\alpha^j + j\alpha^{j - 1}\}.
\]
Here the second summand is absent when $j = 0$. Consider the evaluated Wronskian matrix in \eqref{eq:mode.Wronskian}. For every $r = 0,\ldots,m - 1$, subtract $y$ times the row associated with $E_{-r}$ from the row associated with $yE_{-r}$. In derivative column $0$, the new entry is zero. For $j \geq 1$, the entry in derivative column $j$ is
\[
j(-r)^{j - 1}e^{-ry}.
\]
Thus, these $m$ rows no longer contain a polynomial factor in $y$. Factor $e^{-ry}$ from each of the two rows belonging to degree $r < m$ and factor $e^{-my}$ from the final row belonging to $ye^{-my}$. The product of the extracted factors is
\[
e^{-\left(2\sum_{r=0}^{m - 1}r + m\right)y} = e^{-m^2y}.
\]
After these factors are removed, only the final row depends on $y$, and it depends affinely on $y$. The coefficient of $y$ in that row is the row associated with $E_{-m}$. Hence, after the first $m$ pairs have been reversed, \eqref{eq:confluent.Vandermonde} shows that the coefficient of $y$ in the Wronskian \eqref{eq:mode.Wronskian} is $(-1)^m b_m e^{-m^2y}$. Its remaining part is a constant multiple of $e^{-m^2y}$, which proves \eqref{eq:mode.Wronskian}.
\end{proof}

It is a consequence of the Cauchy--Binet formula that the determinant of mixed derivatives $D_{2m+1}(\Delta, y)$ is a sum of products, where each product consists of a $y$-Wronskian multiplied by a corresponding $\Delta$-Wronskian. Lemma~\ref{lem:mode.Wronskian} evaluated the $y$-Wronskian for the dominant selection. To complete the asymptotic evaluation in Lemma~\ref{lem:confluent.boundary.asymptotic}, this result must be multiplied by the $\Delta$-Wronskian of the same selection.

Recall that the winning selection consists of the terms $P_k(\lambda)ye^{-ky}$ and $Q_k(\Delta)e^{-ky}$ for $k = 0,\ldots,m - 1$, along with the single term $P_m(\lambda)ye^{-my}$. While Lemma~\ref{lem:mode.Wronskian} isolated their $y$-dependent halves, their corresponding $\Delta$-dependent halves are precisely $P_k(\lambda)$ and $Q_k(\Delta)$. Consequently, the required $\Delta$-Wronskian for this selection is $W_{\Delta}(P_0,Q_0,\ldots,P_{m - 1},Q_{m - 1},P_m)$. To evaluate this $\Delta$-half of the product, the proof of Lemma~\ref{lem:confluent.boundary.asymptotic} applies a change of variables to $\xi = (\tfrac{1}{2} - \Delta)^2$. This converts the $\Delta$-derivatives into $\xi$-derivatives, yielding the relation
\[
W_{\Delta}(P_0,Q_0,\ldots,P_{m - 1},Q_{m - 1},P_m) = \kappa_m(2\Delta - 1)^{m(2m + 1)}W_{\xi}(1,c,\xi,\xi c,\ldots,\xi^{m - 1},\xi^{m - 1}c,\xi^m),
\]
where $W_{\xi}$ denotes a Wronskian with respect to $\xi$, $c(\xi) = -2\gamma - 2\psi(\tfrac{1}{2} - \xi^{1/2})$ as defined in \eqref{eq:xi.c.definition}, and
\begin{equation}\label{eq:kappa.m}
\kappa_m \leqdef \frac{1}{(m!)^2}\prod_{k=0}^{m - 1}\frac{1}{(k!)^4} > 0.
\end{equation}
The next identity explicitly resolves this new $W_{\xi}$ Wronskian. The key observation is that, after the evaluated matrix is transposed, taking successive $\xi$-derivatives of the polynomial terms $\xi^k$ leaves them with only one nonzero entry per column, allowing the determinant to be expanded easily.

\begin{lemma}[Interlaced Wronskian identity]\label{lem:interlaced.Wronskian}
Let $I \subseteq \R$ be an open interval. For $m \in \N$, let $c:I \to \R$ be $2m$ times continuously differentiable. Then, for $x \in I$,
\begin{equation}\label{eq:interlaced.Wronskian}
W_x(1,c,x,xc,\ldots,x^{m - 1},x^{m - 1}c,x^m) = (-1)^m\left(\prod_{r=0}^{2m}r!\right)\det\!\left[\frac{c^{(i + j)}(x)}{(i + j)!}\right]_{i,j=1}^m.
\end{equation}
\end{lemma}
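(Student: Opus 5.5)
The plan is to compute the Wronskian directly. Write the evaluated Wronskian matrix with rows indexed by the functions $f_0=1,\ f_1=c,\ f_2=x,\ f_3=xc,\ \ldots,\ f_{2m-2}=x^{m-1},\ f_{2m-1}=x^{m-1}c,\ f_{2m}=x^m$ and columns by the derivative orders $j=0,\ldots,2m$. Then transpose, so that the rows are derivative orders and the columns are functions. The aim is to clear the monomial columns and leave only a Hankel block in the derivatives of $c$.

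The first step is to exploit the monomial columns. The column of $x^k$ has entry $\binom{j}{k}k!\,x^{k-j}\cdot$ in row $j$, which is nonzero only for $j\le k$; in particular its entry in row $k$ is $k!$. Because $(x^kc)^{(j)}=\sum_\ell \binom{j}{\ell}(x^k)^{(\ell)}c^{(j-\ell)}$, I expect the cleanest route to be the following.

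1. Fix any point $x\in I$. The Wronskian is translation covariant, since the span of $\{x^k, x^kc\}$ is unchanged by replacing $x^k$ with $(t-x)^k$ via triangular unipotent column operations. So I may evaluate using the functions $(t-x)^k$ and $(t-x)^kc(t)$ at $t=x$.
2. At $t=x$, $\partial_t^j(t-x)^k$ equals $k!\,\delta_{jk}$. Also $\partial_t^j[(t-x)^kc(t)]$ equals $\frac{j!}{(j-k)!}c^{(j-k)}(x)$ for $j\ge k$ and $0$ otherwise.
3. Each monomial column now has a single nonzero entry, $k!$, located in row $k$ for $k=0,\ldots,m$. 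Expanding along these columns removes rows $0,\ldots,m$ and contributes $\prod_{k=0}^m k!$ together with a sign.
4. What remains are the rows $j=m+1,\ldots,2m$ and the columns $(t-x)^kc$ for $k=0,\ldots,m-1$. The entries are $\frac{j!}{(j-k)!}c^{(j-k)}(x)$.

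The second step is to put this remaining block in Hankel form. Pull $j!$ out of each row, giving the factor $\prod_{j=m+1}^{2m}j!$. This leaves $\big[c^{(j-k)}/(j-k)!\big]$. Reindex with $k=m-k'$ for $k'=1,\ldots,m$ and $j=m+i$ for $i=1,\ldots,m$, so that $j-k=i+k'$. The result is exactly $\det[c^{(i+j)}/(i+j)!]_{i,j=1}^m$, up to the sign coming from reversing the $m$ columns, which is $(-1)^{m(m-1)/2}$. The constant prefactors combine to $\prod_{r=0}^{2m}r!$, as the statement requires.

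The main obstacle is bookkeeping the overall sign, which must come out to $(-1)^m$. Transposition does not change the determinant, but the Laplace expansion does. The monomial columns sit at positions $0,2,\ldots,2m$ and their pivots in rows $0,\ldots,m$. The $c$-type columns sit at odd positions and are paired with rows $m+1,\ldots,2m$, and the column reversal adds $(-1)^{m(m-1)/2}$ on top.

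I would compute the sign of the permutation that sends $(\text{row } k \to \text{column } 2k)$ for $k\le m$ and $(\text{row } m+i\to \text{column } 2(m-i)+1)$. The plan is to count inversions, or equivalently to check small cases ($m=1$: the Wronskian of $1,c,x$ is $-2c''/2\cdot 1$, which gives $(-1)^1$ times $0!1!2!\,c''/2!$, consistent) and then match the general inversion count to $m \bmod 2$.
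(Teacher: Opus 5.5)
Your argument is the paper's own: translate to the base point by a unipotent triangular change of basis, transpose, Laplace-expand along the monomial columns (each has its single nonzero entry $k!$ in row $k$), and reverse the surviving $c$-columns to obtain the Hankel block. The factorials then combine to $\prod_{r=0}^{2m}r!$; the paper simply divides row $r$ by $r!$ at the outset rather than extracting $k!$ and $j!$ separately.

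The one step you left open is the sign, and checking small cases cannot establish it, but it is a short count. Your permutation $\tau(k)=2k$ for $0\le k\le m$, $\tau(m+i)=2(m-i)+1$ for $1\le i\le m$, has $\binom{m}{2}$ inversions inside the $c$-block. For each $i$ it also has exactly $i$ cross inversions, namely the pairs $(k,m+i)$ with $k\ge m-i+1$. That gives $m(m-1)/2+m(m+1)/2=m^2$ inversions and sign $(-1)^{m^2}=(-1)^m$, which agrees with the paper's product $(-1)^{m(m+1)/2+m(m+1)}\cdot(-1)^{m(m-1)/2}$ of the Laplace sign and the reversal sign.
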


\begin{proof}[Proof of Lemma~\ref{lem:interlaced.Wronskian}]
Fix $x_0 \in I$, let $t = x - x_0$, and define $\widetilde{c}(t) \leqdef c(x_0 + t)$. Translation of the independent variable does not change the derivatives, so the Wronskian at $x = x_0$ equals the corresponding Wronskian at $t = 0$. Within the polynomial part of the interlaced list, the change-of-basis matrix from $1,x,\ldots,x^m$ to $1,t,\ldots,t^m$ is triangular with ones on the main diagonal. Consequently, this substitution preserves the value of the determinant. The same is true of the replacement of $c(x),xc(x),\ldots,x^{m - 1}c(x)$ by $\widetilde{c}(t),t\widetilde{c}(t),\ldots,t^{m - 1}\widetilde{c}(t)$. Hence, the entire ordered list is transformed by a matrix with determinant one, so its Wronskian is unchanged. It is therefore enough to prove the identity at $t = 0$. For notational simplicity, rename $t$ as $x$ and $\widetilde{c}$ as $c$.

Transpose the Wronskian matrix and divide the row corresponding to derivative order $r$ by $r!$, for $r = 0,\ldots,2m$. The column belonging to $x^k$ then has its only nonzero entry, equal to one, in row $k$. For $r \geq k$, the entry in row $r$ of the column belonging to $x^k c$ is
\[
\frac{1}{r!}\frac{\rd^r}{\rd x^r}\{x^k c(x)\}\bigg|_{x=0} = \frac{c^{(r - k)}(0)}{(r - k)!},
\]
and it is zero for $r < k$. This arrangement of the interlaced basis functions $1, c, x, xc, \ldots, x^m$ as columns and the derivative orders $r$ as rows makes explicit the sparse structure of the matrix:
\[
\begin{pmatrix}
1      & c(0)                      & 0      & 0                           & \cdots & 0 \\[0.5ex]
0      & c'(0)                     & 1      & c(0)                        & \cdots & 0 \\[1ex]
0      & \dfrac{c''(0)}{2!}         & 0      & c'(0)                       & \cdots & 0 \\
\vdots & \vdots                    & \vdots & \vdots                      & \ddots & \vdots \\
0      & \dfrac{c^{(m)}(0)}{m!}     & 0      & \dfrac{c^{(m-1)}(0)}{(m-1)!} & \cdots & 1 \\
\vdots & \vdots                    & \vdots & \vdots                      & \ddots & \vdots \\
0      & \dfrac{c^{(2m)}(0)}{(2m)!} & 0      & \dfrac{c^{(2m-1)}(0)}{(2m-1)!} & \cdots & 0
\end{pmatrix}.
\]
Since every column associated with a pure polynomial $x^k$ contains exactly one nonzero entry (the $1$ on the staggered diagonal), a Laplace expansion along these columns trivially reduces the determinant to the remaining $c$-dependent entries.

We now expand the normalized determinant along the polynomial columns $1,x,\ldots,x^m$. These columns occupy the zero-based positions $0,2,\ldots,2m$ and select the rows $0,1,\ldots,m$. The sign of this expansion is
\[
(-1)^{\{0 + \cdots + m\} + \{0 + 2 + \cdots + 2m\}} = (-1)^{m(m + 1)/2 + m(m + 1)}.
\]
The rows that remain have orders $m + 1,\ldots,2m$, and the remaining columns are $c,xc,\ldots,x^{m - 1}c$. Reverse these $m$ columns, which contributes $(-1)^{m(m - 1)/2}$. If the remaining row is indexed as $m + i$ and the reversed column as $x^{m - j}c$, where $1 \leq i,j \leq m$, its entry is
\[
\frac{c^{(m + i - (m - j))}(0)}{(m + i - (m - j))!} = \frac{c^{(i + j)}(0)}{(i + j)!}.
\]
The combined sign is
\[
(-1)^{m(m + 1)/2 + m(m + 1) + m(m - 1)/2} = (-1)^{m(2m + 1)} = (-1)^m.
\]
Finally, restoring the row factors that were divided out multiplies the determinant by $\prod_{r=0}^{2m}r!$. Translating back proves \eqref{eq:interlaced.Wronskian} at the original point $x_0$. Since $x_0$ was arbitrary, the result of the lemma follows.
\end{proof}

\subsection{Proof of Lemma~\ref{lem:confluent.boundary.asymptotic}}

Let $N \leqdef 2m + 1$. Write \eqref{eq:q.mode.expansion} as a sum of separated terms by defining, for $k \in \N_0$,
\[
\begin{aligned}
A_{2k}(\Delta) &\leqdef P_k(\lambda), & A_{2k + 1}(\Delta) &\leqdef Q_k(\Delta), \\
B_{2k}(y) &\leqdef ye^{-ky}, & B_{2k + 1}(y) &\leqdef e^{-ky}.
\end{aligned}
\]
For a term in the series indexed by $r$, let
\[
d(r) \leqdef \left\lfloor\frac{r}{2}\right\rfloor
\]
denote its original summation index $k$. For each $r \in \N_0$, we call the function $(\Delta,y) \mapsto A_r(\Delta)B_r(y)$ \textit{mode} $r$, and we call $d(r)$ its \textit{exponential degree}. Thus, each index $r$ identifies a single mode; the repetition occurs only among the exponential degrees, since the two distinct modes $2k$ and $2k + 1$ both have exponential degree $k$.

The proof has three steps. First, by applying the discrete Cauchy--Binet formula, we obtain the determinant of mixed derivatives of a truncated kernel (see \eqref{eq:truncated.kernel}) in the form of a sum over sets of $N = 2m + 1$ distinct mode indices. Second, we use the sum of the selected exponential degrees to identify the slowest exponential decay. The minimum total degree is $m^2$ and is attained by exactly two selections, whose index sets are
\[
\{0,1,\ldots,2m - 1,2m\} \quad\text{and}\quad \{0,1,\ldots,2m - 1,2m + 1\}.
\]
Both selections contain the two modes of each degree $0,\ldots,m - 1$ and exactly one of the two modes of degree $m$. The first contains mode $2m$, whose $y$-dependent factor is $B_{2m}(y) = ye^{-my}$, whereas the second contains mode $2m + 1$, whose $y$-dependent factor is $B_{2m + 1}(y) = e^{-my}$. As shown by the row operations below, the paired factors of each lower degree cannot contribute an uncanceled factor $y$. Consequently, only the first selection can contribute a term proportional to $y e^{-m^2y}$. Third, for this first selection, Lemma~\ref{lem:mode.Wronskian} evaluates the $y$-Wronskian
\[
W_y(B_0,\ldots,B_{2m}) = W_y(y,1,ye^{-y},e^{-y},\ldots,ye^{-(m - 1)y},e^{-(m - 1)y},ye^{-my}),
\]
while the triangular calculation below and Lemma~\ref{lem:interlaced.Wronskian} evaluate the corresponding $\Delta$-Wronskian
\[
W_{\Delta}(A_0,\ldots,A_{2m}) = W_{\Delta}(P_0,Q_0,\ldots,P_{m - 1},Q_{m - 1},P_m).
\]

Fix $K \geq m$ and define the truncated kernel
\begin{equation}\label{eq:truncated.kernel}
q_K(\Delta,y) \leqdef \sum_{r=0}^{2K + 1}A_r(\Delta)B_r(y).
\end{equation}
Its $N \times N$ matrix of mixed derivatives factors as
\[
\left[\partial_{\Delta}^i\partial_y^j q_K(\Delta,y)\right]_{i,j=0}^{N - 1}
= \left[\partial_{\Delta}^i A_r(\Delta)\right]_{\substack{0\leq i\leq N - 1\\0\leq r\leq 2K + 1}}
\left[\partial_y^j B_r(y)\right]_{\substack{0\leq r\leq 2K + 1\\0\leq j\leq N - 1}}.
\]
For selected indices $0 \leq r_1 < \cdots < r_N \leq 2K + 1$, the corresponding minor of the first factor is the transpose of the Wronskian matrix $[\partial_{\Delta}^i A_{r_{\ell}}(\Delta)]_{1\leq \ell\leq N, 0\leq i\leq N - 1}$, so its determinant is $W_{\Delta}(A_{r_1},\ldots,A_{r_N})$. The corresponding minor of the second factor is the Wronskian matrix of $B_{r_1},\ldots,B_{r_N}$, so its determinant is $W_y(B_{r_1},\ldots,B_{r_N})$. The finite Cauchy--Binet formula therefore gives
\begin{equation}\label{eq:finite.CB.boundary}
\det\!\left[\partial_{\Delta}^i\partial_y^j q_K(\Delta,y)\right]_{i,j=0}^{N - 1}
= \sum_{0\leq r_1<\cdots<r_N\leq 2K + 1}W_{\Delta}(A_{r_1},\ldots,A_{r_N})W_y(B_{r_1},\ldots,B_{r_N}).
\end{equation}

We next justify passage to the infinite series. Fix a compact interval $J \subseteq (0,1/2)$ of $\Delta$-values. By Lemma~\ref{lem:q.boundary.expansion}, the two generating series $\sum_{k=0}^{\infty}P_k(\lambda)t^k$ and $\sum_{k=0}^{\infty}Q_k(\Delta)t^k$, together with their $\Delta$-derivatives of orders $0,\ldots,N - 1$, converge uniformly for $\Delta \in J$ and $t$ in compact subsets of the unit disk $|t| < 1$.

Recall our earlier definition that $A_{2k}(\Delta) = P_k(\lambda)$ and $A_{2k+1}(\Delta) = Q_k(\Delta)$. To bound the growth of these terms, we rely on Cauchy's coefficient estimate, which bounds the $k$-th coefficient of an analytic function by its maximum modulus on a circle of radius $R$, divided by $R^k$. For any fixed radius $R \in (0,1)$, the uniform convergence on the compact domain $J \times \{|t| = R\}$ guaranteed by Lemma~\ref{lem:q.boundary.expansion} implies that these generating series and their derivatives are bounded by a universal constant $C$. Applying Cauchy's estimate to extract the coefficients of the generating functions yields
\[
\left|\partial_{\Delta}^i A_{2k}(\Delta)\right| + \left|\partial_{\Delta}^i A_{2k + 1}(\Delta)\right| \leq C R^{-k}, \qquad 0 \leq i \leq N - 1.
\]
For the $y$-dependent functions, direct differentiation gives
\[
\left|\partial_y^j B_{2k}(y)\right| + \left|\partial_y^j B_{2k + 1}(y)\right| \leq C(k + 1)^N(y + 1)e^{-ky}, \qquad 0 \leq j \leq N - 1.
\]
After both Wronskians in each summand of \eqref{eq:finite.CB.boundary} are expanded as sums over permutations, the absolute value of the product corresponding to selected mode indices $r_1,\ldots,r_N$, with degrees $k_j = d(r_j)$, is bounded by a constant times
\[
(y + 1)^N \prod_{j=1}^N(k_j + 1)^N \left(\frac{e^{-y}}{R}\right)^{k_1 + \cdots + k_N}.
\]
For fixed $N$, the number of selections with $k_1 + \cdots + k_N = \ell$ is bounded by $(\ell + 1)^{C_N}$ for some constant $C_N$ depending only on $N$. Thus, when $y$ is large enough that $e^{-y} < R$, the Cauchy--Binet sum converges absolutely and uniformly for $\Delta \in J$. The mixed derivatives of $q_K$ also converge to the corresponding mixed derivatives of $q$. Letting $K \to \infty$ in \eqref{eq:finite.CB.boundary} therefore gives the absolutely convergent expansion
\[
D_N(\Delta,y) = \sum_{0\leq r_1<\cdots<r_N}W_{\Delta}(A_{r_1},\ldots,A_{r_N})W_y(B_{r_1},\ldots,B_{r_N}).
\]

Since there are two modes of each exponential degree $k$, the smallest possible sum of the degrees of $2m + 1$ distinct modes is
\[
2\sum_{k=0}^{m - 1}k + m = m^2.
\]
Exactly two selections attain this value. Selecting the indices $0,\ldots,2m$ gives the corresponding $\Delta$-list and $y$-list:
\[
\begin{aligned}
\Delta\text{-list:}& \quad (P_0,Q_0,\ldots,P_{m - 1},Q_{m - 1},P_m), \\
y\text{-list:}& \quad (y,1,\ldots,ye^{-(m - 1)y},e^{-(m - 1)y},ye^{-my}).
\end{aligned}
\]
The corresponding Cauchy--Binet summand is the product of the Wronskians of these two lists. Lemma~\ref{lem:mode.Wronskian} shows that its $y$-Wronskian is $(-1)^m b_m y e^{-m^2y} + O(e^{-m^2y})$. The other minimizing selection consists of the indices $0,\ldots,2m - 1,2m + 1$; it replaces $P_m$ by $Q_m$ in the $\Delta$-list and $ye^{-my}$ by $e^{-my}$ in the $y$-list. In the $y$-Wronskian for this second selection, for every $k < m$, subtract $y$ times the row associated with $e^{-ky}$ from the row associated with $ye^{-ky}$. At a fixed value of $y$, this is an ordinary row operation on the evaluated Wronskian matrix. It removes every polynomial factor in $y$. Factoring $e^{-ky}$ from each of the two rows of degree $k < m$ and $e^{-my}$ from the last row then gives the total factor
\[
e^{-\left(2\sum_{k=0}^{m - 1}k + m\right)y} = e^{-m^2y}.
\]
After the row operations and the extraction of the exponential factors, the $y$-Wronskian is a constant multiple of $e^{-m^2y}$. The $\Delta$-Wronskian is independent of $y$ and is bounded when $\Delta$ ranges over a compact subinterval of $(0,1/2)$. Hence, the complete Cauchy--Binet summand for the second minimizing selection is $e^{-m^2y}O(1)$. The first minimizing selection is therefore the only one that can contribute a term proportional to $y e^{-m^2y}$.

The estimates above show that, uniformly for $\Delta$ in the compact set $J$, the sum of all these remaining contributions is bounded by
\begin{equation}\label{eq:tail.bound}
C(y + 1)^N\sum_{\ell\geq m^2 + 1}(\ell + 1)^{C_N}\left(\frac{e^{-y}}{R}\right)^{\ell} = O((y + 1)^Ne^{-(m^2 + 1)y}) = e^{-m^2y}O(1),
\end{equation}
where the implicit constants in the $O$-notation hold as $y \to \infty$ and are uniform with respect to $\Delta \in J$.

It remains to calculate the $\Delta$-Wronskian in the first minimizing selection. Recall from \eqref{eq:h.definition} that $\lambda = \Delta(\Delta - 1)$ and $h(\Delta) = -2\gamma - 2\psi(\Delta)$. Let $a_k \leqdef (k!)^{-2}$ and consider the ordered list
\[
\mathcal{U}_m \leqdef (1,h,\lambda,\lambda h,\ldots,\lambda^{m - 1},\lambda^{m - 1}h,\lambda^m).
\]
For $0 \leq k \leq m$, the polynomial $P_k$ has degree $k$ with leading coefficient $a_k$. It expands as $P_k(\lambda) = a_k\lambda^k + \smash{\sum_{j=0}^{k - 1}p_{k,j}\lambda^j}$ for suitable real constants $p_{k,j}$. For $0 \leq k < m$, the decomposition $Q_k = P_k h + S_k$ with $\deg S_k \leq k$ from Lemma~\ref{lem:q.boundary.expansion} guarantees that $Q_k$ consists of the leading term $a_k\lambda^kh$ plus a linear combination of strictly lower-order functions.

Consequently, expressing the ordered list $(P_0,Q_0,\ldots,P_{m - 1},Q_{m - 1},P_m)^{\top}$ in terms of the interlaced basis $\mathcal{U}_m = (1,h,\lambda,\lambda h,\ldots,\lambda^m)^{\top}$ results in a change-of-basis matrix that is lower-triangular. With asterisks $(*)$ denoting the remaining constants from the lower-order terms, this linear transformation takes the explicit form:
\[
\begin{pmatrix}
P_0 \\ Q_0 \\ P_1 \\ Q_1 \\ \vdots \\ P_{m - 1} \\ Q_{m - 1} \\ P_m
\end{pmatrix}
=
\begin{pmatrix}
a_0 & 0   & 0   & 0   & \cdots & 0 & 0 & 0 \\
*   & a_0 & 0   & 0   & \cdots & 0 & 0 & 0 \\
*   & 0   & a_1 & 0   & \cdots & 0 & 0 & 0 \\
*   & *   & *   & a_1 & \cdots & 0 & 0 & 0 \\
\vdots & \vdots & \vdots & \vdots & \ddots & \vdots & \vdots & \vdots \\
*   & 0   & *   & 0   & \cdots & a_{m - 1} & 0 & 0 \\
*   & *   & *   & *   & \cdots & * & a_{m - 1} & 0 \\
*   & 0   & *   & 0   & \cdots & * & 0 & a_m
\end{pmatrix}
\begin{pmatrix}
1 \\ h \\ \lambda \\ \lambda h \\ \vdots \\ \lambda^{m - 1} \\ \lambda^{m - 1}h \\ \lambda^m
\end{pmatrix}.
\]
Since the determinant of a triangular matrix is the product of its diagonal entries, this change of basis scales the corresponding Wronskian exactly by the factor $a_0^2 a_1^2 \cdots a_{m - 1}^2 a_m$.
The determinant of this lower-triangular change of basis is $a_m\prod_{k=0}^{m - 1}a_k^2 = \kappa_m$, where $\kappa_m$ was defined in \eqref{eq:kappa.m}. Therefore,
\begin{equation}\label{eq:lower.triangular.reduction}
W_{\Delta}(P_0,Q_0,P_1,Q_1,\ldots,P_{m - 1},Q_{m - 1},P_m) = \kappa_m W_{\Delta}(1,h,\lambda,\lambda h,\ldots,\lambda^{m - 1},\lambda^{m - 1}h,\lambda^m).
\end{equation}

On the interval $0 < \Delta < 1/2$, equations~\eqref{eq:h.definition} and~\eqref{eq:xi.c.definition} yield
\[
h(\Delta) = c(\xi), \qquad \lambda = \Delta(\Delta - 1) = \xi - \frac{1}{4}.
\]
Consequently, the ordered list of $\Delta$-dependent functions $(1,h,\lambda,\lambda h,\ldots,\lambda^{m - 1},\lambda^{m - 1}h,\lambda^m)$ is obtained from the corresponding $\xi$-dependent list $(1,c,\xi,\xi c,\ldots,\xi^{m - 1},\xi^{m - 1}c,\xi^m)$ by a constant triangular change of basis with diagonal entries equal to one. More generally, the change-of-variable formula for Wronskians proved by \citet[Theorem~27, pp.~98--99]{Kiselev2004} gives
\[
W_{\Delta}(f_1\circ\xi,\ldots,f_N\circ\xi) = (\xi'(\Delta))^{N(N - 1)/2}W_{\xi}(f_1,\ldots,f_N).
\]
Since $N = 2m + 1$ and $\xi'(\Delta) = 2\Delta - 1$, it follows that
\begin{equation}\label{eq:change.of.variables}
\begin{aligned}
&W_{\Delta}(1,h,\lambda,\lambda h,\ldots,\lambda^{m - 1},\lambda^{m - 1}h,\lambda^m) \\
&\hspace{30mm}= (2\Delta - 1)^{m(2m + 1)}W_{\xi}(1,c,\xi,\xi c,\ldots,\xi^{m - 1},\xi^{m - 1}c,\xi^m).
\end{aligned}
\end{equation}
Combining the lower-triangular reduction \eqref{eq:lower.triangular.reduction}, the change of variables \eqref{eq:change.of.variables}, and Lemma~\ref{lem:interlaced.Wronskian} gives the $\Delta$-Wronskian in the first minimizing selection as
\begin{equation}\label{eq:W.Delta}
W_{\Delta}(P_0,Q_0,\ldots,P_{m - 1},Q_{m - 1},P_m) = \kappa_m(2\Delta - 1)^{m(2m + 1)}(-1)^m\left(\prod_{r=0}^{2m}r!\right)\det H_m(\xi).
\end{equation}
For the same selection, Lemma~\ref{lem:mode.Wronskian} gives its $y$-Wronskian as
\begin{equation}\label{eq:W.y}
W_y(y,1,\ldots,ye^{-(m - 1)y},e^{-(m - 1)y},ye^{-my}) = (-1)^m b_m y e^{-m^2y} + O(e^{-m^2y}).
\end{equation}
The factor $(-1)^m$ in \eqref{eq:W.Delta} cancels the corresponding factor in the leading term of \eqref{eq:W.y}. Hence, the product belonging to the first minimizing selection is
\begin{equation}\label{eq:first.min.contrib}
C_m e^{-m^2y}\left\{(2\Delta - 1)^{m(2m + 1)}y\det H_m(\xi) + O(1)\right\},
\end{equation}
where $b_m$ is defined in \eqref{eq:b.m}, $\kappa_m$ is defined in \eqref{eq:kappa.m}, and
\[
C_m
\leqdef \kappa_m b_m\prod_{r=0}^{2m}r!
= \left\{\frac{1}{(m!)^2}\prod_{k=0}^{m - 1}\frac{1}{(k!)^4}\right\}\left\{\prod_{0\leq r<s\leq m - 1}(s - r)^4\prod_{r=0}^{m - 1}(m - r)^2\right\}\prod_{r=0}^{2m}r! > 0.
\]
For the compact interval $J \subseteq (0,1/2)$ fixed above, the $O(1)$ inside the braces in the contribution of the first minimizing selection comes from the part of its $y$-Wronskian without a factor $y$ in \eqref{eq:W.y} and is uniform for $\Delta \in J$. The complete contribution of the second minimizing selection is $e^{-m^2y}O(1)$, uniformly for $\Delta \in J$, by the preceding row calculation and the local boundedness of its $\Delta$-Wronskian. The sum of all selections of larger exponential degree is also $e^{-m^2y}O(1)$, uniformly for $\Delta \in J$, by the tail bound in \eqref{eq:tail.bound}. Since $C_m > 0$ is fixed, these last two contributions can be absorbed into the $O(1)$ inside the braces in \eqref{eq:first.min.contrib}. This proves \eqref{eq:D.asymptotic} locally uniformly in $\Delta$ and concludes the proof of Lemma~\ref{lem:confluent.boundary.asymptotic}.

\subsection{Nonvanishing of \texorpdfstring{$\det H_m$}{det H\_m}}

Recall that $\xi = (\tfrac{1}{2} - \Delta)^2$, and recall the definition of $H_m(\xi)$ from \eqref{eq:H.m}. Lemma~\ref{lem:confluent.boundary.asymptotic} shows that $\det H_m(\xi)$ controls the sign of the leading large-$y$ term in $D_{2m + 1}$. The proof of Lemma~\ref{lem:H.sign.obstruction} requires a single value of $\Delta$ at which this determinant is nonzero for every $m$. The next lemma makes such a choice possible: for each fixed $m$, the zero set $\{\xi \in (0, 1/4) : \det H_m(\xi) = 0\}$ is discrete, so the countable union of these zero sets can be avoided.

\begin{lemma}[Nonvanishing of $\det H_m$]\label{lem:H.nonzero}
For every $m \in \N$, the function $\xi\mapsto \det H_m(\xi)$ is real analytic and is not identically zero on $(0,1/4)$.
\end{lemma}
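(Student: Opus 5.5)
We prove the two assertions separately: real analyticity is immediate, and for non-vanishing we study $\det H_m(\xi)$ as $\xi\to0^{+}$. There the square root in the definition of $c$ makes every derivative of $c$ blow up at an explicit rate, and the leading term reduces to a shifted Hankel determinant of Catalan numbers.

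\emph{Analyticity.} On $(0,1/4)$ the map $\xi\mapsto\xi^{1/2}$ is real analytic, and $\tfrac12-\xi^{1/2}$ takes values in $(0,1/2)$. Since $\psi$ is analytic on $(0,\infty)$, the function $c(\xi)=-2\gamma-2\psi(\tfrac12-\xi^{1/2})$ is real analytic on $(0,1/4)$. Each entry $c^{(i+j)}(\xi)/(i+j)!$ is then real analytic, and $\det H_m(\xi)$ is a polynomial in these entries, hence real analytic.

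\emph{Splitting $c$ near $\xi=0$.} Put $u=\xi^{1/2}$ and $F(u)\leqdef-2\gamma-2\psi(\tfrac12-u)$, which is analytic for $|u|<1/2$ with $F'(0)=2\psi'(\tfrac12)=\pi^2$. Split $F$ into its even and odd parts in $u$ to obtain
\[
c(\xi)=A(\xi)+\xi^{1/2}B(\xi),
\]
where $A$ and $B$ are analytic in a neighbourhood of $\xi=0$ and $B(0)=\pi^2$. As $\xi\to0^{+}$, each $A^{(n)}(\xi)$ stays bounded. By the Leibniz rule,
\[
\frac{(\xi^{1/2}B)^{(n)}(\xi)}{n!}=\pi^2\binom{1/2}{n}\xi^{1/2-n}\bigl(1+O(\xi)\bigr),
\]
and for $n\ge1$ the bounded term $A^{(n)}/n!$ is $O(\xi^{n-1/2})$ relative to this. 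So, with $k=i+j\ge2$,
\[
\frac{c^{(k)}(\xi)}{k!}=\pi^2\binom{1/2}{k}\xi^{1/2-k}\bigl(1+O(\xi^{1/2})\bigr).
\]

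\emph{Extracting the leading term.} Factor $\xi^{1/4-i}$ from row $i$ and $\xi^{1/4-j}$ from column $j$. This gives
\[
\det H_m(\xi)=\pi^{2m}\xi^{m/2-m(m+1)}\Bigl(\det\Bigl[\tbinom{1/2}{i+j}\Bigr]_{i,j=1}^m+o(1)\Bigr),\qquad \xi\to0^{+}.
\]
It therefore suffices to show that the constant Hankel determinant on the right is nonzero.

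For $k\ge1$ we have the identity $\binom{1/2}{k}=(-1)^{k-1}C_{k-1}\,2^{1-2k}$, where $C_n$ is the $n$th Catalan number. With $k=i+j$, the sign factors as $(-1)^{i+j-1}=-(-1)^i(-1)^j$, and the power of two factors as $2^{1-2i-2j}=2\cdot4^{-i}4^{-j}$. These are diagonal row and column scalings times an overall constant, so
\[
\det\Bigl[\tbinom{1/2}{i+j}\Bigr]_{i,j=1}^m=\pm\,2^{m}4^{-m(m+1)}\det\bigl[C_{i+j-1}\bigr]_{i,j=1}^m .
\]
Reindexing with $i'=i-1$ and $j'=j-1$ gives $\det[C_{i'+j'+1}]_{i',j'=0}^{m-1}$. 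This is the classical shifted Hankel determinant of Catalan numbers, which equals $1$ (for instance, by the Lindström--Gessel--Viennot lemma applied to Dyck paths, or from the continued fraction of the Catalan generating function).

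Hence $\det H_m(\xi)\neq0$ for all sufficiently small $\xi>0$, so it is not identically zero on $(0,1/4)$. The main step requiring care is the relative-error bookkeeping in the asymptotic for $c^{(k)}(\xi)/k!$. It has to be uniform enough that the $o(1)$ perturbation survives the $m\times m$ determinant expansion, and this holds because every entry carries a relative error $O(\xi^{1/2})$ after the row and column scalings.
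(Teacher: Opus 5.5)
Your proof is correct. Its first two stages match the paper: real analyticity via $\tfrac12-\xi^{1/2}\in(0,1/2)$, then the $\xi\downarrow0$ asymptotics $c^{(k)}(\xi)/k!=\pi^2\binom{1/2}{k}\xi^{1/2-k}(1+o(1))$, then row and column scaling to reduce everything to $\det\bigl[\binom{1/2}{i+j}\bigr]_{i,j=1}^m$. Your even/odd splitting $c=A+\xi^{1/2}B$ is a tidier packaging of the paper's separation of integer and half-integer powers in the Taylor series of $\psi$ at $1/2$. Your relative error $O(\xi^{1/2})$ is weaker than the paper's $O(\xi)$, but it suffices.

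The genuine difference is how you show the constant Hankel determinant is nonzero.
\begin{itemize}
\item The paper uses the beta integral $\bigl|\binom{1/2}{i+j}\bigr|=\pi^{-1}\int_0^1 x^{i+j-3/2}(1-x)^{1/2}\,\mathrm{d}x$. This makes the matrix $M_m$ of absolute values the Gram matrix of $1,x,\ldots,x^{m-1}$ for the weight $\pi^{-1}x^{1/2}(1-x)^{1/2}$, so $M_m$ is positive definite. The checkerboard signs are then absorbed as $-DM_mD$, giving determinant $(-1)^m\det M_m\neq0$.
\item You instead use $\binom{1/2}{k}=(-1)^{k-1}C_{k-1}2^{1-2k}$ and the classical evaluation $\det[C_{i+j+1}]_{i,j=0}^{m-1}=1$. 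Both steps check out, and they give the exact value $(-1)^m2^m4^{-m(m+1)}$.
\end{itemize}

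The two routes rest on the same fact. $C_{n+1}$ is the $n$th moment of $\tfrac{1}{2\pi}\sqrt{x(4-x)}\,\mathrm{d}x$ on $[0,4]$, which rescales exactly to the paper's weight. Hence $\det M_m=2^m4^{-m(m+1)}$, and your computation is an explicit evaluation of the paper's Gram determinant.

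What each route buys:
\begin{itemize}
\item Yours yields the precise leading term $\det H_m(\xi)\sim(-1)^m\pi^{2m}2^m4^{-m(m+1)}\xi^{-m^2-m/2}$. The cost is importing a combinatorial identity (Lindstr\"om--Gessel--Viennot or the Catalan continued fraction) that you cite rather than prove.
\item The paper's Gram-matrix argument is self-contained and gives only the sign and nonvanishing, which is all the lemma requires.
\end{itemize}
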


\begin{proof}[Proof of Lemma~\ref{lem:H.nonzero}]
Fix $m \in \N$. The required nontriviality will follow from the behavior of $\det H_m(\xi)$ as $\xi \downarrow 0$.

The description of the digamma function in \cite[\S5.2(i)]{AskeyRoy2010} shows that its poles are $0,-1,-2,\ldots$. Hence, the nearest pole to $1/2$ is $0$, at distance $1/2$, and the Taylor series of $\psi$ about $1/2$ converges whenever the increment has absolute value less than $1/2$. Since $\xi^{1/2} < 1/2$ for $0 < \xi < 1/4$, then the function $c$ in \eqref{eq:xi.c.definition} has the convergent expansion
\begin{equation}\label{eq:c.sqrt.expansion}
c(\xi) = -2\gamma - 2\sum_{\ell=0}^{\infty} \frac{\psi^{(\ell)}(1/2)}{\ell!} (-\xi^{1/2})^{\ell}, \qquad 0 < \xi < \frac{1}{4}.
\end{equation}
By \citet[Eq.~5.15.4]{AskeyRoy2010}, $\psi^{(1)}(1/2) = \pi^2/2$, so the coefficient of $\xi^{1/2}$ in \eqref{eq:c.sqrt.expansion} is $\pi^2$. Separating the integer and noninteger powers in the convergent series \eqref{eq:c.sqrt.expansion} and differentiating that series term by term shows that, for every fixed integer $r \geq 2$,
\begin{equation}\label{eq:c.derivative.near.zero}
\begin{aligned}
\frac{c^{(r)}(\xi)}{r!}
&= \frac{\pi^2}{r!} \left(\frac{1}{2}\right)\left(\frac{1}{2} - 1\right)\cdots\left(\frac{1}{2} - r + 1\right)\xi^{\frac{1}{2} - r} + O(\xi^{\frac{3}{2} - r}) \\
&= \pi^2\binom{1/2}{r}\xi^{\frac{1}{2} - r}(1 + O(\xi))
\end{aligned}
\end{equation}
as $\xi \downarrow 0$. The next noninteger power is $\xi^{3/2}$, whose $r$th derivative is $O(\xi^{\frac{3}{2} - r})$ and is therefore $O(\xi)$ relative to the leading term. An integer power $\xi^j$ vanishes after $r$ derivatives when $j < r$; when $j \geq r$, its contribution is $O(\xi^{j - r})$, which is $O(\xi^{3/2})$ relative to $\xi^{\frac{1}{2} - r}$ because $r \geq 2$. This proves the relative error estimate in \eqref{eq:c.derivative.near.zero}. In particular,
\[
\frac{c^{(i + j)}(\xi)}{(i + j)!} = \pi^2\binom{1/2}{i + j}\xi^{\frac{1}{2} - i - j}(1 + O(\xi)), \qquad 1 \leq i,j \leq m.
\]
After $\pi^2\xi^{\frac{1}{2} - i}$ is factored from row $i$ and $\xi^{-j}$ is factored from column $j$, the $(i,j)$th entry of the remaining matrix converges to $\binom{1/2}{i + j}$. Continuity of the determinant therefore gives
\begin{equation}\label{eq:H.near.zero}
\det H_m(\xi) = (\pi^2)^m\xi^{-m^2 - m/2}\left\{\det\!\left[\binom{1/2}{i + j}\right]_{i,j=1}^m + o(1)\right\}.
\end{equation}
Indeed, the exponent of $\xi$ that has been factored out is $\sum_{i=1}^m\left(\frac{1}{2} - i\right) - \sum_{j=1}^m j = -m^2 - m/2$.

It remains to show that the determinant in braces in \eqref{eq:H.near.zero} is nonzero. For $i,j \geq 1$, the Gamma function definition, Euler's reflection formula, and the beta integral representation give
\begin{equation}\label{eq:binomial.beta.integral}
\begin{aligned}
\left|\binom{1/2}{i + j}\right|
&= \left| \frac{\Gamma(3/2)}{\Gamma(i + j + 1)\Gamma(\frac{3}{2} - i - j)} \right| \\
&= \frac{\Gamma(i + j - \frac{1}{2})\Gamma(3/2)}{\pi\,\Gamma(i + j + 1)} = \frac{1}{\pi}\int_0^1 x^{i + j - \frac{3}{2}}(1 - x)^{1/2} \, \rd x.
\end{aligned}
\end{equation}
Let $M_m$ be the matrix formed from the absolute values in \eqref{eq:binomial.beta.integral}. For every nonzero vector $\bb{v} = (v_1,\ldots,v_m) \in \R^m$,
\[
\begin{aligned}
\bb{v}^{\mathsf T}M_m \bb{v}
&= \frac{1}{\pi}\int_0^1 x^{1/2}(1 - x)^{1/2}\left(\sum_{i=1}^m v_i x^{i - 1}\right)^2 \, \rd x > 0.
\end{aligned}
\]
Thus, $M_m$ is the Gram matrix of the linearly independent functions $1,x,\ldots,x^{m - 1}$ with respect to the weight $\pi^{-1}x^{1/2}(1 - x)^{1/2}\,\rd x$, whose density is nonnegative on $[0,1]$ and strictly positive on $(0,1)$. Hence, $M_m$ is positive definite and $\det M_m > 0$. Moreover, the generalized binomial-coefficient formula in \cite[p.~154, Eq.~(5.1)]{GrahamKnuthPatashnik1994} shows that
\[
\binom{1/2}{i + j} = \frac{(\frac{1}{2})(-\frac{1}{2})\cdots(\tfrac{1}{2} - (i + j) + 1)}{(i + j)!} = (-1)^{i + j - 1} \left|\binom{1/2}{i + j}\right|, \qquad i,j \geq 1.
\]
If $D = \operatorname{diag}((-1)^1,\ldots,(-1)^m)$, then
\[
\left[\binom{1/2}{i + j}\right]_{i,j=1}^m = -D M_m D.
\]
Its determinant is therefore $(-1)^m\det M_m \neq 0$. Formula \eqref{eq:H.near.zero} shows that $\det H_m$ is not identically zero. Its real analyticity on $(0,1/4)$ follows directly from \eqref{eq:xi.c.definition}, because $\tfrac{1}{2} - \xi^{1/2} > 0$ on that interval.
\end{proof}

\subsection{A representation by a nonnegative measure}

Every entry of $H_m(\xi)$ involves a derivative $c^{(k)}(\xi)$ of order $k \geq 2$. The next lemma represents all these derivatives using one nonnegative measure, thereby converting quadratic forms associated with $H_m$ into integrals of polynomial squares. This representation will be used in the proof of Lemma~\ref{lem:H.sign.obstruction} to rule out the possibility that the matrices $-H_m$ are simultaneously positive definite for all $m\in \N$. It is obtained by extending $c$ to the upper half-plane as a Herglotz function and identifying its representing measure from its boundary values and poles.

\begin{lemma}[The representation by a nonnegative measure]\label{lem:c.derivative.measure}
Recall the definition of $c(\xi)$ from \eqref{eq:xi.c.definition}. Let $0 < \xi < 1/4$, and let $\delta_x$ denote the unit point mass at $x$. Define the nonnegative measure
\begin{equation}\label{eq:Herglotz.measure}
\rd\mu(t) \leqdef \ind_{\{t<0\}}\tanh(\pi(-t)^{1/2}) \, \rd t + \sum_{n=0}^{\infty}4(n + \tfrac{1}{2})\delta_{(n + \frac{1}{2})^2}(\rd t).
\end{equation}
Then, for every integer $k \geq 2$,
\begin{equation}\label{eq:c.derivative.measure}
\frac{c^{(k)}(\xi)}{k!} = \int_{\R}\frac{\rd\mu(t)}{(t - \xi)^{k + 1}}.
\end{equation}
\end{lemma}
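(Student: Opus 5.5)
The plan is to use the partial-fraction (Mittag-Leffler) expansion of the digamma function and then pass to a Stieltjes-type integral in the variable $\xi$. Recall
\[
\psi(a) = -\gamma + \sum_{n=0}^{\infty}\Big(\frac{1}{n+1} - \frac{1}{n+a}\Big), \qquad a \notin \{0,-1,-2,\ldots\}.
\]
Set $a = \tfrac12 - \xi^{1/2}$ and write $w \leqdef \xi^{1/2}$. Then
\[
c(\xi) = -2\sum_{n=0}^{\infty}\Big(\frac{1}{n+1} - \frac{1}{n+\frac12 - w}\Big).
\]
The term $1/(n+\tfrac12 - w)$ is not a function of $\xi$ alone. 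I will therefore symmetrize by pairing it with the companion $1/(n+\tfrac12+w)$, using
\[
\frac{1}{n+\frac12-w} = \frac{2(n+\frac12)}{(n+\frac12)^2-\xi} - \frac{1}{n+\frac12+w}.
\]
The first piece, $\sum_n 4(n+\tfrac12)/((n+\tfrac12)^2-\xi)$ (after the factor $2$ from $c$), differentiates $k$ times to give $k!\sum_n 4(n+\tfrac12)/((n+\tfrac12)^2-\xi)^{k+1}$. This is exactly the atomic part of \eqref{eq:c.derivative.measure}. Convergence for $k\ge 2$ holds since the terms are $O(n^{-2k-1})$, and term-by-term differentiation is justified by uniform convergence on compacts of $(0,1/4)$.

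The remaining piece is
\[
2\sum_n\Big(\frac{1}{n+1}-\frac{1}{n+\frac12+w}\Big) = 2\big(\psi(\tfrac12+w)+\gamma\big),
\]
with the sign to be tracked. It is a function of $\xi$ that is analytic away from $\xi\le 0$, because $\psi(\tfrac12+\sqrt{\xi})$ has a branch cut along the negative axis. I will represent it as a Stieltjes transform over $t<0$. Its jump across the cut at $t=-s^2$ is obtained by taking $\sqrt{t}=\pm i s$. The reflection-type identity
\[
\psi(\tfrac12+is)-\psi(\tfrac12-is) = i\pi\tanh(\pi s)
\]
then gives a density proportional to $\tanh(\pi(-t)^{1/2})$.

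Concretely, I expect
\[
c(\xi) = \text{const} + \text{linear}(\xi) + \int_{\R}\Big(\frac{1}{t-\xi} - \text{(subtraction terms)}\Big)\,\rd\mu(t).
\]
Subtraction terms are needed because $\tanh \to 1$ makes $\int \rd t/(t-\xi)$ diverge. They are affine in $\xi$, so they disappear after two derivatives. This is precisely why the claim is restricted to $k\ge2$. The cleanest route is to apply Stieltjes inversion, or equivalently the Herglotz/Nevanlinna representation, to $c$ extended to $\mathbb{C}\setminus(-\infty,0]\cup\{(n+\tfrac12)^2\}$. One checks that $\operatorname{Im} c(\xi) \ge 0$ for $\operatorname{Im}\xi>0$, so that $c$ is Herglotz. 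The measure is then read off from the boundary imaginary parts $\pi^{-1}\operatorname{Im} c(t+i0)$ on the cut and from the residues at the poles $(n+\tfrac12)^2$, where $\operatorname{Res} = -4(n+\tfrac12)$ gives positive point masses. The Nevanlinna formula
\[
c(\xi) = \alpha+\beta\xi+\int\Big(\frac{1}{t-\xi}-\frac{t}{1+t^2}\Big)\,\rd\mu(t)
\]
differentiated $k\ge2$ times gives \eqref{eq:c.derivative.measure} directly. Differentiation under the integral is fine since $|t-\xi|^{-k-1}$ is integrable against $\mu$ for $k\ge1$, because $\xi\in(0,1/4)$ is away from the support.

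I expect the main obstacle to be verifying the Herglotz property and the exact density. Two things must be controlled. The first is the precise constant (density $\tanh$ with coefficient $1$) via the jump computation. The second is the growth at infinity, needed so that no extra singular part or polynomial term of degree $\ge 2$ appears. For growth I would use $\psi(a)=\log a+O(1/a)$, which gives $c(\xi)=O(\log|\xi|)$ away from the real axis. That rules out higher polynomial terms and makes the representation with $\beta=0$ legitimate.

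Alternatively, I can bypass Herglotz theory. I would write the $\psi(\tfrac12+w)$ piece by the integral $\psi(\tfrac12+w) = \log w + \int_0^\infty(\dots)$, or directly verify that
\[
\int_{-\infty}^0 \tanh(\pi\sqrt{-t})\Big(\frac{1}{t-\xi}-\frac{1}{t}\Big)\,\rd t
\]
has the same second derivative. Both sides are analytic in $\xi$ on $\mathbb{C}\setminus(-\infty,0]$, and their $k$th derivatives are compared through the jump identity above.
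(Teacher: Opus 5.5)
Your main route is correct and is essentially the paper's proof. You show that $c(\xi)=-2\gamma-2\psi(\tfrac12-\xi^{1/2})$ is Herglotz on the upper half-plane, read off the density $\tanh(\pi(-t)^{1/2})$ on $(-\infty,0)$ from $\psi(\tfrac12+\ii s)-\psi(\tfrac12-\ii s)=\ii\pi\tanh(\pi s)$ and the masses $4(n+\tfrac12)$ from the residues $-4(n+\tfrac12)$ at $(n+\tfrac12)^2$, and then differentiate the Nevanlinna representation $k\ge 2$ times at $\xi\in(0,1/4)$, away from the support.

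A few small corrections:
\begin{itemize}
\item The growth bound at infinity is unnecessary. A Herglotz representation never contains polynomial terms of degree $\ge 2$, two derivatives kill $\alpha+\beta\xi$, and the subtraction term $-t/(1+t^2)$ is constant in $\xi$ rather than affine.
\item Singular components are excluded by local facts, not by growth. These are continuous boundary values on compact subintervals of $(-\infty,0)$, real-analytic continuation between consecutive points $(n+\tfrac12)^2$, and a check that $t=0$ carries no atom (e.g.\ $v\operatorname{Im}c(\ii v)\to 0$, since $c$ is bounded near $0$).
\item In your partial-fraction variant, subtract $4/(n+1)$ termwise so that the atomic series converges before you differentiate.
\end{itemize}
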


\begin{proof}[Proof of Lemma~\ref{lem:c.derivative.measure}]
A function that is analytic on the upper half-plane and has nonnegative imaginary part there is called a Herglotz function (or Pick function, or Nevanlinna function). The Herglotz representation theorem, in the form recorded by \citet[Thm.~2.2(iii), Eqs.~(2.14) and~(2.15)]{GesztesyTsekanovskii2000}, states that every such function $\Phi$ can be written as
\begin{equation}\label{eq:Herglotz.statement}
\Phi(w) = \alpha + \beta w + \int_{\R}\left\{\frac{1}{t - w} - \frac{t}{1 + t^2}\right\}\,\rd\mu(t), \qquad \operatorname{Im}(w) > 0,
\end{equation}
where $\alpha \in \R$, $\beta \geq 0$, and $\mu$ is a nonnegative Borel measure satisfying $\int_{\R}(1 + t^2)^{-1}\,\rd\mu(t) < \infty$. The Stieltjes inversion formula in \cite[Thm.~2.2(iv), Eq.~(2.16)]{GesztesyTsekanovskii2000} gives
\begin{equation}\label{eq:Stieltjes.inversion}
\mu((a,b)) = \lim_{v\downarrow 0}\frac{1}{\pi}\int_a^b\operatorname{Im}\Phi(t + \ii v) \, \rd t
\end{equation}
whenever $a$ and $b$ are not atoms. In particular, \citet{GesztesyTsekanovskii2000} identify two key limits from the boundary behavior:
\begin{enumerate}[label=(\roman*), itemsep=0pt, parsep=0pt, topsep=0pt]
    \item The density of the absolutely continuous part, almost everywhere on an interval where the boundary value exists continuously, as $\pi^{-1}\operatorname{Im}\Phi(t + \ii 0)$ \cite[Thm.~2.2(v), Eq.~(2.17)]{GesztesyTsekanovskii2000};
    \item The mass at $t^{\star}$ as $\lim_{v\downarrow 0}v\operatorname{Im}\Phi(t^{\star} + \ii v)$ \cite[Thm.~2.3(iv), Eq.~(2.23)]{GesztesyTsekanovskii2000}.
\end{enumerate}

\bigskip

Define
\begin{equation}\label{eq:Phi.definition}
\Phi(w) \leqdef -2\gamma - 2\psi\Big(\tfrac{1}{2} - w^{1/2}\Big), \qquad \operatorname{Im}(w) > 0,
\end{equation}
where the principal square root is used. Note that this is the same functional form as the function $h(\Delta)$ defined in \eqref{eq:h.definition}, evaluated at $\tfrac{1}{2} - w^{1/2}$. The principal square root is analytic on the upper half-plane. If $\operatorname{Im}(w) > 0$, then $\zeta = \tfrac{1}{2} - w^{1/2}$ has strictly negative imaginary part and therefore cannot be a pole of $\psi$. Thus, $\Phi$ is analytic on the upper half-plane. The convergent expansion in \cite[Eq.~5.7.6]{AskeyRoy2010},
\[
\psi(\zeta) = -\gamma + \sum_{n=0}^{\infty}\left\{\frac{1}{n + 1} - \frac{1}{n + \zeta}\right\},
\]
shows term by term that $\operatorname{Im}\psi(\zeta) < 0$. Hence, $\operatorname{Im}\Phi(w) > 0$, so $\Phi$ is a Herglotz function.

We now identify its representing measure. If $t = -r^2 < 0$ with $r > 0$, then $(t + \ii 0)^{1/2} = \ii r$. The reflection identity for the digamma function in \cite[Eq.~5.5.4]{AskeyRoy2010} gives
\[
\psi(\tfrac{1}{2} + \ii r) - \psi(\tfrac{1}{2} - \ii r)
= -\frac{\pi}{\tan(\pi(\frac{1}{2} + \ii r))}
= \frac{\pi}{\cot(\ii\pi r)}
= -\frac{\pi}{\ii\coth(\pi r)}
= \ii\pi\tanh(\pi r),
\]
while $\psi(\tfrac{1}{2} + \ii r) = \overline{\psi(\tfrac{1}{2} - \ii r)}$ by the Schwarz reflection principle \citep[p.~211]{Conway1978}. Consequently,
\[
\operatorname{Im}\Phi(t + \ii 0) = -2\operatorname{Im}\psi(\tfrac{1}{2} - \ii r)
= -\ii\{\psi(\tfrac{1}{2} + \ii r) - \psi(\tfrac{1}{2} - \ii r)\} = \pi\tanh(\pi(-t)^{1/2}).
\]
By $\mathrm{(i)}$, the absolutely continuous part of the representing measure of $\Phi$ on $(-\infty,0)$ has density $\tanh(\pi(-t)^{1/2})$.

It remains to rule out a singular component on $(-\infty,0)$. Fix $a < b < 0$. For all sufficiently small $v_0 > 0$, the principal square root and the digamma function in \eqref{eq:Phi.definition} extend continuously to the compact set $\{t + \ii v : a \leq t \leq b, 0 \leq v \leq v_0\}$. Therefore,
\[
\operatorname{Im}\Phi(t + \ii v) \longrightarrow \pi\tanh(\pi(-t)^{1/2})
\]
uniformly for $t \in [a,b]$ as $v \downarrow 0$. In particular, these boundary values are bounded, so $\mathrm{(ii)}$ shows that no point of $(-\infty,0)$ is an atom. Applying \eqref{eq:Stieltjes.inversion} and dominated convergence gives
\[
\mu((a,b)) = \int_a^b \tanh(\pi(-t)^{1/2}) \, \rd t.
\]
Since $a$ and $b$ were arbitrary, the representing measure of $\Phi$ has no singular component on $(-\infty,0)$.

Next, let $t_n = (n + \tfrac{1}{2})^2$. As $w \to t_n$,
\[
w^{1/2} = t_n^{1/2} + \frac{w - t_n}{2t_n^{1/2}} + O((w - t_n)^2).
\]
The digamma function has a simple pole of residue $-1$ at $-n$, as recorded in \cite[\S5.2(i)]{AskeyRoy2010}. Since
\[
\frac{1}{2} - w^{1/2} = -n - \frac{w - t_n}{2t_n^{1/2}} + O((w - t_n)^2),
\]
direct substitution in \eqref{eq:Phi.definition} yields
\[
\Phi(w) = -2\gamma - 2\left( \dfrac{-1}{- \dfrac{w - t_n}{2t_n^{1/2}} + O((w - t_n)^2)} + O(1) \right) = \frac{4t_n^{1/2}}{t_n - w} + O(1).
\]
Thus, by $\mathrm{(ii)}$, the representing measure of $\Phi$ has mass $4(n + \tfrac{1}{2})$ at $t_n$.

Now, the function $\Phi$ extends analytically, with real values, through every open interval in $(0,\infty)$ that contains none of the points $t_n$. On any such interval, $\operatorname{Im}\Phi(t + \ii 0) = 0$, so \eqref{eq:Stieltjes.inversion} rules out additional support there. Expanding $\psi(\tfrac{1}{2} - w^{1/2})$ about $1/2$ and using $\psi^{(1)}(1/2) = \pi^2/2$ gives
\[
\Phi(w) = -2\gamma - 2\psi(1/2) + \pi^2 w^{1/2} + O(w)
\]
as $w \to 0$ through the upper half-plane. Consequently, $v\operatorname{Im}\Phi(\ii v) = O(v^{3/2}) \to 0$ as $v \downarrow 0$, so the representing measure has no atom at zero. The density on $(-\infty,0)$, the masses $4(n + \tfrac{1}{2})$ at the points $t_n$, the absence of support on the remaining intervals in $(0,\infty)$, and the absence of an atom at zero together identify the representing measure as
\[
\rd\mu(t) = \ind_{\{t<0\}}\tanh(\pi(-t)^{1/2}) \, \rd t + \sum_{n=0}^{\infty}4(n + \tfrac{1}{2})\delta_{(n + \tfrac{1}{2})^2}(\rd t),
\]
which is \eqref{eq:Herglotz.measure}. This measure satisfies the integrability condition $\int_{\R}(1 + t^2)^{-1}\,\rd\mu(t) < \infty$ in the Herglotz representation: the continuous density is bounded, and the contribution of the point masses for large $n$ is bounded by a constant times $\sum_{n\geq1}n^{-3}$, as explicitly shown by
\[
\int_{\R}\frac{\rd\mu(t)}{1 + t^2}
= \int_{-\infty}^0 \frac{\tanh(\pi(-t)^{1/2})}{1 + t^2} \, \rd t + \sum_{n=0}^{\infty}\frac{4(n + \tfrac{1}{2})}{1 + (n + \tfrac{1}{2})^4}
\leq \int_{-\infty}^0 \frac{\rd t}{1 + t^2} + \sum_{n=0}^{\infty} \frac{4}{(n + \frac{1}{2})^3} < \infty.
\]

The integral on the right-hand side of \eqref{eq:Herglotz.statement} is analytic away from the support of $\mu$. In particular, both sides extend analytically through $(0,t_0) \equiv (0,1/4)$, where $\Phi(\xi) = c(\xi)$. For every $k \geq 2$, the affine term $\alpha + \beta\xi$ and the $\xi$-independent subtraction term $-t/(1 + t^2)$ in the representation \eqref{eq:Herglotz.statement} make no contribution, while $(\rd^k / \rd \xi^k) (t - \xi)^{-1} = k! (t - \xi)^{-k-1}$. Therefore, for every $k \geq 2$ and $0 < \xi < 1/4$,
\[
\frac{c^{(k)}(\xi)}{k!} = \int_{\R}\frac{\rd\mu(t)}{(t - \xi)^{k + 1}}.
\]
Differentiation under the integral is justified absolutely. For fixed $\xi \in (0,1/4)$, the support of $\mu$ stays a strictly positive distance from $\xi$. On the continuous part $(-\infty,0)$, the integrand is $O(|t|^{-k - 1})$ as $t \to -\infty$, and the sum of the absolute values over the point masses for large $n$ is $O(\sum_{n\geq1}n^{-2k - 1})$. This proves \eqref{eq:c.derivative.measure} and concludes the proof of the lemma.
\end{proof}

\subsection{Proof of Lemma~\ref{lem:H.sign.obstruction}}

For each $m \in \N$, Lemma~\ref{lem:H.nonzero} implies that the zeros of $\det H_m$ are discrete. The map $\Delta \mapsto (\tfrac{1}{2} - \Delta)^2$ is a strictly decreasing homeomorphism from $I$ onto its image, so the inverse images of these zero sets are also discrete, and hence countable, in $I$. The union of these inverse-image zero sets over all $m \in \N$ is countable, whereas $I$ is uncountable. Consequently, there is a $\Delta_0 \in I$ such that, with $\xi_0 \leqdef (\tfrac{1}{2} - \Delta_0)^2$,
\begin{equation}\label{eq:generic.point}
\det H_m(\xi_0) \neq 0, \qquad m \in \N.
\end{equation}

At the point $\xi_0$ chosen in \eqref{eq:generic.point}, every number $(-1)^m\det H_m(\xi_0)$ is nonzero. Suppose, for contradiction, that all these numbers are nonnegative. Then
\begin{equation}\label{eq:H.leading.minors}
(-1)^m\det H_m(\xi_0) > 0, \qquad m \in \N.
\end{equation}
For each fixed $M \in \N$, the $m$th leading principal minor of $-H_M(\xi_0)$ is $(-1)^m\det H_m(\xi_0)$ for $1 \leq m \leq M$. Therefore, Sylvester's criterion and \eqref{eq:H.leading.minors} imply that $-H_M(\xi_0)$ is positive definite for every $M$.

With $u = (t - \xi_0)^{-1}$, Lemma~\ref{lem:c.derivative.measure} gives, for every $\bb{v} = (v_1,\ldots,v_m) \in \R^m$,
\[
\bb{v}^{\mathsf T}H_m(\xi_0)\bb{v}
= \sum_{i,j=1}^m v_i v_j \frac{c^{(i + j)}(\xi_0)}{(i + j)!}
= \sum_{i,j=1}^m v_i v_j\int_{\R}u^{i + j + 1} \, \rd\mu(t)
= \int_{\R}u\left(\sum_{i=1}^m v_i u^i\right)^2 \, \rd\mu(t).
\]
Since $-H_m(\xi_0)$ is positive definite, the last integral is strictly negative whenever $\bb{v} \neq \bb{0}$. Let $R$ be any nonzero polynomial of degree $d$, take $m = d + 1$, and choose $v_1,\ldots,v_m$ to be the coefficients of the polynomial $uR(u) = \sum_{i=1}^m v_i u^i$. Then $\bb{v} \neq \bb{0}$, and the last integral becomes $\int_{\R}u^3R(u)^2\,\rd\mu(t)$. Consequently,
\begin{equation}\label{eq:signed.measure.negative}
\int R(u)^2\,\rd\sigma(u) < 0,
\end{equation}
where $\sigma$ is the push-forward under $t \mapsto (t - \xi_0)^{-1}$ of the signed measure $(t - \xi_0)^{-3}\,\rd\mu(t)$; explicitly,
\[
\int_{\R} f(u)\,\rd\sigma(u) \leqdef \int_{\R}\frac{1}{(t - \xi_0)^3}f\left(\frac{1}{t - \xi_0}\right)\,\rd\mu(t).
\]
Note that the signed measure $\sigma$ has finite total variation:
\[
\begin{aligned}
\int_{\R} \rd|\sigma|(u)
&= \int_{-\infty}^0 \frac{\tanh(\pi(-t)^{1/2})}{|t - \xi_0|^3} \, \rd t + \sum_{n=0}^{\infty} \frac{4(n + \tfrac{1}{2})}{|(n + \tfrac{1}{2})^2 - \xi_0|^3} \\
&\leq \int_{-\infty}^0 \frac{1}{|t - \xi_0|^3} \, \rd t + \frac{2}{(\frac{1}{4} - \xi_0)^3} + \sum_{n=1}^{\infty}\frac{4(n + \frac{1}{2})}{n^3(n + 1)^3} < \infty.
\end{aligned}
\]
Here, the last inequality uses the fact that $(n + \tfrac{1}{2})^2 - \xi_0 > n(n + 1)$ for all $n \geq 1$.

The support of $\sigma$ is contained in the compact set
\[
\left[-\frac{1}{\xi_0},0\right]\cup\left\{\frac{1}{(n + \frac{1}{2})^2 - \xi_0}:n \geq 0\right\}.
\]
The largest strictly positive point in the support of $\sigma$, $u_0 \leqdef \frac{1}{\frac{1}{4} - \xi_0}$, is isolated, and \eqref{eq:Herglotz.measure} gives
\[
\sigma(\{u_0\}) = 2u_0^3 > 0.
\]
Since $u_0$ is isolated from the rest of the support of $\sigma$, choose a continuous function on a compact interval containing the support that equals one at $u_0$ and vanishes on the rest of the support. By the Weierstrass approximation theorem, there are polynomials $R_j$ converging uniformly to this function. In particular, $R_j$ is nonzero for all sufficiently large $j$, because $R_j(u_0) \to 1$. Since $\sigma$ has finite total variation, the uniform convergence justifies interchanging the limit and the integral, yielding
\[
\lim_{j\to\infty}\int R_j(u)^2\,\rd\sigma(u) = \sigma(\{u_0\}) > 0,
\]
which contradicts \eqref{eq:signed.measure.negative}. Thus, \eqref{eq:H.leading.minors} cannot hold for every $m$. In view of \eqref{eq:generic.point}, there is an $m \in \N$ such that $(-1)^m\det H_m(\xi_0) < 0$, as required. This concludes the proof of Lemma~\ref{lem:H.sign.obstruction}.

\end{appendices}

\section*{Data availability statement}
\addcontentsline{toc}{section}{Data availability statement}

No data were used or generated in this study.

\section*{Statement of AI use}
\addcontentsline{toc}{section}{Statement of AI use}

ChatGPT 5.6 Sol assisted in the initial discovery of the mathematical arguments. The authors completely reworked and independently validated all proofs, and assume full responsibility for their accuracy and rigor.

\section*{Funding}
\addcontentsline{toc}{section}{Funding}

F.\ Ouimet is supported by the Natural Sciences and Engineering Research Council of Canada (Discovery Grant RGPIN-2026-04471, Discovery Launch Supplement DGECR-2026-00449).

\addcontentsline{toc}{section}{References}

\setlength{\bibsep}{0pt plus 0ex}


\end{document}